\newcommand\mathens[1]{\mathbb{#1}} %fonte des ensembles classiques
\newcommand{\Z}{\mathens{Z}}
\newcommand{\R}{\mathens{R}}
\newcommand{\id}{\mathrm{id}}
\newcommand{\nlmas}\upmu
\newcommand\tpsi{\widetilde{\psi}}
\DeclareMathOperator{\gr}{gr}
\DeclareMathOperator{\osc}{osc}
\DeclareMathOperator{\dis}{dis}
\DeclareMathOperator{\Cont}{Cont}
\DeclareMathOperator{\can}{can}
\DeclareMathOperator{\Ham}{Ham}
\DeclareMathOperator{\Crit}{Crit}
\DeclareMathOperator\ind{ind}
\DeclareFontFamily{U}{mathb}{\hyphenchar\font45}
\DeclareFontShape{U}{mathb}{m}{n}{
      <5> <6> <7> <8> <9> <10> gen * mathb
      <10.95> mathb10 <12> <14.4> <17.28> <20.74> <24.88> mathb12
}{}
\DeclareSymbolFont{mathb}{U}{mathb}{m}{n}
\DeclareMathSymbol{\cll}{3}{mathb}{"CE}
\newtheorem{thm}{Theorem}[section]
\newaliascnt{prop}{thm}
\newtheorem{prop}[prop]{Proposition}
\newaliascnt{defi}{thm}
\newaliascnt{ex}{thm}
\newtheorem{ex}[ex]{Example}
\newaliascnt{rmk}{thm}
\newtheorem{rmk}[rmk]{Remark}
\newaliascnt{lemma}{thm}
\newtheorem{lemma}[lemma]{Lemma}
\newaliascnt{cor}{thm}
\newtheorem{cor}[cor]{Corollary}
\newaliascnt{conjecture}{thm}
\newtheorem{prop-def}[thm]{Definition-proposition}
\theoremstyle{definition}
\theoremstyle{remark}
\let\@wraptoccontribs\wraptoccontribs\makeatother
\tikzset{middlearrow/.style={
        decoration={markings,
            mark= at position 0.5 with {\arrow{#1}} ,
        },
        postaction={decorate}
    }
}
\begin{document}

\title[]{Spectral selectors on lens spaces
and applications to the geometry of the group of contactomorphisms}

\author[S. Allais]{Simon Allais}
\address{S. Allais, Universit\'e de Strasbourg, IRMA UMR 7501, F-67000 Strasbourg, France}
\email{simon.allais@math.unistra.fr}
\urladdr{https://irma.math.unistra.fr/~allais/}

\author[P.-A. Arlove]{Pierre-Alexandre Arlove}
\address{P.-A. Arlove, Ruhr-Universit\"at Bochum, Fakult\"at f\"ur Mathematik, 
%\newline\indent Universit\"atsstrasse 150, 44780 Bochum, Germany
44780 Bochum, Germany}
\email{pierre-alexandre.arlove@ruhr-uni-bochum.de}

\author[S. Sandon]{Sheila Sandon}
\address{S. Sandon, Universit\'e de Strasbourg, CNRS, IRMA UMR 7501,
F-67000 Strasbourg, France}
\email{sandon@math.unistra.fr}

\maketitle

\begin{abstract}
\noindent
Using Givental's non-linear Maslov index
we define a sequence of spectral selectors
on the universal cover of the identity component
of the contactomorphism group
of any lens space.
As applications,
we prove for lens spaces with equal weights
that the standard Reeb flow is a geodesic
for the discriminant and oscillation norms,
and we define for general lens spaces
a stably unbounded conjugation invariant spectral pseudonorm.
\end{abstract}

%%%%%%%%%%%%%%%%%%%%%%%%%%%%%%%%%%%%%%%%%%%%%%%%%

\section{Introduction}

For any integer $k \geq 2$
and $n$-tuple $\underline{w} = (w_1, \cdots, w_n)$
of positive integers relatively prime to $k$,
the lens space $L_k^{2n-1} (\underline{w})$
is the quotient of the unit sphere $\mathbb{S}^{2n-1}$
in $\mathbb{R}^{2n} \equiv \mathbb{C}^n$
by the free $\mathbb{Z}_k$-action
generated by the map
$$
(z_1, \cdots, z_n) \mapsto
\big( e^{\frac{2 \pi i}{k} \cdot w_1} \, z_1, \cdots, e^{\frac{2 \pi i}{k} \cdot w_n} \, z_n \big) \,.
$$
We endow $L_k^{2n-1} (\underline{w})$
with its canonical contact structure $\xi_0$,
the kernel of the contact form $\alpha_0$ whose pullback $\bar{\alpha}_0$
by the projection $\mathbb{S}^{2n-1} \rightarrow L_k^{2n-1} (\underline{w})$
is equal to the pullback of $\sum_{j = 1}^n x_j dy_j - y_j dx_j$
by the inclusion $\mathbb{S}^{2n-1} \hookrightarrow \mathbb{R}^{2n}$.
We denote by $\widetilde{\Cont_0} \big(L_k^{2n-1} (\underline{w}), \xi_0 \big)$
the universal cover of the identity component
$\Cont_0 (L_k^{2n-1} \big(\underline{w}), \xi_0 \big)$
of the contactomorphism group.
The non-linear Maslov index
is a quasimorphism
$$
\mu: \widetilde{\Cont_0} \big(L_k^{2n-1} (\underline{w}), \xi_0 \big) \rightarrow \mathbb{Z} \,,
$$
defined by Givental \cite{Giv90}
for real projective spaces
and extended to general lens spaces in \cite{GKPS}.
Roughly speaking, it counts with multiplicity
the number of intersections of contact isotopies
with (a certain subspace of) the space of contactomorphisms
that have at least one discriminant point.

Recall that a point $p$
of a contact manifold $(M, \xi)$
is said to be a discriminant point of a contactomorphism $\phi$
if $\phi(p) = p$ and $(\phi^{\ast} \alpha)_p = \alpha_p$
for some (hence any) contact form $\alpha$ for $\xi$,
and is said to be a translated point of $\phi$
with respect to a contact form $\alpha$
if there exists a real number $T$
(in general not unique)
such that $p$ is a discriminant point of $r_{-T}^{\alpha} \circ \phi$,
where $\{r_t^{\alpha}\}$ denotes the Reeb flow;
such $T$ is then said to be a translation
of the translated point $p$.
Discriminant and translated points
play a key role in certain proofs of several global rigidity results
in contact topology,
related in particular to contact non-squeezing
\cite{San11, FSZ, AM},
orderability \cite{EP00, Bhu, San11, San11b, GKPS, AM},
and bi-invariant metrics on the contactomorphism group
\cite{San10, CS, PA}.
In particular,
Givental's non-linear Maslov index for projective spaces
has been used in \cite{EP00}, \cite{San13} and \cite{CS} respectively
to prove that real projective spaces are orderable,
satisfy a contact analogue of the Arnold conjecture
and have unbounded discriminant and oscillation norms.
All these results have then been generalized to lens spaces in \cite{GKPS}
(recovering for orderability a result also obtained in \cite{Milin}
and \cite{San11b}).
In the original article of Givental \cite{Giv90},
the non-linear Maslov index on projective spaces
and a Legendrian version of it
have been applied in particular
to prove the Weinstein and chord conjectures,
and a result on existence of Reeb chords
between Legendrian submanifolds
Legendrian isotopic to each other.
Moreover, an analogue of the non-linear Maslov index for complex projective spaces
has been used by Givental \cite{Giv90} and Th\'eret \cite{Theret-Rotation}
to prove the Arnold conjectures on fixed points of Hamiltonian symplectomorphisms
and Lagrangian intersections.

In the present article we use the non-linear Maslov index
to define spectral selectors on the universal cover
of the identity component of the contactomorphism group of lens spaces,
i.e.\ maps
$$
c_j : \widetilde{\Cont_0} \big(L_k^{2n-1} (\underline{w}), \xi_0 \big) \to \mathbb{R}
$$
that associate to every element of $ \widetilde{\Cont_0} \big(L_k^{2n-1} (\underline{w}), \xi_0 \big)$
a real number belonging to its action spectrum.
Recall that the action spectrum
of a contactomorphism $\phi$ of a contact manifold $(M, \xi)$
with respect to a contact form $\alpha$
is the set $\mathcal{A}_{\alpha} (\phi)$ of real numbers
that are translations of translated points of $\phi$
with respect to $\alpha$.
We denote by
$$
\Pi: \widetilde{\Cont_0} (M, \xi)
\rightarrow \Cont_0 (M, \xi)
$$
the standard projection,
which sends an element
$\widetilde{\phi} = [ \{ \phi_t \}_{t \in [0, 1]} ]$
of $\widetilde{\Cont_0} (M, \xi)$ to $\phi_1$,
and define the action spectrum
of an element $\widetilde{\phi}$ of $\widetilde{\Cont_0} (M, \xi)$
by $\mathcal{A}_{\alpha} (\widetilde{\phi}) = \mathcal{A}_{\alpha} \big( \Pi (\widetilde{\phi}) \big)$.
Let
$$
\mathcal{L}: \widetilde{\Cont_0} \big(L_k^{2n-1} (\underline{w}), \xi_0 \big)
\rightarrow \widetilde{\Cont_0} \big(\mathbb{S}^{2n-1}, \bar{\xi}_0 = \ker (\bar{\alpha}_0)\big)
$$
be the map that sends $\widetilde{\phi} = [ \{ \phi_t \}_{t \in [0, 1]} ]$
to the element of $\widetilde{\Cont_0} (\mathbb{S}^{2n-1}, \bar{\xi}_0)$
represented by the lift of $\{ \phi_t \}_{t \in [0, 1]}$ to $(\mathbb{S}^{2n-1}, \bar{\xi}_0)$.
For $\widetilde{\phi} \in \widetilde{\Cont_0} \big( L_k^{2n-1} (\underline{w}), \xi_0 \big)$
we denote
$$
\mathcal{A} (\widetilde{\phi}) = \mathcal{A}_{\alpha_0} (\widetilde{\phi})
$$
and
$$
\bar{\mathcal{A}} (\widetilde{\phi})
= \mathcal{A}_{\bar{\alpha}_0} \big( \mathcal{L}(\widetilde{\phi}) \big)
\subset \mathcal{A} (\widetilde{\phi}) \,.
$$
The sets $\mathcal{A} (\widetilde{\phi})$ and $\bar{\mathcal{A}} (\widetilde{\phi})$
are invariant by translation by $T_{\underline{w}}$ and $2 \pi$ respectively,
where $T_{\underline{w}}$ denotes the period of the Reeb flow
of $\alpha_0$ on $L_k^{2n-1} (\underline{w})$.
For a real number $T$ we denote
$$
\big\lceil T \big\rceil_{T_{\underline{w}}} = T_{\underline{w}} \, \left\lceil \, \frac{T}{T_{\underline{w}}} \, \right\rceil
\quad \text{ and } \quad
\big\lfloor T \big\rfloor_{T_{\underline{w}}} = T_{\underline{w}} \, \left\lfloor \, \frac{T}{T_{\underline{w}}} \, \right\rfloor \,,
$$
thus $\lceil T \rceil_{T_{\underline{w}}}$ and $\lfloor T \rfloor_{T_{\underline{w}}}$
are respectively the smallest multiple of $T_{\underline{w}}$
greater or equal than $T$
and the greatest multiple of $T_{\underline{w}}$
smaller or equal than $T$.

Before stating our main result
we recall that, since $\big( L_k^{2n-1} (\underline{w}), \xi_0 \big)$ is orderable,
the relation $\leq$ on $\widetilde{\Cont_0} \big( L_k^{2n-1} (\underline{w}), \xi_0 \big)$
defined by posing $\widetilde{\phi} \leq \widetilde{\psi}$
if there is a non-negative contact isotopy representing
$\widetilde{\psi} \,\cdot\, \widetilde{\phi}^{-1}$
is a bi-invariant partial order;
this is the partial order
that appears in point (\ref{monotonicity}) below.
Recall also that a translated point $p$
of a contactomorphism $\phi$ of a contact manifold $(M, \xi)$
with respect to a contact form $\alpha$
is said to be non-degenerate for a translation $T$
if there is no vector $X \in T_pM \smallsetminus \{0\}$
such that $(r^{\alpha}_{-T} \circ \phi)_{\ast} (X) = X$
and $dg (X) = 0$,
where $g$ is the conformal factor of $\phi$,
i.e.\ the function defined by the relation
$\phi^{\ast} \alpha = e^g \alpha$.
In the case of $(\mathbb{S}^{2n-1}, \bar{\xi}_0)$,
if a translated point of a contactomorphism
with respect to $\bar{\alpha}_0$
is non-degenerate for a certain translation
then it is non-degenerate for all the translations;
we then just say that it is non-degenerate.
For any $T \in \mathbb{R}$
we denote
$$
\widetilde{r_T} = [ \{r_{Tt}\}_{t \in [0, 1]} ] \,,
$$
where $\{r_t\}$ is the Reeb flow on $L_k^{2n-1} (\underline{w})$
of $\alpha_0$.
Moreover,
we denote by $\widetilde{\id}$
the identity on $\widetilde{\Cont_0} \big( L_k^{2n-1} (\underline{w}), \xi_0 \big)$.

Our main result is the following theorem.

\begin{thm}[Spectral selectors]\label{theorem: main}
There exists a non-decreasing sequence of maps
$$
\big\{\, c_j : \widetilde{\Cont_0} \big( L_k^{2n-1} (\underline{w}), \xi_0 \big) \to \mathbb{R} \,,\; j \in \mathbb{Z} \,\big\}
$$
satisfying the following properties:
\renewcommand{\theenumi}{\roman{enumi}}
\begin{enumerate}

\item \label{spectrality} \emph{Spectrality}:
$$
c_j (\widetilde{\phi}) \in \bar{\mathcal{A}} (\widetilde{\phi}) \,.
$$

\item \label{normalization} \emph{Normalization}:
$$
c_{0} (\widetilde{\id}) = 0 \,.
$$

\item \label{infinite de points translates} \emph{Relation with translated points}:
if all the translated points of $\Pi \big(\mathcal{L}(\widetilde{\phi})\big)$
with respect to $\bar{\alpha}_0$
are non-degenerate
then the spectral selectors
$\{\, c_j (\widetilde{\phi}) \,, j \in \mathbb{Z} \,\}$
are all distinct.
On the other hand,
if
$$
c_{j - 1} (\widetilde{\phi}) < c_{j} (\widetilde{\phi}) = c_{j + 1} (\widetilde{\phi}) = \cdots = c_{j + m} (\widetilde{\phi})
= T < c_{j + m + 1} (\widetilde{\phi})
$$
for some $j$ and $1 \leq m \leq 2n-1$
and either $k$ is even or $j$ is odd or $m > 1$
then $\Pi \big(\mathcal{L}(\widetilde{\phi})\big)$
has infinitely many translated points of translation $T$
with respect to $\bar{\alpha}_0$.

\item \label{non-degeneracy} \emph{Non-degeneracy}:
if
$$
c_{-2n + 1} (\widetilde{\phi}) = c_0 (\widetilde{\phi}) = 0
$$
then
$\Pi \big( \mathcal{L} (\widetilde{\phi}) \big)$ is the identity.

\item \label{composition with the Reeb flow main} \emph{Composition with the Reeb flow}:
for every $T \in \mathbb{R}$ we have
$$
c_j (\widetilde{r_T} \cdot \widetilde{\phi})
=  c_j (\widetilde{\phi}) + T \,;
$$
in particular,
$c_0 ( \widetilde{r_T} ) = T $.

\item \label{periodicity} \emph{Periodicity}:
$$
c_{j + 2n} (\widetilde{\phi}) = c_j (\widetilde{\phi}) + 2 \pi \,.
$$

\item \label{monotonicity} \emph{Monotonicity}:
if $\widetilde{\phi} \leq \widetilde{\psi}$ then $c_j (\widetilde{\phi}) \leq c_j (\widetilde{\psi})$. 

\item \label{continuity} \emph{Continuity}:
if $\widetilde{\phi} \cdot \widetilde{\psi}^{-1}$ is represented by a contact isotopy
with Hamiltonian function $H_t: L_k^{2n-1} (\underline{w}) \rightarrow \mathbb{R}$
with respect to $\alpha_0$
then
\begin{equation*}
\int_0^1 \min H_t \, dt
\leq c_j (\widetilde{\phi}) - c_j (\widetilde{\psi})
\leq \int_0^1 \max H_t \, dt \,.
\end{equation*}
Moreover,
each $c_j$ is continuous
with respect to the $\mathcal{C}^1$-topology.

\item \label{triangle inequality} \emph{Triangle inequality}:
if either $k$ is even or $j$ is even then
\[
c_{j + l} \big(\widetilde{\phi} \cdot \widetilde{\psi}\big)
\leq c_j \big(\widetilde{\phi}\big) + \left\lceil c_l \big(\widetilde{\psi}\big) \right\rceil_{T_{\underline{w}}} \,,
\]
in particular
$$
\left\lceil c_{j + l} \big( \widetilde{\phi} \cdot \widetilde{\psi} \big) \right\rceil_{T_{\underline{w}}}
\leq \left\lceil c_j \big( \widetilde{\phi} \big) \right\rceil_{T_{\underline{w}}}
+ \left\lceil c_l \big( \widetilde{\psi} \big) \right\rceil_{T_{\underline{w}}} \,.
$$

\item \label{conjugaison invariance} \emph{Conjugation invariance}:
$$
\left\lceil c_j \big(\widetilde{\psi} \cdot \widetilde{\phi} \cdot \widetilde{\psi}^{-1} \big) \right\rceil_{T_{\underline{w}}}
= \left\lceil c_j \big( \widetilde{\phi} \big) \right\rceil_{T_{\underline{w}}} \,.
$$

\item \label{PD} \emph{Poincar\'e duality}:
$$
\left\lceil c_j \big(\widetilde{\phi} \big) \right\rceil_{T_{\underline{w}}}
= - \left\lfloor c_{- j - (2n-1)} \big(\widetilde{\phi}^{-1} \big) \right\rfloor_{T_{\underline{w}}} \,.
$$

\end{enumerate}

\end{thm}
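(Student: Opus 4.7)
The plan is to construct the $c_j$ as $\mathbb{Z}_k$-equivariant min-max spectral invariants associated to the generating families on $\mathbb{S}^{2n-1}$ that underlie Givental's construction of the non-linear Maslov index $\mu$ in \cite{Giv90, GKPS}. Concretely, I would represent $\mathcal{L}(\tilde\phi)$ by a $\mathbb{Z}_k$-equivariant generating function $F = F_{\tilde\phi}$ on a finite-dimensional $\mathbb{Z}_k$-approximation $E$ of the based loop space of $\mathbb{S}^{2n-1}$, normalised so that its critical values coincide with $\bar{\mathcal{A}}(\tilde\phi)$ modulo the $2\pi\mathbb{Z}$-shift from the Reeb lift. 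The filtered $\mathbb{Z}_k$-equivariant cohomology of $F$ is naturally a module over $H^{*}_{\mathbb{Z}_k}(\mathrm{pt})$, and an appropriate sequence of classes $\{u_j\}_{j\in\mathbb{Z}}$ — chosen so that $u_{j+2n}$ is obtained from $u_j$ by cap product with a class corresponding geometrically to one full Reeb rotation — produces the candidate selectors
\begin{equation*}
c_j(\tilde\phi) := \inf\bigl\{\, a \in \mathbb{R} \,:\, u_j \in \mathrm{im}\bigl(H_*^{\mathbb{Z}_k}(\{F \leq a\}) \to H_*^{\mathbb{Z}_k}(E)\bigr) \,\bigr\}.
\end{equation*}

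Having set up the construction, I would verify the properties roughly in the listed order. \emph{Spectrality} and \emph{normalisation} come from the standard min-max critical-value lemma, taking $F_{\widetilde{\id}}$ to have critical value $0$ on $u_0$. \emph{Composition with the Reeb flow} reduces to the fact that $F_{\widetilde{r_T}\cdot\tilde\phi}$ equals $F_{\tilde\phi}$ shifted by the constant $T$, and \emph{periodicity} then follows from the definition of the $u_{j+2n}$. \emph{Monotonicity} and \emph{continuity} translate the pointwise domination of generating functions by a non-negative, respectively Hamiltonian, isotopy, and the $\mathcal{C}^1$-continuity is the standard persistence of min-max values under small perturbations. The two statements relating spectral selectors to \emph{translated points} are the Morse-theoretic heart of the theorem: coincident values at a level $T$ force several independent classes $u_j,\ldots,u_{j+m}$ to be supported in a single critical set, giving either that many geometric critical orbits when non-degenerate, or infinitely many when the local equivariant cohomology is forced to be large. \emph{Triangle inequality} and \emph{conjugation invariance} come from comparing $F_{\tilde\phi\cdot\tilde\psi}$ with the direct sum $F_{\tilde\phi}\oplus F_{\tilde\psi}$ pulled back along the composition map of generating-function spaces, and noting that descent to the lens space only commutes with composition modulo $T_{\underline{w}}$, whence the ceiling. \emph{Poincar\'e duality} is the duality $F_{\tilde\phi^{-1}}\leftrightarrow -F_{\tilde\phi}$ combined with equivariant Poincar\'e duality of formal dimension $2n-1$.

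The main obstacle will be the simultaneous bookkeeping of the parity assumption ``$k$ even or $j$ odd or $m>1$'' in the relation with translated points and the parity/ceiling subtleties in the triangle inequality. For odd $k$, the $\mathbb{Z}_k$-torsion in $H^{*}_{\mathbb{Z}_k}(\mathrm{pt})$ causes some consecutive $u_j$ to live in the same torsion class and thus fail to separate geometrically distinct critical orbits without that extra hypothesis, while on the composition side the lift to $\mathbb{S}^{2n-1}$ only descends to the lens space after modding out the Reeb period. Both obstructions were confronted in unfiltered form in the construction of $\mu$ in \cite{GKPS}, and the technical work will be to refine those equivariant Morse and composition arguments to the filtered $c_j$-setting while preserving estimates sharp enough for the geometric applications on geodesics of the discriminant and oscillation norms and on stably unbounded conjugation-invariant pseudonorms advertised in the abstract.
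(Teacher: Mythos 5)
Your plan is a direct min--max construction from equivariant generating families, whereas the paper takes a deliberately indirect route: it first assembles a toolbox of properties of the already-existing non-linear Maslov index $\mu$ of \cite{GKPS} (triangle inequality, composition with $\widetilde{r_{2\pi m}}$, lower semicontinuity of $T \mapsto \mu(\widetilde{r_{-T}}\widetilde{\phi})$, monotonicity, a Poincar\'e duality $\mu(\widetilde{\phi}) + \mu(\widetilde{\phi}^{-1}) = 2n$, and refined ``relation with discriminant points'' statements --- Propositions~\ref{proposition: triangle inequality}--\ref{proposition: PD Maslov} in the text), and then simply \emph{defines} $c_j(\widetilde{\phi}) = \inf\{T \in \mathbb{R} : \mu(\widetilde{r_{-T}}\cdot\widetilde{\phi}) \leq -j\}$. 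All eleven properties then follow by formal bookkeeping from the listed properties of $\mu$, without ever re-opening the generating-function machinery. The two routes are morally equivalent --- $\mu$ is itself the drop in cohomological index of sublevel sets, and thresholding $\mu$ against the Reeb shift is the same thing as a min--max value for the monomial classes in $H^{*}_{\mathbb{Z}_k}(\mathrm{pt})$ --- but the paper's version is far more economical: the reduction to $\mu$ makes composition-with-Reeb-flow, periodicity, monotonicity, and normalization essentially tautological, and it localizes all the hard Morse/cohomological work into a handful of lemmas about $\mu$ that can be proven once.

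One concrete issue you should resolve before your version goes through. Conical generating functions (which is what the $\mathbb{Z}_k$-equivariant setup of \cite{Giv90, GKPS} produces) are homogeneous of degree~$2$, so every critical value is $0$ by Euler's relation; a single $F_{\widetilde{\phi}}$ carries \emph{no} non-trivial filtration by critical values. Your phrase ``its critical values coincide with $\bar{\mathcal{A}}(\widetilde{\phi})$'' and ``the filtered $\mathbb{Z}_k$-equivariant cohomology of $F$'' therefore do not literally make sense in this framework. The filtration has to come from elsewhere: one must consider the one-parameter family $F_{\widetilde{r_{-T}}\cdot\widetilde{\phi}}$ as $T$ varies and record the $T$ at which a given monomial class first dies in the sublevel set $\{f_{T} \leq 0\}$. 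This is exactly the Reeb-thresholding that the paper's formula $\inf\{T : \mu(\widetilde{r_{-T}}\widetilde{\phi}) \leq -j\}$ encodes, and it is also what \cite[Lemma~\ref{lemma: closed}]{} implicitly uses to identify spectral values with critical values of the auxiliary projection $p : f^{-1}(0) \to [\tfrac12,1]$. Once you incorporate this $T$-parametrization your construction should agree with the paper's, but as written the min--max object is not well posed.
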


Using the Hamiltonian version of the non-linear Maslov index
for complex projective spaces
(\cite{Giv90, Theret-Rotation, Car13})
it is possible to define also spectral invariants
$$
\big\{\, c_j : \widetilde{\Ham} (\mathbb{CP}^{n}, \omega_0) \to \mathbb{R}
\,,\; j \in \mathbb{Z} \,\big\}
$$
satisfying properties analogue to those
of \autoref{theorem: main},
with stronger statements for (\ref{triangle inequality}),
(\ref{conjugaison invariance}) and (\ref{PD})
not involving the $T_{\underline{w}}$-floors and ceilings.
Such spectral invariants coincide with the ones
defined by the first author in \cite{OnHoferZehnderGF},
and their projections to $\mathbb{S}^1$
coincide with the rotation numbers
defined by Th\'eret in \cite{Theret-Rotation}.
Moreover,
their properties are analogue to those
satisfied by the spectral invariants
defined with Floer homology
by Entov and Polterovich in \cite{EP-Calabiqm}.
The fact that in the contact case
the statements of the triangle inequality, conjugation invariance
and Poincar\'e duality properties
are weaker than in the symplectic case
and involve the $T_{\underline{w}}$-floors and ceilings
is similar to what happens for the spectral selectors
of compactly supported contactomorphisms
of $( \mathbb{R}^{2n} \times \mathbb{S}^1, \xi_0 )$
defined by the third author in \cite{San11}.
Indeed,
these spectral selectors are contact analogues
of the spectral selectors of compactly supported
Hamiltonian symplectomorphisms of $( \mathbb{R}^{2n}, \omega_0 )$
defined by Viterbo in \cite{Vit},
but they satisfy weaker versions of the triangle inequality,
conjugation invariance and Poincar\'e duality properties
involving their (integral) floors and ceilings.
Roughly speaking,
this difference with respect to the symplectic case
can be explained as follows.
If we see $( \mathbb{R}^{2n} \times \mathbb{S}^1, \xi_0 )$
and $\big( L_k^{2n-1} (1, \cdots, 1), \xi_0 \big)$
as prequantizations of $( \mathbb{R}^{2n}, \omega_0 )$
and $(\mathbb{CP}^{n-1}, \omega_0)$ respectively
then in both cases the contact spectral selectors
are generalizations of the symplectic ones,
in the sense that the symplectic spectral selectors
of Hamiltonian isotopies of $( \mathbb{R}^{2n}, \omega_0 )$
and $(\mathbb{CP}^{n-1}, \omega_0)$
coincide with the contact spectral selectors of their lifts
to  $( \mathbb{R}^{2n} \times \mathbb{S}^1, \xi_0 )$
and $\big( L_k^{2n-1} (1, \cdots, 1), \xi_0 \big)$ respectively.
The fact that the contact spectral selectors satisfy weaker versions of the triangle inequality,
conjugation invariance and Poincaré duality properties
involving their floors and ceilings
with respect to the period of the Reeb flow
is due to the fact that,
while the lifts of Hamiltonian isotopies of $( \mathbb{R}^{2n}, \omega_0 )$
and $(\mathbb{CP}^{n-1}, \omega_0)$ 
are exactly the contact isotopies
that commute with the standard Reeb flows,
general contact isotopies
commute with the Reeb flow at time $t$
only when $t$ is a multiple of the period of the Reeb flow.
For conjugation invariance, for instance,
while in the symplectic case
the action spectrum is invariant by conjugation,
in the contact case this is in general not true:
the translated points of a contactomorphism
are in general not in bijection
with those of a conjugation.
However,
if the Reeb flow is periodic
then the translated points of translation
equal to the period of the Reeb flow
are discriminant points,
which are invariant by conjugation,
and this fact can be used to prove
that the corresponding floor and ceiling of the spectral selectors
are invariant by conjugation
(see also the discussion in \cite{San11}).

In \cite{AA} the first and second authors
have defined invariants $c_+$ and $c_-$
for elements of the universal cover
of any closed orderable contact manifold
and for contactomorphisms of any closed contact manifold
with orderable contactomorphism group.
In the universal cover case,
these invariants satisfy
all the properties in \autoref{theorem: main}
(including conjugation invariance
if the Reeb flow is periodic,
and with stronger versions for the triangle inequality
and Poincar\'e duality properties
not involving floors and ceilings)
except for periodicity
(there are only two invariants $c_+$ and $c_-$,
while we have a sequence $c_j$
related by periodicity),
spectrality and (\ref{infinite de points translates}).
These properties are important for us
to obtain the applications discussed below.
In particular,
spectrality is crucial to obtain \autoref{corollary: Reeb geodesic}
and the relation between the pseudonorm $\nu$
of \autoref{corollary: word norms} and the oscillation norm,
while periodicity is used in \autoref{corollary: word norms}
to show that the induced norm $\nu_{\ast}$ is bounded
(see also Remarks \ref{remark: quasimorphisms}
and \ref{remark: contact Arnold conjecture}
below for two more consequences of these properties).
The first and second authors also defined in \cite{AA}
invariants for Legendrian submanifolds
and Legendrian isotopies
(when the involved spaces are orderable)
that do satisfy a spectrality property.
Using the Legendrian version of the non-linear Maslov index
defined in \cite{Giv90}
it should be possible to obtain
also a Legendrian version of our spectral selectors,
with properties similar to those in \autoref{theorem: main}.
However,
as far as we can see,
the only new application of these spectral selectors
with respect to those in \cite{AA}
would be a better lower bound for the number of Reeb chords
between Legendrian submanifolds
Legendrian isotopic to each other,
but (at least in the case of real projective space)
such bound is already given by Givental in \cite{Giv90}
just using the non-linear Maslov index.

\begin{rmk}\label{remark: quasimorphisms}
Properties (\ref{periodicity}), (\ref{triangle inequality}) and (\ref{PD})
imply that each $c_j$ is a quasimorphism.
\end{rmk}

\begin{rmk}\label{remark: contact Arnold conjecture}
Properties (\ref{spectrality}), (\ref{infinite de points translates})
and (\ref{periodicity})
imply that every contactomorphism of $\big( L_k^{2n-1} (\underline{w}), \xi_0 \big)$
contact isotopic to the identity
has at least $n$ translated points
with respect to $\alpha_0$,
and at least $2n$ if either $k$ is even
or all the translated points
are non-degenerate.
We thus recover the corresponding result of \cite{GKPS},
but not the optimal bound obtained by the first author in \cite{lens},
where it is proved
that every contactomorphism of $\big( L_k^{2n-1} (\underline{w}), \xi_0 \big)$
contact isotopic to the identity
has at least $2n$ translated points
with respect to $\alpha_0$.
\end{rmk}

\begin{rmk}\label{remark: stronger relation with translated points k prime}
Suppose that $k$ is prime.
Recall that the cohomological index $\ind (A)$ 
of a subset $A$ of $L_k^{2n-1} (\underline{w})$
is the dimension over $\mathbb{Z}_k$
of the image of the map
$\check{H}^{\ast} \big( L_k^{2n-1} (\underline{w}); \mathbb{Z}_k \big)
\rightarrow \check{H}^{\ast} ( A; \mathbb{Z}_k )$
on \v{C}ech cohomology induced by the inclusion
$A \hookrightarrow L_k^{2n-1} (\underline{w})$.
As we will see,
property (\ref{infinite de points translates})
can be refined in this case as follows:
if 
$$
c_{j - 1} (\widetilde{\phi}) < c_{j} (\widetilde{\phi}) = c_{j + 1} (\widetilde{\phi}) = \cdots = c_{j + m} (\widetilde{\phi})
= T < c_{j + m + 1} (\widetilde{\phi})
$$
for some $j$ and $1 \leq m \leq 2n-1$
then the set of translated points of translation $T$ of $\Pi (\widetilde{\phi})$
has cohomological index greater or equal than $m$,
and greater or equal than $m + 1$
if either $k = 2$ or $j$ is odd.
\end{rmk}

As a first application of Theorem \ref{theorem: main}
we prove that the standard Reeb flow on lens spaces with equal weights
is a geodesic for the discriminant and oscillation norms
introduced in \cite{CS}.
The definition of the discriminant norm $\nu_{\dis}$
and of the oscillation pseudonorm $\nu_{\osc}$
on the universal cover of the identity component
of the contactomorphism group
of a closed contact manifold $(M, \xi)$
are recalled in \autoref{section: Reeb geodesic} below,
as well as the definition of the discriminant and oscillation lengths
of contact isotopies.
Recall also from \cite[Proposition 3.2]{CS}
that the oscillation pseudonorm
is non-degenerate if and only if $(M, \xi)$ is orderable;
in particular,
it is thus a norm for lens spaces.
As in \cite{PA},
we say that a contact isotopy
of a closed orderable contact manifold
is a geodesic for the discriminant or for the oscillation norm
if its discriminant or oscillation length
is equal to the discriminant or oscillation norm
of the element of the universal cover
it represents.
In other words,
a contact isotopy is a geodesic
for the discriminant or oscillation norm
if it minimizes the discriminant or oscillation length
in its homotopy class with fixed endpoints.
In \cite{PA} it is proved that certain contact isotopies
of $(\mathbb{R}^{2n} \times \mathbb{S}^1, \xi_0)$
are geodesics for the discriminant and oscillation norms.
We obtain here a similar result for lens spaces with equal weights,
answering a question in \cite{CS}.
In \cite{CS} and \cite{GKPS} respectively
it is proved that the discriminant and oscillation norms
on real projective spaces and on general lens spaces
are unbounded,
by showing that the classes in the universal cover
represented by higher iterations of the Reeb flow
have bigger and bigger discriminant and oscillation norms.
More precisely,
it is proved in \cite{GKPS} that for every $N$
the discriminant and oscillation norms
on $\widetilde{\Cont_0} \big(L_k^{2n-1} (\underline{w}), \xi_0 \big)$
of $\widetilde{r_{6 \pi N}}$
and $\widetilde{r_{20 \pi N}}$ respectively
are at least equal to $N + 1$.
Since the discriminant length of $\{ r_{6 \pi N t} \}_{t \in [0, 1]}$ is $3Nk + 1$
and the oscillation length of $\{ r_{20 \pi N t} \}_{t \in [0, 1]}$ is $10 N k + 1$,
the results in \cite{GKPS}
(as well as the previous ones in \cite{CS})
left open the question
of whether there exist contact isotopies
in the same homotopy class with fixed endpoints
as $\{ r_{6 \pi N t} \}_{t \in [0, 1]}$ or $\{ r_{20 \pi N t} \}_{t \in [0, 1]}$
having shorter discriminant or oscillation lengths.
Note that this is what happens for the sphere $(\mathbb{S}^{2n-1}, \bar{\xi}_0)$:
the $N$-th iteration $\{ r_{2 \pi N t} \}_{t \in [0, 1]}$
of the Reeb flow $\{ r_{2 \pi t} \}_{t \in [0, 1]}$ of $\bar{\alpha}_0$
has discriminant and oscillation length $N + 1$,
but by \cite[Proposition 4.3]{CS}
the discriminant norm and the oscillation pseudonorm
of $[\{ r_{2 \pi N t} \}_{t \in [0, 1]}]$ are smaller or equal than $4$;
in other words,
there exist contact isotopies of $(\mathbb{S}^{2n-1}, \bar{\xi}_0)$
in the same homotopy class with fixed endpoints
as certain iterations the Reeb flow
having strictly shorter discriminant and oscillation length.
As an application of \autoref{theorem: main},
in Section \ref{section: Reeb geodesic}
we show that for lens spaces with equal weights this is not possible.
More precisely,
we show that Theorem \ref{theorem: main}
implies the following result.

\begin{cor}[Non-shortening of the standard Reeb flow]\label{corollary: Reeb geodesic}
For every real number $T$,
the Reeb flow $\{ r_{Tt} \}_{t \in [0, 1]}$
of the standard contact form $\alpha_0$
on a lens space of the form $L_k^{2n-1} (w, \cdots, w)$
is a geodesic for the discriminant and oscillation norms.
In particular,
$$
\nu_{\dis} (\widetilde{r_T}) = \nu_{\osc} (\widetilde{r_T})
= \left\lfloor \frac{k}{2\pi} \, T \right\rfloor + 1 \,.
$$
\end{cor}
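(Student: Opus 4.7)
The plan is to establish matching upper and lower bounds $\nu_{\dis}(\widetilde{r_T}) = \nu_{\osc}(\widetilde{r_T}) = \lfloor T/T_{\underline{w}} \rfloor + 1$ for $T > 0$, observing that with equal weights the Reeb period on $L_k^{2n-1}(\underline{w})$ is $T_{\underline{w}} = 2\pi/k$, so this matches the claimed value $\lfloor kT/(2\pi)\rfloor + 1$. The case $T < 0$ reduces to $T > 0$ via $\widetilde{r_T}^{-1} = \widetilde{r_{-T}}$ and the invariance of both norms under inversion, and $T = 0$ is trivial. The crucial feature of the equal-weights hypothesis is that on $\mathbb{S}^{2n-1}$ the time-$T_{\underline{w}}$ Reeb map $\bar{r}_{T_{\underline{w}}}$ coincides with a generator of the deck group $\mathbb{Z}_k$, so discriminant points of $\phi$ on $L_k^{2n-1}(\underline{w})$ correspond bijectively to translated points of the lift $\Pi(\mathcal{L}(\widetilde{\phi}))$ on $\mathbb{S}^{2n-1}$ with translation in $T_{\underline{w}}\mathbb{Z}$.

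For the upper bound I split $\{r_{Tt}\}_{t \in [0,1]}$ into $\ell := \lfloor T/T_{\underline{w}} \rfloor + 1$ equal pieces, each a reparametrized Reeb sub-flow of duration $T/\ell < T_{\underline{w}}$. Since $r_s$ has no fixed points on $L_k^{2n-1}(\underline{w})$ for $0 < s < T_{\underline{w}}$, each piece is a positive contact isotopy free of discriminant points, hence simultaneously a discriminant atom and a positive oscillation atom. This forces both the discriminant and oscillation lengths of $\{r_{Tt}\}$ to be at most $\ell$, so $\nu_{\dis}(\widetilde{r_T}), \nu_{\osc}(\widetilde{r_T}) \leq \ell$.

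The lower bound rests on the following key lemma: for any atom $\widetilde{\phi}$ (discriminant or oscillation) one has $c_0(\widetilde{\phi}) \in (-T_{\underline{w}}, T_{\underline{w}})$. To prove it, consider the interpolating family $\widetilde{\phi}_s := [\{\phi_{st}\}_{t \in [0,1]}]$ for $s \in [0,1]$, so $\widetilde{\phi}_0 = \widetilde{\id}$ and $\widetilde{\phi}_1 = \widetilde{\phi}$. By continuity (property (viii)) and normalization (property (ii)), the function $s \mapsto c_0(\widetilde{\phi}_s)$ is continuous and vanishes at $s=0$; by spectrality (property (i)) it takes values in $\bar{\mathcal{A}}(\widetilde{\phi}_s)$. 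Because the defining isotopy has no discriminant points for $s \in (0,1]$, the correspondence above gives $\bar{\mathcal{A}}(\widetilde{\phi}_s) \cap T_{\underline{w}}\mathbb{Z} = \emptyset$ there, and the intermediate value theorem prevents $c_0(\widetilde{\phi}_s)$ from ever reaching $\pm T_{\underline{w}}$.

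Granted the lemma, I complete the lower bound by iterating the triangle inequality on an arbitrary factorization $\widetilde{r_T} = \widetilde{\phi}_\ell \cdots \widetilde{\phi}_1$ into atoms. Set $C_j := c_0(\widetilde{\phi}_j \cdots \widetilde{\phi}_1)$; property (ix) with $j = l = 0$ (the parity hypothesis is satisfied since $0$ is even) gives $C_j \leq c_0(\widetilde{\phi}_j) + \lceil C_{j-1}\rceil_{T_{\underline{w}}}$, and combined with $c_0(\widetilde{\phi}_j) < T_{\underline{w}}$ from the lemma, a straightforward induction starting from $C_0 = 0$ yields $C_j < jT_{\underline{w}}$ and $\lceil C_j\rceil_{T_{\underline{w}}} \leq jT_{\underline{w}}$. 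Hence $T = c_0(\widetilde{r_T}) = C_\ell < \ell T_{\underline{w}}$ by property (v), forcing $\ell \geq \lfloor T/T_{\underline{w}}\rfloor + 1$. Combined with the upper bound, the discriminant and oscillation lengths of $\{r_{Tt}\}$ coincide with the corresponding norms of $\widetilde{r_T}$, which is precisely the geodesic statement. The principal obstacle is the key lemma: one must carefully invoke the equal-weights hypothesis to secure the discriminant/$T_{\underline{w}}\mathbb{Z}$-translation correspondence, and run the intermediate value argument cleanly despite $\bar{\mathcal{A}}(\widetilde{\phi}_s)$ being a potentially irregular subset of $\mathbb{R}$.
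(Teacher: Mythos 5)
Your upper bound and your treatment of the discriminant lower bound follow the paper closely: the same splitting of $\{r_{Tt}\}$ into $\lfloor T/T_{\underline{w}}\rfloor+1$ pieces of duration below the minimal Reeb period, the same key lemma (the paper's Lemma~\ref{lemma: D}, $c_0(\widetilde{\phi}) < T_{\underline{w}}$ for $\widetilde{\phi}\in\mathcal{D}$, proved by interpolation, spectrality and continuity), and the same iteration of the triangle inequality property~(\ref{triangle inequality}). Two remarks, one conceptual and one substantive.

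First, a conceptual misattribution: the key lemma does \emph{not} require equal weights. Since $r_{T_{\underline{w}}} = \id$ on $L_k^{2n-1}(\underline{w})$ by definition of $T_{\underline{w}}$, any translated point of $\bar\phi_s$ with translation in $T_{\underline{w}}\mathbb{Z}$ projects to a discriminant point of $\phi_s$ for arbitrary weights; this is the only direction of your correspondence that the argument uses, and no bijectivity (nor the generator property of $\bar r_{T_{\underline{w}}}$) is needed. Equal weights enter only later, to guarantee $T_{\underline{w}} = 2\pi/k$ so that the lower bound $\lfloor T/T_{\underline{w}}\rfloor + 1$ matches the upper bound $\lfloor kT/(2\pi)\rfloor + 1$ coming from the minimal period $2\pi/k$ of closed Reeb orbits. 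For unequal weights these two quantities can differ, which is exactly why the corollary is restricted to equal weights (cf.\ Remark~\ref{remark: general lens spaces}).

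Second, and more seriously, your lower bound argument does not actually give $\nu_{\osc}(\widetilde{r_T}) \geq \lfloor T/T_{\underline{w}}\rfloor + 1$. The iteration you run shows that \emph{any} factorization of $\widetilde{r_T}$ into $N$ atoms satisfies $T < N\,T_{\underline{w}}$, hence bounds from below the \emph{total} number of atoms. For $\nu_{\dis}$ this is exactly what is needed. But $\nu_{\osc}$ is not the minimal total number of oscillation atoms: it equals $\nu_+ - \nu_-$, where $\nu_+$ and $\nu_-$ are separate minima over all factorizations of the number of non-negative, respectively non-positive, pieces. A factorization realising $\nu_+$ may contain many non-positive pieces, so bounding its total length says nothing about $\nu_+$ directly. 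The missing steps are: (a)~observe from your positive decomposition that $\nu_-(\widetilde{r_T}) = 0$, hence $\nu_{\osc}(\widetilde{r_T}) = \nu_+(\widetilde{r_T})$; and (b)~given a factorization with $N_+ = \nu_+$ non-negative atoms, use monotonicity (property~(\ref{monotonicity})) to discard the non-positive ones, obtaining $\widetilde{r_T} \leq \prod_{j=1}^{N_+}\widetilde{\phi}_{\sigma(j)}$, and only then run the triangle inequality on that product of $N_+$ positive atoms to get $T \leq c_0\big(\prod\widetilde{\phi}_{\sigma(j)}\big) < N_+\,T_{\underline{w}}$. Without the monotonicity step there is no control on $\nu_+$, and hence none on $\nu_{\osc}$.
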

For general lens spaces $L_k^{2n-1} (\underline{w})$
we also obtain lower bounds
for the discriminant and oscillation norms of the standard Reeb flow
that are sharper than those in \cite{CS, GKPS},
however in general these bounds are not sharp enough
to prove that the Reeb flow is a geodesic
(see \autoref{remark: general lens spaces}).

Using the spectral selectors of Theorem \ref{theorem: main}
we also define a stably unbounded conjugation invariant pseudonorm
on $\widetilde{\Cont_0} \big( L_k^{2n-1} (\underline{w}), \xi_0 \big)$.
More precisely,
posing $c_- = c_{-2n+1}$ and $c_+ = c_0$
we prove the following result.

\begin{cor}[Spectral pseudonorm] \label{corollary: word norms}
The map $\nu: \widetilde{\Cont_0} \big(L_k^{2n-1} (\underline{w}), \xi_0 \big) \rightarrow T_{\underline{w}} \cdot \mathbb{Z}$
defined by
$$
\nu (\widetilde{\phi})
= \max \left\{\, \left\lceil c_+ (\widetilde{\phi}) \right\rceil_{T_{\underline{w}}} \,,\,
-  \left\lfloor c_- (\widetilde{\phi}) \right\rfloor_{T_{\underline{w}}} \,\right\}
$$
is a stably unbounded conjugation invariant pseudonorm,
which is compatible with the partial order $\leq$
and satisfies
$\nu (\widetilde{\phi}) \leq T_{\underline{w}} \cdot \nu_{\osc} (\widetilde{\phi})$
for every $\widetilde{\phi} \in \widetilde{\Cont_0} \big( L_k^{2n-1} (\underline{w}), \xi_0 \big)$.
The induced pseudonorm $\nu_{\ast}$ on $\Cont_0 \big( L_k^{2n-1} (\underline{w}), \xi_0 \big)$
is non-degenerate and bounded.
\end{cor}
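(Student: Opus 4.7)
The plan is to establish each assertion about $\nu$ and $\nu_\ast$ separately, reducing everything to the properties of the spectral selectors in \autoref{theorem: main}; Poincar\'e duality (\ref{PD}) and the triangle inequality (\ref{triangle inequality}) will do most of the work.

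\emph{Pseudonorm axioms, conjugation invariance, order compatibility.} First I compute $c_\pm(\widetilde{\id})=0$: by (\ref{normalization}) $c_+(\widetilde{\id})=0$, and applying (\ref{PD}) at $j=0$ to $\widetilde{\id}$ yields $\lfloor c_-(\widetilde{\id})\rfloor_{T_{\underline{w}}}=0$, which combined with the ordering $c_-\leq c_+$ forces $c_-(\widetilde{\id})=0$; in particular $\nu(\widetilde{\id})=0$. Non-negativity follows because $c_-\leq c_+$ implies that whenever $\lceil c_+\rceil_{T_{\underline{w}}}< 0$ one has $c_-<0$, so $-\lfloor c_-\rfloor_{T_{\underline{w}}}\geq T_{\underline{w}}>0$. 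For symmetry, (\ref{PD}) with $j=0$ applied to $\widetilde{\phi}$ and to $\widetilde{\phi}^{-1}$ gives the two identifications $\lceil c_+(\widetilde{\phi})\rceil_{T_{\underline{w}}}=-\lfloor c_-(\widetilde{\phi}^{-1})\rfloor_{T_{\underline{w}}}$ and $\lceil c_+(\widetilde{\phi}^{-1})\rceil_{T_{\underline{w}}}=-\lfloor c_-(\widetilde{\phi})\rfloor_{T_{\underline{w}}}$, so the two terms in the max simply swap under inversion. For subadditivity I apply (\ref{triangle inequality}) at $j=l=0$ (admissible since $0$ is even) to control $\lceil c_+(\widetilde{\phi}\widetilde{\psi})\rceil_{T_{\underline{w}}}$, and after dualising via (\ref{PD}) using $(\widetilde{\phi}\widetilde{\psi})^{-1}=\widetilde{\psi}^{-1}\widetilde{\phi}^{-1}$ the same inequality bounds $-\lfloor c_-(\widetilde{\phi}\widetilde{\psi})\rfloor_{T_{\underline{w}}}$; the elementary $\max(a_1+a_2,b_1+b_2)\leq\max(a_1,b_1)+\max(a_2,b_2)$ concludes. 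Conjugation invariance of $\lceil c_+\rceil_{T_{\underline{w}}}$ is (\ref{conjugaison invariance}) with $j=0$, and the analogous statement for $-\lfloor c_-\rfloor_{T_{\underline{w}}}$ follows from the same dualisation. Finally, if $\widetilde{\id}\leq\widetilde{\phi}\leq\widetilde{\psi}$, then by monotonicity (\ref{monotonicity}) both $c_+$ and $c_-$ of these elements are non-negative, so $\nu=\lceil c_+\rceil_{T_{\underline{w}}}$ on both, and monotonicity of $c_+$ gives $\nu(\widetilde{\phi})\leq\nu(\widetilde{\psi})$.

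\emph{Comparison with the oscillation norm and stable unboundedness.} The key fact underlying the bound $\nu(\widetilde{\phi})\leq T_{\underline{w}}\cdot\nu_{\osc}(\widetilde{\phi})$ is that, under the lift $\mathcal{L}$, a discriminant point of an element of $\Cont_0(L_k^{2n-1}(\underline{w}))$ produces a translated point of the corresponding element of $\Cont_0(\mathbb{S}^{2n-1})$ whose translation lies in $T_{\underline{w}}\mathbb{Z}$, because the intersection of the deck group with the Reeb flow is generated by $r_{T_{\underline{w}}}$. Combined with spectrality (\ref{spectrality}), continuity (\ref{continuity}), monotonicity (\ref{monotonicity}) and subadditivity, this allows one to bound the contribution of each non-negative (respectively non-positive) factor of an oscillation decomposition by $T_{\underline{w}}$ in $\lceil c_+\rceil_{T_{\underline{w}}}$ (respectively in $-\lfloor c_-\rfloor_{T_{\underline{w}}}$), and then telescope over the decomposition. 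For stable unboundedness, iterating (\ref{composition with the Reeb flow main}) gives $c_+(\widetilde{r_T}^m)=c_+(\widetilde{r_{mT}})=mT$ for every $T>0$, so $\nu(\widetilde{r_T}^m)\geq mT$; this linear growth cannot be absorbed by conjugation because $\nu$ is conjugation-invariant, witnessing stable unboundedness in the sense of Burago-Ivanov-Polterovich.

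\emph{Non-degeneracy and boundedness of $\nu_\ast$.} Since $\nu$ takes values in the discrete set $T_{\underline{w}}\mathbb{Z}$, if $\nu_\ast(\phi)=0$ then the infimum is attained by some lift $\widetilde{\phi}$ of $\phi$ with $\nu(\widetilde{\phi})=0$; this forces $c_+(\widetilde{\phi})\leq 0$ and $c_-(\widetilde{\phi})\geq 0$, which together with $c_-\leq c_+$ gives $c_-(\widetilde{\phi})=c_+(\widetilde{\phi})=0$. Property (\ref{non-degeneracy}) then yields $\Pi(\mathcal{L}(\widetilde{\phi}))=\id$ on $\mathbb{S}^{2n-1}$, and projecting back down gives $\phi=\id$. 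For boundedness, the element $\widetilde{r_{T_{\underline{w}}}}$ belongs to $\ker\Pi$ since $r_{T_{\underline{w}}}=\id$ on $L_k^{2n-1}(\underline{w})$; hence for any lift $\widetilde{\phi}$ of $\phi\in\Cont_0(L_k^{2n-1}(\underline{w}))$ and any integer $m$, the shifted element $\widetilde{r_{-mT_{\underline{w}}}}\cdot\widetilde{\phi}$ is another lift, and by (\ref{composition with the Reeb flow main}) its $c_\pm$ values are shifted by $-mT_{\underline{w}}$. Choose $m$ so that $c_+$ of the shifted lift lies in $(-T_{\underline{w}},0]$. Then using periodicity (\ref{periodicity}) together with the ordering, one computes $c_+-c_-=c_0-c_{-2n+1}=c_0-c_1+2\pi\leq 2\pi$, so $c_-$ of the shifted lift lies in $(-T_{\underline{w}}-2\pi,0]$; this forces $\nu$ of the shifted lift to be at most $T_{\underline{w}}+2\pi$, a uniform constant independent of $\phi$.

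The most delicate step of the whole argument is the comparison with the oscillation norm, because it requires a careful translation between the discriminant structure downstairs on $L_k^{2n-1}(\underline{w})$ and the spectral structure upstairs on $\mathbb{S}^{2n-1}$ where the selectors $c_j$ are actually defined; every other step follows almost mechanically from the properties of the $c_j$ combined with Poincar\'e duality.
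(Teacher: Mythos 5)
Your proof follows the same overall structure as the paper and the bulk of it is correct: non-negativity via $c_-\leq c_+$, symmetry and subadditivity via Poincar\'e duality together with the triangle inequality at $j=l=0$, conjugation invariance via (\ref{conjugaison invariance}) and (\ref{PD}), order compatibility via monotonicity together with $c_\pm(\widetilde{\id})=0$, stable unboundedness from (\ref{composition with the Reeb flow main}), non-degeneracy of $\nu_\ast$ from discreteness of the values of $\nu$ and property (\ref{non-degeneracy}), and boundedness of $\nu_\ast$ by normalising a lift so that $c_+$ lies in $(-T_{\underline{w}},0]$ and then using periodicity and the non-decreasing property to obtain $c_+-c_-\leq 2\pi$ — all of this matches the paper's proof in content.

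However, the ``key fact'' you isolate for the comparison $\nu\leq T_{\underline{w}}\cdot\nu_{\osc}$ has the implication in the wrong direction, and as stated it is actually \emph{false} for general weights. You assert that a discriminant point of $\phi$ on $L_k^{2n-1}(\underline{w})$ produces a translated point of the lift $\bar{\phi}$ on $\mathbb{S}^{2n-1}$ with translation in $T_{\underline{w}}\mathbb{Z}$. But if $p$ is a discriminant point of $\phi$ and $\bar{p}$ is a lift, then $\bar{\phi}(\bar{p})=g\bar{p}$ for some deck transformation $g$, and when the weights are not all equal $g$ need not lie in the Reeb flow at all, so $\bar{p}$ need not be a translated point of $\bar{\phi}$; for instance, with $\underline{w}=(w_1,w_2)$, $w_1\neq w_2$, $\gcd(w_1-w_2,k)=1$ one has $T_{\underline{w}}=2\pi$, yet a discriminant point on the circle $\{z_2=0\}$ lifts to a translated point of translation $2\pi w_1/k\notin 2\pi\mathbb{Z}$. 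What is actually needed — and what makes the paper's Lemma~\ref{lemma: D} ($\widetilde{\phi}\in\mathcal{D}\Rightarrow c_0(\widetilde{\phi})<T_{\underline{w}}$) work — is precisely the \emph{converse}: if $\bar{\phi}$ has a translated point with translation in $T_{\underline{w}}\mathbb{Z}$, then since $\bar{r}_{-T_{\underline{w}}}$ is a deck transformation this translated point projects to a discriminant point of $\phi$ downstairs. Combined with spectrality, normalization, and continuity, this is what forbids $c_0$ of an embedded isotopy from reaching $T_{\underline{w}}$, and hence bounds each factor's contribution in the telescoping argument. Your stated fact, even if it were true, would give no information about the embedded factors (which by hypothesis have \emph{no} discriminant points away from time zero), so the logical role you assign it cannot be fulfilled. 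This step therefore has a genuine gap that you would need to repair by stating and proving the correct implication before the telescoping goes through.
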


Finally,
we remark that the spectral selectors of \autoref{theorem: main}
can also be used as in \cite{AA}
to define a time function  on $\widetilde{\Cont_0} \big( L_k^{2n-1} (\underline{w}), \xi_0 \big)$,
i.e. a function
$$
\tau: \widetilde{\Cont_0} \big( L_k^{2n-1} (\underline{w}), \xi_0 \big) \rightarrow \mathbb{R}
$$
that is continuous with respect to the $\mathcal{C}^1$-topology
and satisfies $\tau (\widetilde{\phi}) < \tau (\widetilde{\psi})$
whenever $\widetilde{\phi} \leq \widetilde{\psi}$ with $\widetilde{\phi} \neq \widetilde{\psi}$.
Such function can be defined by
$$
\tau (\widetilde{\phi})
= \left( \sum_j \frac{1}{2^j \max(1, | c_0 (\widetilde{\psi}_j) |)}  \right)^{-1}
\; \sum_j \frac{c_0 (\widetilde{\phi} \cdot \widetilde{\psi}_j)}{2^j \max(1, | c_0 (\widetilde{\psi}_j) |)} \;,
$$
where $(\widetilde{\psi}_j)_{j \geq 1}$
is any dense sequence in $\widetilde{\Cont_0} \big( L_k^{2n-1} (\underline{w}), \xi_0 \big)$
with respect to the $\mathcal{C}^1$-topology.
As in \cite{AA},
the time function $\tau$ satisfies moreover
$\tau ( \widetilde{r_T} \cdot \widetilde{\phi})
= T + \tau (\widetilde{\phi})$
for all $T$ and $\widetilde{\phi}$.

The article is organized as follows.
In Section \ref{section: non-linear Maslov index}
we recall the definition of the non-linear Maslov index
and discuss the properties that are needed
for the construction of the spectral selectors.
In Section \ref{section: action selectors} we define the spectral selectors
and prove Theorem \ref{theorem: main}.
In Section \ref{section: Reeb geodesic}
we prove Corollary \ref{corollary: Reeb geodesic},
and in Section \ref{section: conjugation invariant norms}
we prove Corollary \ref{corollary: word norms}.

%------
\subsection*{Acknowledgments} 

The third author thanks Alberto Abbondandolo
for the invitation to Bochum and kind hospitality
in the fall of 2022,
when part of this work has been done.
We all thank Alberto also for his support
and for inspiring discussions.
The version of this article that was first posted to \emph{arXiv}
contained a mistake concerning the period of the Reeb flow
on lens spaces with general weights
and affecting in particular \autoref{corollary: Reeb geodesic},
which was stated in more generality
than what we can actually prove.
We are grateful to Miguel Abreu
for noticing this mistake
and for very useful discussions.

The first author was funded by the postdoctoral fellowships
of the \emph{Fondation Sciences Math\'ematiques de Paris}
and of the \emph{Interdisciplinary Thematic Institute IRMIA++},
as part of the \emph{ITI} 2021-2028 program of the University of Strasbourg, CNRS and Inserm, supported by \emph{IdEx Unistra (ANR-10-IDEX-0002)}
and by \emph{SFRI-STRAT’US project (ANR-20-SFRI-0012)}
under the framework of the \emph{French Investments for the Future Program}. 
The second author is partially supported
by the \emph{Deutsche Forschungsgemeinschaft}
under the \emph{Collaborative Research Center SFB/TRR 191 - 281071066 (Symplectic Structures in Geometry, Algebra and Dynamics)}.
The third author is partially supported
by the ANR project \emph{COSY (ANR-21-CE40-0002)}.

%%%%%%%%%%%%%%%%%%%%%%%%%%%%%%%%%%%%%%%%%%%%%%%%%

\section{The non-linear Maslov index}\label{section: non-linear Maslov index}

In this section we recall the definition of the non-linear Maslov index
$$
\mu: \widetilde{\Cont_0} \big( L_k^{2n-1} (\underline{w}), \xi_0 \big) \rightarrow \mathbb{Z}
$$
following the presentation in \cite{GKPS},
to which we refer for more details.
We also discuss the properties of the non-linear Maslov index
that are needed for the construction of the spectral selectors.
Several of these properties do not appear in \cite{GKPS},
and so we include detailed proofs.

Since the weights $\underline{w}$
do not play a particular role in the discussion,
in this section we denote $L_k^{2n-1} (\underline{w})$
simply by $L_k^{2n-1}$.
As in \cite{GKPS},
we first define the non-linear Maslov index
assuming  that $k$ is prime
and then obtain the general case
(\autoref{proposition: nonlinear Maslov index general k})
by pullback.
Assume thus for now that $k$ is prime.

The construction of the non-linear Maslov index
is based on generating functions.
Recall that a function $F: E \rightarrow \mathbb{R}$
defined on the total space of a fibre bundle $p: E \rightarrow B$
is said to be a generating function
if the differential $dF: E \rightarrow T^{\ast}E$
is transverse to the fibre conormal bundle $N_E^{\ast}$,
the space of points $(e, \eta)$ of $T^{\ast}E$
such that $\eta$ vanishes on the kernel of $dp (e)$.
Then the set $\Sigma_F = (dF)^{-1} (N_E^{\ast})$ of fibre critical points of $F$
is a submanifold of $E$,
and the map
$$
i_F: \Sigma_F \rightarrow T^{\ast}B \,,\; e \mapsto \big( p(e) , v^{\ast}(e) \big)
$$
defined by posing $v^{\ast}(e) (X) = dF (\widehat{X})$
for $X \in T_{p(e)}B$,
where $\widehat{X}$ is any vector in $T_eE$
with $dp (e) (\widehat{X}) = X$,
is a Lagrangian immersion
with respect to the canonical symplectic form $\omega_{\can}$ on $T^{\ast}B$.
If $i_F$ is an embedding then $F$ is said to be a generating function
of the Lagrangian submanifold $i_F (\Sigma_F)$ of $(T^{\ast}B, \omega_{\can})$.
A function $F$ is said to be a generating function
of a symplectomorphism $\Phi$ of $(\mathbb{R}^{2n}, \omega_0)$
if it is a generating function of the Lagrangian submanifold
of $(T^{\ast}\mathbb{R}^{2n}, \omega_{\can})$
that is the image of the graph of $\Phi$
by the symplectomorphism
$\tau: \overline{\mathbb{R}^{2n}} \times \mathbb{R}^{2n} \rightarrow T^{\ast}\mathbb{R}^{2n}$
defined by
$$
\tau (x, y, X, Y) = \Big( \frac{x + X}{2}, \frac{y + Y}{2}, Y - y, x - X \Big) \,.
$$

Any contact isotopy $\{\phi_t\}_{t \in [0, 1]}$ of $(L_k^{2n-1}, \xi_0)$
starting at the identity
can be uniquely lifted to a $\mathbb{Z}_k$-equivariant
contact isotopy $\{\bar{\phi}_t\}_{t \in [0, 1]}$ of $(\mathbb{S}^{2n-1}, \bar{\xi}_0)$
starting at the identity,
which in turn can be uniquely extended
to a conical Hamiltonian isotopy
$\{\Phi_t\}_{t \in [0, 1]}$ of $(\mathbb{R}^{2n}, \omega_0)$,
i.e.\ a 1-parameter family of $(\mathbb{Z}_k \times \mathbb{R}_{>0})$-equivariant
homeomorphisms of $\mathbb{R}^{2n}$
that is a Hamiltonian isotopy on $\mathbb{R}^{2n} \smallsetminus \{0\}$.
If $M$ is a multiple of $n$ then
we say that a function $F: \mathbb{R}^{2M} \rightarrow \mathbb{R}$
is conical if it is $\mathcal{C}^1$ with Lipschitz differential,
homogeneous of degree 2 with respect to the radial action of $\mathbb{R}_{>0}$
on $\mathbb{R}^{2M}$,
and invariant by the diagonal action of $\mathbb{Z}_k$ on $\mathbb{R}^{2M}$.
We say that a conical function $F: E \rightarrow \mathbb{R}$
defined on the total space of a trivial vector bundle
$E = \mathbb{R}^{2n} \times \mathbb{R}^{2nN} \rightarrow \mathbb{R}^{2n}$
is a conical generating functions
of a conical symplectomorphism $\Phi$ of $(\mathbb{R}^{2n}, \omega_0)$,
i.e. a $(\mathbb{Z}_k \times \mathbb{R}_{>0})$-equivariant homeomorphism of $\mathbb{R}^{2n}$
that is a symplectomorphism on $\mathbb{R}^{2n} \smallsetminus \{0\}$,
if it is smooth near its fibre critical points other than the origin,
$dF: E \rightarrow T^{\ast}E$ is transverse to the fibre conormal bundle $N_E^{\ast}$
except possibly at the origin,
and $i_F$ is a homeomorphism between $\Sigma_F$
and the image of the graph of $\Phi$ by $\tau$.
We say that $F_t: \mathbb{R}^{2n} \times \mathbb{R}^{2nN} \rightarrow \mathbb{R}$, $t \in [0, 1]$,
is a family of conical generating functions
for a contact isotopy $\{\phi_t\}_{t \in [0, 1]}$ of $(L_k^{2n-1}, \xi_0)$
starting at the identity
if for every $t$ the function $F_t$ is a conical generating function of $\Phi_t$,
where $\{\Phi_t\}$ denotes the conical Hamiltonian isotopy of $(\mathbb{R}^{2n}, \omega_0)$
lifting $\{\phi_t\}$,
and the map $(e, t) \mapsto F_t (e)$
is $\mathcal{C}^1$ with locally Lipschitz differential
and smooth near $(e, t)$
whenever $e$ is a fibre critical point of $F_t$
other than the origin.
We say that $F_t$, $t \in [0, 1]$,
is a based family of conical generating functions for $\{\phi_t\}_{t \in [0, 1]}$
if moreover $F_0$ is equivalent to the zero function on $\mathbb{R}^{2n}$,
where we consider on the set of conical generating functions
the smallest equivalence relation
under which two such functions are equivalent
if they differ by a stabilization
(i.e.\ replacing a conical generating function
$F:  \mathbb{R}^{2n} \times \mathbb{R}^{2nN} \rightarrow \mathbb{R}$
by $F \oplus Q:  \mathbb{R}^{2n} \times \mathbb{R}^{2nN} \times \mathbb{R}^{2nN'} \rightarrow \mathbb{R}$
for a non-degenerate $\mathbb{Z}_k$-invariant quadratic form
$Q$ on $\mathbb{R}^{2nN'}$)
or by a fibre preserving conical homeomorphism
(i.e.\ a $(\mathbb{Z}_k \times \mathbb{R}_{>0})$-equivariant homeomorphism
of $\mathbb{R}^{2n} \times \mathbb{R}^{2nN}$
that takes each fibre $\{z\} \times \mathbb{R}^{2nN}$ to itself)
that restricts to a diffeomorphism
between neighborhoods of fibre critical points other than the origin.
It is proved in \cite[Proposition 2.14]{GKPS}
that any contact isotopy $\{\phi_t\}_{t \in [0, 1]}$ of $(L_k^{2n-1}, \xi_0)$
starting at the identity
has a based family $F_t:  \mathbb{R}^{2n} \times \mathbb{R}^{2nN} \rightarrow \mathbb{R}$
of conical generating functions.

A conical function $F: \mathbb{R}^{2M} \rightarrow \mathbb{R}$
induces uniquely a function $f: L_k^{2M - 1} \rightarrow \mathbb{R}$,
which is $\mathcal{C}^1$ with Lipschitz differential.
All the critical points of $F$ have critical value zero
and come in $( \mathbb{Z}_k \times \mathbb{R}_{>0} )$-families;
moreover,
there is a 1--1 correspondence
between the $( \mathbb{Z}_k \times \mathbb{R}_{>0} )$-families
of critical points of $F$
and the critical points of critical value zero of $f$.
If $F$ is a conical generating function
of a conical symplectomorphism $\Phi$
whose restriction $\bar{\phi}$ to $\mathbb{S}^{2n-1}$
projects to a contactomorphism $\phi$ of $(L_k^{2n-1}, \xi_0)$
then there is a 1--1 correspondence
between the critical points of critical value zero of $f$
and the discriminant points of $\phi$
that lift to discriminant points of $\bar{\phi}$.
In order to detect discriminant points
of contactomorphisms of $(L_k^{2n-1}, \xi_0)$
we thus study the topology of the sublevel set at zero
of the functions
on (possibly higher dimensional) lens spaces
induced by the corresponding conical generating functions.
The topological invariant that we use for this
is the cohomological index
for subsets of lens spaces:
for a subset $A$ of $L_k^{2M-1}$
such index, which we denote by $\ind (A)$,
is the dimension over $\mathbb{Z}_k$
of the image of the map
$\check{H}^{\ast} ( L_k^{2M-1}; \mathbb{Z}_k ) \rightarrow \check{H}^{\ast} ( A; \mathbb{Z}_k )$
on \v{C}ech cohomology induced by the inclusion
$A \hookrightarrow L_k^{2M-1}$.
For any conical function $F: \mathbb{R}^{2M} \rightarrow \mathbb{R}$
we thus define
$$
\ind (F) = \ind ( \{ f \leq 0 \} ) \,.
$$

The following property is proved
in \cite[Corollary 3.15]{GKPS}
(cf \cite[Proposition 3.14, Proposition 3.9 (v) and Remark 3.11]{GKPS}
for the case $k = 2$).

\begin{lemma}\label{lemma: direct sum cohomological index}
For any two conical functions $F$ and $G$
we have
$$
\lvert \; \ind (F \oplus G) - \ind (F) - \ind (G) \; \lvert \leq 1 \,,
$$
and
$$
\ind (F \oplus G) = \ind (F) + \ind (G)
$$
if either $k = 2$ or $\ind (F)$ is even or $\ind (G)$ is even.
\end{lemma}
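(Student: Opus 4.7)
The plan is to identify the sublevel set $\{f \oplus g \leq 0\} \subset L_k^{2(M_F+M_G)-1}$ with a join of the sublevel sets $A = \{f \leq 0\} \subset L_k^{2M_F-1}$ and $B = \{g \leq 0\} \subset L_k^{2M_G-1}$, and then to compute the cohomological index of this join via the ring structure of $\check{H}^*(L_k^{\bullet}; \mathbb{Z}_k)$.

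First I would exploit the degree-$2$ homogeneity of $F$ and $G$. Parametrizing points of $\mathbb{S}^{2(M_F+M_G)-1}$ as $(\sqrt{t}\,x, \sqrt{1-t}\,y)$ with $t \in [0,1]$, $x \in \mathbb{S}^{2M_F-1}$, $y \in \mathbb{S}^{2M_G-1}$, the sublevel condition becomes the linear inequality $t F(x) + (1-t) G(y) \leq 0$ in $t$. For fixed $(x,y)$ its solution set in $[0,1]$ is a (possibly empty) subinterval meeting $\{0\}$ iff $[y] \in B$ and $\{1\}$ iff $[x] \in A$; in particular it is nonempty precisely when at least one of $[x] \in A$ or $[y] \in B$ holds, since otherwise both $F(x)$ and $G(y)$ would be positive. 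Descending to the $\mathbb{Z}_k$-quotient and linearly retracting in $t$ towards these endpoints shows that $\{f \oplus g \leq 0\}$ deformation retracts onto the $\mathbb{Z}_k$-join $A \star B \subset L_k^{2(M_F+M_G)-1}$, so $\ind(F \oplus G) = \ind(A \star B)$.

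The main step is to compute $\ind(A \star B)$ via a Mayer--Vietoris argument. Cover $A \star B$ by the two open half-joins, each of which deformation retracts onto $A$ or $B$, with intersection retracting onto $A \times B$. Performing the identical cover on the ambient join $L_k^{2(M_F+M_G)-1} \cong L_k^{2M_F-1} \star L_k^{2M_G-1}$ and comparing the restriction maps yields a multiplicative pairing relating the images of $\check{H}^*(L_k^{2M_F-1}) \to \check{H}^*(A)$ and $\check{H}^*(L_k^{2M_G-1}) \to \check{H}^*(B)$ to the image of $\check{H}^*(L_k^{2(M_F+M_G)-1}) \to \check{H}^*(A \star B)$, with a single degree shift coming from the connecting morphism. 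For $k=2$ the ring $\mathbb{Z}_2[\alpha]/(\alpha^{2M})$ with $|\alpha|=1$ has one generator per degree, so this pairing is nondegenerate and additivity $\ind(A \star B) = \ind(A) + \ind(B)$ holds exactly. For odd prime $k$ the ring $\Lambda(\alpha) \otimes \mathbb{Z}_k[\beta]/(\beta^M)$ with $|\alpha|=1$, $|\beta|=2$, $\alpha^2=0$ contains an exterior generator, and a parity analysis of the top surviving classes in $H^*(A)$ and $H^*(B)$ shows that the product of those classes may unexpectedly vanish by $\alpha^2 = 0$ only when both $\ind(A)$ and $\ind(B)$ are odd, producing an allowed discrepancy of at most one in that case, while in all other configurations the pairing is nondegenerate.

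The main obstacle is this parity bookkeeping for odd $k$: identifying the precise parity of the top surviving class in $H^*(A)$ in terms of $\ind(A)$ alone, and verifying via the ring structure and the Mayer--Vietoris connecting morphism that the only source of an off-by-one discrepancy in the index is the relation $\alpha^2 = 0$. Once the cohomological join formula is established, combining with the first step gives both the inequality $|\ind(F \oplus G) - \ind(F) - \ind(G)| \leq 1$ and the equality under the stated parity conditions, which also extends to general $k$ by pullback from the prime case via the natural covering argument of \cite{GKPS}.
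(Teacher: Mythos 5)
First, a caveat: the paper does not actually prove this lemma — it quotes it directly from GKPS (Corollary 3.15, with Proposition 3.14, Proposition 3.9(v) and Remark 3.11 cited for the case $k=2$), so there is no in-paper proof to compare yours against. Your strategy of identifying the sublevel set with the join $A\star B$ and computing the index of the join via Mayer--Vietoris and the ring $H^*(L_k^\bullet;\mathbb{Z}_k)$ is a reasonable one (and is close in spirit to GKPS's own route), but as written it contains several gaps. The "linear retraction in $t$" is not continuous: when $F(x)<0$ is fixed and $G(y)\to 0^+$ the solution interval tends to all of $[0,1]$ while you keep pushing $t$ to $1$, but at $G(y)=0$ you fix the whole segment. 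Also the Mayer--Vietoris intersection is not $A\times B$: since one quotients the sphere join by the \emph{diagonal} $\mathbb{Z}_k$-action, the middle of $A\star B$ is $(\bar A\times\bar B)/\mathbb{Z}_k$, a nontrivial $k$-fold cover of $A\times B$, and the ambient class $\alpha$ restricts to a single diagonal class, so the relevant pairing is over $H^*(B\mathbb{Z}_k)$, not over $\mathbb{Z}_k$.

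The most serious problem is the parity bookkeeping, which is backwards. In $\Lambda(\alpha)\otimes\mathbb{Z}_k[\beta]$ the degree-$d$ class carries the exterior factor $\alpha$ iff $d$ is odd, and the top surviving class of $A$ sits in degree $\ind(A)-1$; hence it carries $\alpha$ iff $\ind(A)$ is \emph{even}. So the product of the two top surviving classes is annihilated by $\alpha^2=0$ precisely when both indices are \emph{even} -- exactly the case in which the lemma asserts there is no defect. Your stated conclusion (defect only when both indices are odd) happens to agree with the lemma, but it does not follow from the argument you give. The correct mechanism works with the kernel ideals $I_A,I_B$ of the restrictions: $I_A$ is generated in degree $\ind(A)$, whose class carries $\alpha$ iff $\ind(A)$ is \emph{odd}; one shows $I_{A\star B}\supseteq I_A\cdot I_B$, and when both indices are odd the products of generators $\alpha\beta^a\cdot\alpha\beta^b=0$ make $I_A I_B$ smaller than expected, so the quotient -- which bounds $\ind(A\star B)$ from above -- is one dimension \emph{larger}, giving $\ind(A\star B)\le\ind(A)+\ind(B)+1$. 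This also shows that the possible defect is an overshoot, not the shortfall that your "product may unexpectedly vanish" picture suggests. Finally, the absolute-value inequality in the first assertion also needs a lower bound, which the Mayer--Vietoris upper-bound analysis alone does not provide; that requires a separate argument.
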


For a contact isotopy $\{\phi_t\}_{t \in [0, 1]}$ of $(L_k^{2n-1}, \xi_0)$
we define
$$
\mu (\{\phi_t\}_{t \in [0, 1]}) = \ind ( F_0 ) - \ind( F_1 ) \,,
$$
where $F_t$, $t \in [0, 1]$, is any based family
of conical generating functions for $\{\phi_t\}_{t \in [0, 1]}$.
It is proved in \cite[Proposition 2.20]{GKPS}
that any two based families
of conical generating functions for $\{\phi_t\}_{t \in [0, 1]}$
are equivalent,
where we consider on the set of based families of conical generating functions
the smallest equivalence relation
under which two such families are equivalent
if they differ by a stabilization
(i.e.\ replacing $F_t:  \mathbb{R}^{2n} \times \mathbb{R}^{2nN} \rightarrow \mathbb{R}$
by $F_t \oplus Q:  \mathbb{R}^{2n} \times \mathbb{R}^{2nN} \times \mathbb{R}^{2nN'} \rightarrow \mathbb{R}$
for a non-degenerate $\mathbb{Z}_k$-invariant quadratic form
$Q$ on $\mathbb{R}^{2nN'}$)
or by a 1-parameter family of fibre preserving conical homeomorphism
that restrict to diffeomorphisms
between neighborhoods of fibre critical points other than the origin.
Since, for $k > 2$, $\ind (Q)$ is even
for every $\mathbb{Z}_k$-invariant quadratic form $Q$
(\cite[Remark 3.13]{GKPS}),
it thus follows from \autoref{lemma: direct sum cohomological index}
that $\mu (\{\phi_t\}_{t \in [0, 1]})$ is well-defined,
i.e.\ it does not depend on the choice
of a based family of conical generating functions.
Moreover,
it is proved in \cite{GKPS}
(as a consequence of \cite[Proposition 2.21]{GKPS})
that $\mu$ descends to a map
$$
\mu: \widetilde{\Cont_0} (L_k^{2n-1}, \xi_0) \rightarrow \mathbb{Z} \,.
$$

\begin{ex}\label{example: identity etc}
By definition we have $\mu (\widetilde{\id}) = 0$.
By \cite[Example 4.1]{GKPS},
if $\widetilde{\phi}$ is small enough in the $\mathcal{C}^1$-topology
then $0 \leq \mu (\widetilde{\phi}) \leq 2n$,
and if moreover $\widetilde{\phi}$ is positive
then $\mu (\widetilde{\phi}) = 2n$.
In particular,
for $\epsilon > 0$ small enough
we have $\mu (\widetilde{r_{\epsilon}}) = 2n$.
By \cite[Example 4.13]{GKPS},
for every integer $m$
we have $\mu (\widetilde{r_{2 \pi m}}) = 2n m$.
\end{ex}

It is proved in \cite[Theorem 1.4 (i) and Remark 1.7]{GKPS}
that for any two elements $\widetilde{\phi}$ and $\widetilde{\psi}$
of $\widetilde{\Cont_0} (L_k^{2n-1}, \xi_0)$
we have
\begin{equation}\label{equation: quasimorphism}
\lvert\; \mu ( \widetilde{\phi} \cdot \widetilde{\psi} ) - \mu (\widetilde{\phi}) - \mu (\widetilde{\psi}) \;\rvert \leq 2n + 1 \,,
\end{equation}
and
\begin{equation}\label{equation: quasimorphism projective space}
\lvert\; \mu ( \widetilde{\phi} \cdot \widetilde{\psi} ) - \mu (\widetilde{\phi}) - \mu (\widetilde{\psi}) \;\rvert \leq 2n
\end{equation}
if $k = 2$;
in particular,
$\mu: \widetilde{\Cont_0} ( L_k^{2n-1}, \xi_0) \rightarrow \mathbb{Z}$
is a quasimorphism.
The non-linear Maslov index also satisfies the following triangle inequality,
which is not proved in \cite{GKPS}.

\begin{prop}\label{proposition: triangle inequality}
For any two elements $\widetilde{\phi}$ and $\widetilde{\psi}$
of $\widetilde{\Cont_0} (L_k^{2n-1}, \xi_0)$
we have
$$
\mu ( \widetilde{\phi} \cdot \widetilde{\psi} )
\leq \mu ( \widetilde{\phi} ) + \mu ( \widetilde{\psi} ) +1 \,,
$$
and
$$
\mu ( \widetilde{\phi} \cdot \widetilde{\psi} ) \leq \mu ( \widetilde{\phi} ) + \mu ( \widetilde{\psi} )
$$
if either $k = 2$
or $\mu ( \widetilde{\phi} )$ is even or $\mu ( \widetilde{\psi} )$ is even.
\end{prop}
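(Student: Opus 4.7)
The plan is to produce, starting from chosen based families of conical generating functions $F_t$ for $\widetilde{\phi}$ and $G_t$ for $\widetilde{\psi}$, a based family $H_t$ of conical generating functions for $\widetilde{\phi} \cdot \widetilde{\psi}$ whose cohomological indices at the two endpoints coincide with $\ind(F_0 \oplus G_0)$ and $\ind(F_1 \oplus G_1)$. The inequality will then follow by applying \autoref{lemma: direct sum cohomological index} separately at each endpoint.

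To construct such an $H_t$, I would represent $\widetilde{\phi} \cdot \widetilde{\psi}$ by the concatenation of $\{\phi_{2t}\}_{t \in [0, 1/2]}$ and $\{\phi_1 \psi_{2t-1}\}_{t \in [1/2, 1]}$. On the first half I take $F_{2t}$ stabilized by a non-degenerate $\mathbb{Z}_k$-invariant quadratic form on the $G$-fibre; on the second half I take a composition-type generating function, in the spirit of Chekanov and Viterbo, built from $F_1$ and $G_{2t-1}$ by using an auxiliary $\mathbb{R}^{2n}$-variable encoding the intermediate point. The crucial technical input is that this composition generating function is related to the direct sum $F_1 \oplus G_{2t-1}$ by a fibre-preserving conical homeomorphism together with a stabilization absorbing the auxiliary variable, so that the two have the same cohomological index.

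With such a based family $H_t$ in hand,
$$
\mu(\widetilde{\phi} \cdot \widetilde{\psi}) \;=\; \ind(H_0) - \ind(H_1) \;=\; \ind(F_0 \oplus G_0) - \ind(F_1 \oplus G_1).
$$
At $t = 0$, both $F_0$ and $G_0$ are equivalent to the zero function on $\mathbb{R}^{2n}$; since for $k > 2$ every non-degenerate $\mathbb{Z}_k$-invariant quadratic form has even cohomological index (see \cite[Remark 3.13]{GKPS}), and for $k = 2$ \autoref{lemma: direct sum cohomological index} applies unconditionally, we obtain $\ind(F_0 \oplus G_0) = \ind(F_0) + \ind(G_0)$ exactly. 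At $t = 1$, \autoref{lemma: direct sum cohomological index} yields $\ind(F_1 \oplus G_1) \geq \ind(F_1) + \ind(G_1) - 1$, with equality when $k = 2$ or when at least one of $\ind(F_1), \ind(G_1)$ is even; given that $\ind(F_0)$ and $\ind(G_0)$ are both even, this parity condition is equivalent to $\mu(\widetilde{\phi})$ or $\mu(\widetilde{\psi})$ being even. Subtracting the two estimates gives the desired inequalities.

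The main obstacle I anticipate is the construction of the composition-type generating function on the second half of the concatenation and the verification of its equivalence, in the sense of the paper, to $F_1 \oplus G_{2t-1}$. This requires adapting the classical composition formulas of generating-function theory to the $\mathbb{Z}_k$-equivariant conical setting, where one must keep track of conical homogeneity and of smoothness away from the origin while performing the fibre reduction that eliminates the auxiliary variable.
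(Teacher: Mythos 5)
Your strategy is recognizably the paper's — build a based family of conical generating functions for $\widetilde{\phi}\cdot\widetilde{\psi}$ from $F_t$ and $G_t$, compare endpoint indices with those of $F \oplus G$, and apply \autoref{lemma: direct sum cohomological index} — but the claim you single out as the crucial technical input is false, and the proof does not go through as stated.

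You assert that the composition-type generating function on the second half is related to the direct sum $F_1 \oplus G_{2t-1}$ by a fibre-preserving conical homeomorphism together with a stabilization, so that the two have equal cohomological index. No such equivalence can exist: two equivalent generating functions in the sense of the paper generate the same object, but $F_1 \oplus G_{2t-1}$ is a conical function over the base $\mathbb{R}^{2n} \times \mathbb{R}^{2n}$ generating the Cartesian product of the two graph Lagrangians in $T^{\ast}\mathbb{R}^{4n}$, whereas the composition $G_{2t-1} \,\sharp\, F_1$ is a conical function over the base $\mathbb{R}^{2n}$ generating the graph of $\Phi_1 \circ \Psi_{2t-1}$ in $T^{\ast}\mathbb{R}^{2n}$. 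What actually holds — and this is cited as \cite[Proposition 2.26]{GKPS} in the paper — is only a one-sided comparison: there is a linear $(\mathbb{Z}_k \times \mathbb{R}_{>0})$-equivariant injection $\iota$ with $(F\,\sharp\,G)\circ\iota = F \oplus G$, so by monotonicity of the cohomological index
$$
\ind(F_1 \oplus G_1) \;\leq\; \ind(G_1 \,\sharp\, F_1)\,,
$$
and equality does not hold in general.

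Fortunately this inequality goes in the direction you need, because the $t=1$ term enters $\mu(\widetilde{\phi}\cdot\widetilde{\psi}) = \ind(H_0) - \ind(H_1)$ with a minus sign; replacing your claimed equality $\ind(H_1) = \ind(F_1 \oplus G_1)$ by the inequality $\ind(H_1) \geq \ind(F_1 \oplus G_1)$ gives
$$
\mu(\widetilde{\phi}\cdot\widetilde{\psi}) \leq \ind(F_0) + \ind(G_0) - \ind(F_1 \oplus G_1)\,,
$$
and \autoref{lemma: direct sum cohomological index} then yields the desired bounds with the correct parity discussion, exactly as in the paper. Your treatment of the $t=0$ endpoint is fine: $\ind(G_0 \,\sharp\, F_0) = \ind(F_0) + \ind(G_0)$ holds exactly (the paper cites \cite[Lemma 4.2]{GKPS}; your argument via \autoref{lemma: direct sum cohomological index} plus evenness of $\ind(F_0)$, $\ind(G_0)$ for $k>2$, cf.\ \cite[Remark 3.13]{GKPS}, also works). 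Finally, the concatenation into two halves is an unnecessary complication: by \cite[Proposition 2.10 and Remark 2.14]{GKPS}, the family $H_t = G_t \,\sharp\, F_t$ for $t \in [0,1]$ is already a based family of conical generating functions for $\{\phi_t \circ \psi_t\}_{t\in[0,1]}$, which represents $\widetilde{\phi}\cdot\widetilde{\psi}$.
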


Before proving \autoref{proposition: triangle inequality},
recall (\cite[Proposition 2.10]{GKPS})
that if $F \colon \mathbb{R}^{2n} \times \mathbb{R}^{2nN_1} \to \mathbb{R}$ 
and $G \colon \mathbb{R}^{2n} \times \mathbb{R}^{2nN_2} \to \mathbb{R}$
are conical generating functions for conical symplectomorphisms 
$\Phi$ and $\Psi$ respectively,
then the function  $F \, \sharp \, G \colon
\mathbb{R}^{2n} \times (\mathbb{R}^{2n} \times \mathbb{R}^{2n} \times \mathbb{R}^{2nN_1} \times \mathbb{R}^{2nN_2})
\rightarrow \mathbb{R}$
defined by
$$
F \, \sharp \, G \, (q; \zeta_1, \zeta_2, \nu_1,\nu_2) 
= F (\zeta_1, \nu_1) + G (\zeta_2, \nu_2) - 2 \left< \zeta_2 - q, i (\zeta_1- q) \right>
$$
is a conical generating function of $\Psi \circ \Phi$,
and (\cite[Proposition 2.26]{GKPS})
there is a linear $(\mathbb{Z}_k \times \mathbb{R}_{>0})$-equivariant injection
$$
\iota \colon \mathbb{R}^{2n} \times \mathbb{R}^{2n} \times \mathbb{R}^{2nN_1} \times \mathbb{R}^{2nN_2}
\rightarrow \mathbb{R}^{2n} \times \mathbb{R}^{2n} \times \mathbb{R}^{2n} \times \mathbb{R}^{2nN_1} \times \mathbb{R}^{2nN_2}
$$
such that $(F \, \sharp \, G) \circ \, \iota = F \oplus G$.
Since the cohomological index is monotone,
i.e. $\ind (A) \leq \ind (B)$ if $A \subset B$
\cite[Proposition 3.9 (i)]{GKPS},
we deduce that
\begin{equation}\label{equation: inclusion in triangle inequality}
\ind (F \oplus G) \leq \ind (F \, \sharp \, G) \,.
\end{equation}
Recall also (\cite[Lemma 4.2]{GKPS})
that if $F$ and $G$ are equivalent to the zero function
then
$$
\ind (F \, \sharp \, G) = \ind (F) + \ind(G) \,.
$$

\begin{proof}[Proof of \autoref{proposition: triangle inequality}]
Let $F_t$ and $G_t$ be based families
of conical generating functions
for contact isotopies $\{\phi_t\}_{t \in [0, 1]}$ and $\{\psi_t\}_{t \in [0, 1]}$
representing $\widetilde{\phi}$ and $\widetilde{\psi}$ respectively.
Then, by \cite[Proposition 2.10 and Remark 2.14]{GKPS},
$G_t \,\sharp\, F_t$ is a based family of conical generating functions
for $\{ \phi_t \circ \psi_t \}_{t \in [0, 1]}$,
and thus
$$
\mu ( \widetilde{\phi} \cdot \widetilde{\psi} )
= \ind (G_0 \,\sharp\, F_0) - \ind (G_1 \,\sharp\, F_1) \,.
$$
Since $F_0$ and $G_0$ are equivalent to the zero function,
we have $\ind (G_0 \,\sharp\, F_0) = \ind (G_0) + \ind (F_0)$.
Moreover, by \eqref{equation: inclusion in triangle inequality} we have
$$
- \ind (G_1 \,\sharp\, F_1) \leq - \ind (G_1 \oplus F_1) \,.
$$
Using \autoref{lemma: direct sum cohomological index}
we thus deduce that
$$
\mu ( \widetilde{\phi} \cdot \widetilde{\psi} ) \leq 
\ind (F_0) + \ind (G_0) - \ind (F_1) - \ind (G_1) + 1
= \mu ( \widetilde{\phi} ) + \mu ( \widetilde{\psi} ) + 1 \,,
$$
and that
$$
\mu ( \widetilde{\phi} \cdot \widetilde{\psi} ) \leq  \mu ( \widetilde{\phi} ) + \mu ( \widetilde{\psi} )
$$
if either $k = 2$ or $\ind (F_1)$ is even or $\ind (G_1)$ is even,
hence
(since, for $k > 2$, $\ind (F_0)$ and $\ind (G_0)$ are even
by \cite[Remark 3.13]{GKPS})
if either $k = 2$ or $\mu ( \widetilde{\phi} )$ is even or $\mu ( \widetilde{\psi} )$ is even.
\end{proof}

It is proved in \cite{Giv90}
that
$$
\mu ( \widetilde{\phi} \, \cdot \, \widetilde{\psi} )
= \mu ( \widetilde{\phi}) + \mu ( \widetilde{\psi})
$$
if either $\mathcal{L} (\widetilde{\phi})$ or $\mathcal{L} (\widetilde{\psi})$
are in $\pi_1 \big(\Cont_0 (\mathbb{S}^{2n-1}, \bar{\xi}_0) \big)$.
For our applications we only need this property
in the case when one of the factors is the Reeb flow.

\begin{prop}\label{proposition: concatenation}
For every element $\widetilde{\phi}$ of $\widetilde{\Cont_0} (L_k^{2n-1}, \xi_0)$
and every integer $m$ we have
\begin{equation*}
\mu ( \widetilde{\phi} \, \cdot \, \widetilde{r_{2 \pi m}} )
= \mu (\widetilde{r_{2 \pi m}} \, \cdot \, \widetilde{\phi}) = 2n m + \mu (\widetilde{\phi}) \,.
\end{equation*}
\end{prop}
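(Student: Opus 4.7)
The plan is to deduce \autoref{proposition: concatenation} directly from the triangle inequality proved in \autoref{proposition: triangle inequality}, combined with the computation $\mu(\widetilde{r_{2\pi m}}) = 2nm$ from \autoref{example: identity etc}. The decisive feature is that $\mu(\widetilde{r_{2\pi m}}) = 2nm$ is always even, regardless of the parity of $k$, $n$, or $m$, so the strong form of the triangle inequality (without the extra $+1$) applies whenever one factor is a Reeb flow of this type.

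First, I would apply \autoref{proposition: triangle inequality} directly to the product $\widetilde{\phi} \cdot \widetilde{r_{2\pi m}}$, using that $\mu(\widetilde{r_{2\pi m}}) = 2nm$ is even, to obtain
\[
\mu(\widetilde{\phi} \cdot \widetilde{r_{2\pi m}}) \leq \mu(\widetilde{\phi}) + 2nm.
\]
For the reverse inequality, I would write $\widetilde{\phi} = (\widetilde{\phi} \cdot \widetilde{r_{2\pi m}}) \cdot \widetilde{r_{-2\pi m}}$ and again apply \autoref{proposition: triangle inequality}, noting that $\mu(\widetilde{r_{-2\pi m}}) = -2nm$ is likewise even, to obtain
\[
\mu(\widetilde{\phi}) \leq \mu(\widetilde{\phi} \cdot \widetilde{r_{2\pi m}}) + \mu(\widetilde{r_{-2\pi m}}) = \mu(\widetilde{\phi} \cdot \widetilde{r_{2\pi m}}) - 2nm.
\]
Combining the two inequalities yields $\mu(\widetilde{\phi} \cdot \widetilde{r_{2\pi m}}) = \mu(\widetilde{\phi}) + 2nm$, as required.

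The argument for $\mu(\widetilde{r_{2\pi m}} \cdot \widetilde{\phi})$ is entirely symmetric: \autoref{proposition: triangle inequality} gives $\mu(\widetilde{r_{2\pi m}} \cdot \widetilde{\phi}) \leq 2nm + \mu(\widetilde{\phi})$, while writing $\widetilde{\phi} = \widetilde{r_{-2\pi m}} \cdot (\widetilde{r_{2\pi m}} \cdot \widetilde{\phi})$ and using the even-parity form of the triangle inequality gives the reverse bound. There is no real obstacle here: once \autoref{proposition: triangle inequality} is in hand and the parity of $\mu(\widetilde{r_{2\pi m}})$ is noted, the proof is immediate. The only point that deserves attention is to record carefully that the even-parity hypothesis in \autoref{proposition: triangle inequality} is satisfied both when one multiplies by $\widetilde{r_{2\pi m}}$ and when one multiplies by $\widetilde{r_{-2\pi m}}$, so that no spurious $+1$ error term creeps into either inequality.
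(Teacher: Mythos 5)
Your proposal is correct, and it takes a genuinely different route from the paper's. The paper's proof is a direct generating-function computation: it represents $\widetilde{\phi}\cdot\widetilde{r_{2\pi m}}$ by a concatenated isotopy, chooses a based family of conical generating functions whose restriction to the first half is a family of quadratic forms generating $\{r_{4\pi m t}\}$, and uses the fact (via \cite[Lemma 4.10]{GKPS}) that the quadratic form at the midpoint $t=\tfrac12$ is equivalent to the zero function, because the Reeb flow up to $2\pi m$ lifts to a contractible loop on $\mathbb{S}^{2n-1}$. The difference of cohomological indices then splits exactly as a sum of the two Maslov contributions. Your proof instead sandwiches the equality using \autoref{proposition: triangle inequality} in both directions, exploiting that $\mu(\widetilde{r_{\pm 2\pi m}}) = \pm 2nm$ is always even so the strong form (no $+1$ error) applies to both $\widetilde{\phi}\cdot\widetilde{r_{2\pi m}}$ and $(\widetilde{\phi}\cdot\widetilde{r_{2\pi m}})\cdot\widetilde{r_{-2\pi m}}$. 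This is logically valid, shorter, and avoids re-engaging the generating-function machinery; the tradeoff is that it leans entirely on the parity dichotomy in the triangle inequality, whereas the paper's computation is unconditional and is the one that generalizes to the statement (alluded to just before the proposition) that $\mu$ is additive whenever one factor lifts to a loop on the sphere, not only for Reeb iterates. One small simplification you could make: rather than arguing the two orderings $\widetilde{\phi}\cdot\widetilde{r_{2\pi m}}$ and $\widetilde{r_{2\pi m}}\cdot\widetilde{\phi}$ ``symmetrically,'' note as the paper does that $\widetilde{r_{2\pi m}}$ represents a loop in $\Cont_0$ and is therefore central in $\widetilde{\Cont_0}$, so the two products coincide and only one equality needs to be proved.
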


\begin{proof}
Since $\widetilde{\phi} \, \cdot \, \widetilde{r_{2 \pi m}}
= \widetilde{r_{2 \pi m}} \, \cdot \, \widetilde{\phi}$,
it is enough to prove that
$\mu ( \widetilde{\phi} \, \cdot \, \widetilde{r_{2 \pi m}} )
= 2n m + \mu (\widetilde{\phi}) $.
We represent $\widetilde{\phi} \,\cdot\, \widetilde{r_{2 \pi m}}$
by the concatenation
$$
\{\varphi_t\}_{t \in [0, 1]} = \{ r_{4 \pi m t} \}_{t \in [0,\frac{1}{2}]} \sqcup \{ \phi_{2t-1} \}_{t \in [\frac{1}{2}, 1]} \,,
$$
where $\{ \phi_t \}_{t \in [0,1]}$ is a contact isotopy representing $\widetilde{\phi}$,
and we consider a based family $F_t$, $t \in [0, 1]$,
of conical generating functions for $\{\varphi_t\}_{t \in [0, 1]}$
so that $F_t$, $t \in [0, \frac{1}{2}]$, is a family of quadratic forms
generating $\{ r_{4 \pi m t} \}_{t \in [0, \frac{1}{2}]}$
(cf.\ \cite[Proposition 4.9]{GKPS}).
Since the lift of $\{ r_{4 \pi m t} \}_{t \in [0, \frac{1}{2}]}$
to $(\mathbb{S}^{2n-1}, \bar{\xi}_0)$
is a loop,
by \cite[Lemma 4.10]{GKPS} the quadratic form $F_{\frac{1}{2}}$
is equivalent to the zero function,
and so
$$
\mu ( \{\phi_t\}_{t \in [0, 1]} ) =  \ind ( F_{\frac{1}{2}} ) - \ind (F_1) \,.
$$
We thus have
$$
\mu (\widetilde{\phi} \, \cdot \, \widetilde{r_{2 \pi m}})
= \ind (F_0) - \ind (F_1)
= \ind (F_0) - \ind ( F_{\frac{1}{2}} ) + \ind ( F_{\frac{1}{2}} ) - \ind (F_1)
$$
$$
= \mu (\widetilde{r_{2 \pi m}}) + \mu (\widetilde{\phi})
= 2nm + \mu (\{ \phi_t \}_{t \in [0, 1]}) \,,
$$
where the last equality follows from \autoref{example: identity etc}.
\end{proof}

In the next section we also need the following result.

\begin{prop}\label{proposition: upper semi-continuous}
For every $\widetilde{\phi} \in \widetilde{\Cont_0} (L_k^{2n-1}, \xi_0)$
and every $T_0 \in \mathbb{R}$
there exists $\epsilon > 0$
such that 
$\mu ( \widetilde{r_{-T}} \cdot \widetilde{\phi} )
= \mu ( \widetilde{r_{-T_0}} \cdot \widetilde{\phi} )$
for every $T \in [T_0, T_0 + \epsilon)$.
\end{prop}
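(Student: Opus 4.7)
The plan is to combine a concrete construction of generating functions with the continuity of Čech cohomology on decreasing families of compact sets. Using the identity $\widetilde{r_{-T}} \cdot \widetilde{\phi} = \widetilde{r_{-(T-T_0)}} \cdot (\widetilde{r_{-T_0}} \cdot \widetilde{\phi})$, I would first replace $\widetilde{\phi}$ by $\widetilde{r_{-T_0}} \cdot \widetilde{\phi}$ to reduce to the case $T_0 = 0$: it suffices to find $\epsilon > 0$ such that $\mu(\widetilde{r_{-s}} \cdot \widetilde{\phi}) = \mu(\widetilde{\phi})$ for $s \in [0, \epsilon)$. Assuming $k$ prime (the general case follows by the pullback construction of \autoref{proposition: nonlinear Maslov index general k}), fix a based family $F_t : \mathbb{R}^{2n} \times \mathbb{R}^{2nN} \to \mathbb{R}$ of conical generating functions for a contact isotopy $\{\phi_t\}$ representing $\widetilde{\phi}$. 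For $s$ in a right-neighborhood $[0, \epsilon_0)$ of $0$, the lifted linear rotation $R_{-s}$ of $\mathbb{R}^{2n}$ admits an explicit $\mathbb{Z}_k$-invariant quadratic conical generating function $K_{-s} : \mathbb{R}^{2n} \to \mathbb{R}$ with $K_0 \equiv 0$ (obtained in the midpoint representation). Setting $G^s_t := F_t \,\sharp\, K_{-st}$, \cite[Propositions 2.10 and 2.14]{GKPS} yield a based family of conical generating functions for the contact isotopy $\{r_{-st} \circ \phi_t\}_{t \in [0,1]}$, which represents $\widetilde{r_{-s}} \cdot \widetilde{\phi}$. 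Since $G^s_0 = F_0 \,\sharp\, K_0$ is independent of $s$, the formula $\mu(\widetilde{r_{-s}} \cdot \widetilde{\phi}) = \ind(G^s_0) - \ind(G^s_1)$ reduces the problem to showing that $\ind(G^s_1)$ is right-constant at $s = 0$.

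The crucial point is that $K_{-s}$ may be chosen pointwise non-increasing in $s$ on $[0, \epsilon_0)$, which reflects the non-positivity of the Hamiltonian of $\{r_{-s}\}_{s \geq 0}$. Inspecting the explicit formula for $\sharp$ recalled before \autoref{proposition: triangle inequality}, only the $K_{-s}(\zeta_2)$ summand of $G^s_1$ depends on $s$, so the induced conical function $g^s$ on a higher-dimensional lens space $L_k^{2M-1}$ is itself pointwise non-increasing in $s$ on $[0, \epsilon_0)$. Consequently the closed sublevel sets $\Sigma_s := \{g^s \leq 0\}$ form a non-decreasing family of compact subsets with $\Sigma_0 = \bigcap_{s > 0} \Sigma_s$ (by continuity of $g^s$ in $s$), and monotonicity of the cohomological index (\cite[Proposition 3.9 (i)]{GKPS}) gives $\ind(\Sigma_s) \geq \ind(\Sigma_0)$ on $[0, \epsilon_0)$.

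To finish, I would invoke the continuity of Čech cohomology on decreasing sequences of compact subsets: $\check{H}^*(\Sigma_0; \mathbb{Z}_k) = \varinjlim_{s \searrow 0} \check{H}^*(\Sigma_s; \mathbb{Z}_k)$. Since $\check{H}^*(L_k^{2M-1}; \mathbb{Z}_k)$ is finite-dimensional, the image of the restriction $\check{H}^*(L_k^{2M-1}; \mathbb{Z}_k) \to \check{H}^*(\Sigma_s; \mathbb{Z}_k)$ stabilizes as $s \searrow 0$ at a subspace of dimension $\ind(\Sigma_0)$; combined with the reverse inequality, this yields $\ind(\Sigma_s) = \ind(\Sigma_0)$ for all $s \in [0, \epsilon)$ with $\epsilon > 0$ small enough, whence $\mu(\widetilde{r_{-s}} \cdot \widetilde{\phi}) = \mu(\widetilde{\phi})$ on that interval. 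The main technical hurdle will be the explicit construction of the family $K_{-s}$ together with the verification that its pointwise monotonicity — a direct consequence of the sign of the Hamiltonian of the Reeb flow — carries over to $G^s_1$, and that the stabilized $G^s_0$ remains equivalent to the zero function; once this is in place, the Čech-cohomological step is essentially routine.
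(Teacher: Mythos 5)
Your argument is correct and reaches the same conclusion, but the implementation differs from the paper's in two respects, one of substance and one of presentation. First, to produce a one-parameter family of based conical generating functions for $\widetilde{r_{-T}} \cdot \widetilde{\phi}$, the paper simply fixes a single based family for the concatenation $\{\phi_{2t}\}_{t\in[0,\frac12]}\sqcup\{r_{-(2t-1)(T_0+\epsilon')}\}_{t\in[\frac12,1]}$ and varies the endpoint time, invoking only the existence and monotonicity statements of \cite[Prop.\ 2.14 and 2.23]{GKPS}; you instead build $G^s_t = F_t\,\sharp\,K_{-st}$ from an explicitly chosen quadratic family $K_{-s}$, which works but forces you to verify that $K_{-s}$ is pointwise non-increasing in $s$, that this monotonicity propagates through the $\sharp$-formula, and that $F_0\,\sharp\,K_0$ is still equivalent to zero (all true, and you flag them, but they are extra bookkeeping the paper sidesteps). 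Second, for the stability of the index, the paper cites the continuity property of the cohomological index \cite[Prop.\ 3.9(ii)]{GKPS} directly (every closed set $A$ has a neighborhood $\mathcal U$ with $\ind(\mathcal V)=\ind(A)$ whenever $A\subset\mathcal V\subset\mathcal U$) together with a compactness argument showing the nested sublevel sets eventually sit inside $\mathcal U$; you rederive essentially this lemma from the continuity axiom of \v Cech cohomology on the decreasing compact family $\{\Sigma_s\}$, using finite-dimensionality of $\check H^*(L_k^{2M-1};\mathbb{Z}_k)$ to force the kernel to stabilize at a finite stage. This is a clean and more self-contained argument (one minor imprecision: the images $\im\bigl(\check H^*(L_k^{2M-1})\to \check H^*(\Sigma_s)\bigr)$ live in different groups, so it is their \emph{dimensions} that stabilize, not the subspaces themselves), but it duplicates a fact that \cite{GKPS} already records, and the paper's route is correspondingly shorter. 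In summary: same skeleton, different flesh; the paper leans harder on cited lemmas, your version is more elementary and explicit at the cost of some added verification.
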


Before proving \autoref{proposition: upper semi-continuous},
recall that the cohomological index is continuous
(\cite[Proposition 3.9 (ii)]{GKPS}):
every closed subset $A$ of $L^{2M-1}_k$
has a neighborhood $\mathcal{U}$ such that 
if $A \subset \mathcal{V} \subset \mathcal{U}$
then $\ind(\mathcal{V}) = \ind(A)$.

\begin{proof}[Proof of \autoref{proposition: upper semi-continuous}]
Let $\{ \phi_t \}_{t \in [0, 1]}$ be a contact isotopy
representing $\widetilde{\phi}$,
and let $F_t$, $t \in [0, 1]$,
be a based family of conical generating functions
for the concatenation
$$
\{ \phi_{2t} \}_{t \in [0, \frac{1}{2}]} \sqcup \{ r_{- (2t-1)(T_0 + \epsilon')} \}_{t \in [\frac{1}{2}, 1]}
$$
for some $\epsilon' > 0$.
Then for every $T \in [T_0, T_0 + \epsilon')$ we have
$$
\mu (\widetilde{r_{-T}} \cdot \widetilde{\phi})
= \ind \big( \{ f_0 \leq 0 \} \big)
- \ind \Big( \Big\{ f_{\frac{1}{2} \big(\frac{T}{T_0 + \epsilon'} + 1 \big)} \leq 0 \Big\} \Big) \,.
$$
By monotonicity of generating functions
(\cite[Proposition 2.23]{GKPS})
we can assume that $\frac{df_t}{dt} \leq 0$ for all $t \in [\frac{1}{2}, 1]$,
and so
$$
\Big\{ f_{\frac{1}{2} (\frac{T}{T_0 + \epsilon'} + 1)} \leq 0 \Big\} 
\subset \Big\{ f_{\frac{1}{2} (\frac{T'}{T_0 + \epsilon'} + 1)} \leq 0 \Big\} 
$$
for $T, T' \in [ T_0, T_0 + \epsilon' )$ with $T \leq T'$.
By continuity of the cohomological index,
there is a neighborhood $\mathcal{U}$
of $\big\{ f_{\frac{1}{2} \big( \frac{T_0}{T_0 + \epsilon'} + 1 \big)} \leq 0 \big\} $
such that if
$\big\{ f_{\frac{1}{2} \big(\frac{T_0}{T_0 + \epsilon'} + 1 \big)} \leq 0 \big\}
\subset \mathcal{V} \subset \mathcal{U}$
then
$$
\ind (\mathcal{V}) = 
\ind \Big( \Big\{ f_{\frac{1}{2} \big(\frac{T_0}{T_0 + \epsilon'} + 1 \big)} \leq 0 \Big\} \Big) \,.
$$
For every $T \in [ T_0, T_0 + \epsilon)$
with $\epsilon < \epsilon'$ small enough
we have
$$
\Big\{ f_{\frac{1}{2} \big( \frac{T_0}{T_0 + \epsilon'} + 1 \big)} \leq 0 \Big\}
\subset \Big\{ f_{\frac{1}{2} \big( \frac{T}{T_0 + \epsilon'} + 1 \big)} \leq 0 \Big\}
\subset \mathcal{U} \,,
$$
and so
$$
\ind \Big( \Big\{ f_{\frac{1}{2} \big(\frac{T}{T_0 + \epsilon'} + 1 \big)} \leq 0 \Big\} \Big)
= \ind \Big( \Big\{ f_{\frac{1}{2} \big(\frac{T_0}{T_0 + \epsilon'} + 1 \big)} \leq 0 \Big\} \Big) \,,
$$
i.e. $\mu (\widetilde{r_{-T}} \cdot \widetilde{\phi}) = \mu (\widetilde{r_{-T_0}} \cdot \widetilde{\phi})$
as we wanted.
\end{proof}

It is proved in \cite[Proposition 4.21]{GKPS}
that if $\{\phi_t\}_{t \in [0, 1)}$ is a non-negative
(respectively non-positive) contact isotopy
then $\mu ( [ \{ \phi_t \}_{t \in [0, 1)} ] ) \geq 0$
(respectively $\mu ( [ \{ \phi_t \}_{t \in [0, 1)} ] ) \leq 0$).
We actually have the following result.

\begin{prop}\label{proposition: monotonicity Maslov}
If $\widetilde{\phi} \leq \widetilde{\psi}$
then $\mu (\widetilde{\phi}) \leq \mu (\widetilde{\psi})$. 
\end{prop}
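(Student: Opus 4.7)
The plan is to combine non-negativity of the contact isotopy representing $\widetilde{\psi} \cdot \widetilde{\phi}^{-1}$ with monotonicity of generating functions (\cite[Proposition 2.23]{GKPS}) and with monotonicity of the cohomological index. Write $\widetilde{\psi} = \widetilde{\eta} \cdot \widetilde{\phi}$, where $\widetilde{\eta} = \widetilde{\psi} \cdot \widetilde{\phi}^{-1}$ is represented by a non-negative contact isotopy $\{\eta_t\}_{t \in [0, 1]}$, and pick any contact isotopy $\{\phi_t\}_{t \in [0, 1]}$ representing $\widetilde{\phi}$. Observe that $\{\eta_t \circ \phi_1\}_{t \in [0, 1]}$ has the same contact Hamiltonian with respect to $\alpha_0$ as $\{\eta_t\}$ (right composition by the fixed contactomorphism $\phi_1$ does not alter the Hamiltonian), and hence is also non-negative. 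After reparametrization and smoothing, $\widetilde{\psi}$ is therefore represented by the concatenation
\[
\{\varphi_t\}_{t \in [0, 1]}
= \{\phi_{2t}\}_{t \in [0, \frac{1}{2}]}
\,\sqcup\, \{\eta_{2t - 1} \circ \phi_1\}_{t \in [\frac{1}{2}, 1]} \,.
\]

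Next I will construct a based family $G_t$, $t \in [0, 1]$, of conical generating functions for $\{\varphi_t\}_{t \in [0, 1]}$. Its restriction to $t \in [0, \frac{1}{2}]$ is then a based family of conical generating functions for a contact isotopy representing $\widetilde{\phi}$, while its restriction to $t \in [\frac{1}{2}, 1]$ generates a non-negative contact isotopy. By definition of the non-linear Maslov index,
\[
\mu(\widetilde{\phi}) = \ind(G_0) - \ind(G_{\frac{1}{2}}) \quad \text{and} \quad \mu(\widetilde{\psi}) = \ind(G_0) - \ind(G_1) \,.
\]
By monotonicity of generating functions applied as in the proof of \autoref{proposition: upper semi-continuous}, but with the opposite sign convention corresponding to a non-negative rather than a non-positive Hamiltonian, the family $G_t$ can be chosen so that $\frac{dg_t}{dt} \geq 0$ for $t \in [\frac{1}{2}, 1]$. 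Then $\{g_1 \leq 0\} \subset \{g_{\frac{1}{2}} \leq 0\}$, so that $\ind(G_1) \leq \ind(G_{\frac{1}{2}})$ by monotonicity of the cohomological index (\cite[Proposition 3.9 (i)]{GKPS}); subtracting, $\mu(\widetilde{\psi}) \geq \mu(\widetilde{\phi})$.

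The main technical point to verify is the sign convention in the monotonicity of generating functions: a non-negative contact Hamiltonian yields a pointwise non-decreasing family of generating functions, hence non-increasing sublevel sets $\{g_t \leq 0\}$. This is exactly the reverse of the situation handled in the proof of \autoref{proposition: upper semi-continuous}, where the Reeb flow is run in the negative direction, and it is consistent with the special case $\widetilde{\phi} = \widetilde{\id}$ of the present proposition, which recovers the inequality $\mu \geq 0$ for elements represented by non-negative contact isotopies established in \cite[Proposition 4.21]{GKPS}.
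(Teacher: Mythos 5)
Your proof is correct and takes essentially the same route as the paper's: both represent $\widetilde{\psi}$ by concatenating a path for $\widetilde{\phi}$ on $[0,\frac{1}{2}]$ with a non-negative tail on $[\frac{1}{2},1]$, then invoke monotonicity of generating functions (\cite[Proposition 2.23]{GKPS}) to arrange $G_1 \geq G_{\frac{1}{2}}$ and monotonicity of the cohomological index to conclude $\ind(G_1) \leq \ind(G_{\frac{1}{2}})$. The only difference is cosmetic: you spell out why the tail $\{\eta_{2t-1}\circ\phi_1\}$ is non-negative (unchanged Hamiltonian under right composition), whereas the paper simply asserts the existence of the non-negative concatenating piece $\{\chi_t\}$.
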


\begin{proof}
Assume that $\widetilde{\phi} \leq \widetilde{\psi}$.
Then $\widetilde{\psi}$ can be represented by the concatenation
$$
\{\psi_t\}_{t \in [0, 1]} = \{\phi_{2t}\}_{t \in [0, \frac{1}{2}]} \sqcup \{\chi_{2t-1}\}_{t \in [\frac{1}{2}, 1]}
$$
of a contact isotopy $\{\phi_t\}_{t \in [0, 1]}$ representing $\widetilde{\phi}$
and a non-negative contact isotopy $\{\chi_t\}_{t \in [0, 1]}$.
Let $F_t: \mathbb{R}^{2n} \times \mathbb{R}^{2nN} \rightarrow \mathbb{R}$
be a based family of generating functions
for $\{\psi_t\}_{t \in [0, 1]}$.
By monotonicity of generating function
(\cite[Proposition 2.23]{GKPS})
we can assume that $F_1 \geq F_{\frac{1}{2}}$and so,
by monotonicity of the cohomological index
(\cite[Proposition 3.9 (i)]{GKPS}),
$\ind (F_1) \leq \ind (F_{\frac{1}{2}})$.
We thus have
$$
\mu (\widetilde{\psi}) = \mu (\widetilde{\phi}) + \ind (F_{\frac{1}{2}}) - \ind (F_1)
\geq \mu (\widetilde{\phi}) \,.
$$
\end{proof}

\begin{rmk}\label{remark: semi-continuous}
It follows from \autoref{proposition: upper semi-continuous}
and \autoref{proposition: monotonicity Maslov}
that the map $T \mapsto \mu ( \widetilde{r_{-T}} \cdot \widetilde{\phi} )$
is lower semi-continuous,
i.e. $\{ T \in \mathbb{R} \,\lvert\,  \mu ( \widetilde{r_{-T}} \cdot \widetilde{\phi} ) \leq y \}$
is closed for every $y \in \mathbb{R}$.
\end{rmk}

If $F_t$ is a based family of conical generating functions
for a contact isotopy $\{\phi_t\}_{t \in [0, 1]}$ of $(L_k^{2n-1}, \xi_0)$
then for every $t$ there is a 1--1 correspondence
between the critical points of critical value zero of $f_t$
and the discriminant points of $\phi_t$
that lift to discriminant points of $\bar{\phi}_t$,
where $\{ \bar{\phi}_t \}_{t \in [0, 1]}$
is the lift of $\{\phi_t\}_{t \in [0, 1]}$ to $(\mathbb{S}^{2n-1}, \bar{\xi}_0)$.
Since the non-linear Maslov index $\mu ( \{\phi_t\}_{t \in [0, 1]} )$
counts, with multiplicity given by the change in the cohomological index
of the sublevel sets $\{ f_t \leq 0 \}$,
the critical points of $f_t$ with critical value zero
as $t$ varies in $[0, 1]$,
its value is related to the presence of discriminant points of $\bar{\phi}_t$
for $t \in [0, 1]$.
More precisely,
we have the following result
(\cite[Theorem 1.4 (iii)]{GKPS}).

\begin{prop}\label{proposition: relation with discriminant points}
Let $\{\phi_t\}_{t \in [0, 1]}$ be a contact isotopy of $(L_k^{2n-1}, \xi_0)$
starting at the identity,
and let $[t_0, t_1]$ be a subinterval of $[0, 1]$.
If $\mu ( \{ \phi_t \}_{t \in [0, t_0]} ) \neq \mu ( \{ \phi_t \}_{t \in [0, t_1]} )$
then there is $\underline{t} \in [t_0, t_1]$
such that $\bar{\phi}_{\underline{t}}$ has discriminant points.
If moreover\footnote{In \cite[Theorem 1.4 (iii)]{GKPS} it is assumed
that there is only one $\underline{t} \in [0, 1]$
such that $\bar{\phi}_{\underline{t}}$ has discriminant points.
However,
the proof works in the same way
also if we only assume that
$s \mapsto \mu ( \{ \phi_t \}_{t \in [0, s]} )$
is constant on $[t_0, \underline{t})$ and on $(\underline{t}, t_1]$.
Similarly for \autoref{proposition: relation with discriminant points stronger}
below.}
the map $s \mapsto \mu ( \{ \phi_t \}_{t \in [0, s]} )$
is constant on $[t_0, \underline{t})$ and on $(\underline{t}, t_1]$
and all the discriminant points of  $\bar{\phi}_{\underline{t}}$
are non-degenerate then
$$
\lvert \, \mu ( \{ \phi_t \}_{t \in [0, t_0]} ) -  \mu ( \{ \phi_t \}_{t \in [0, t_1]} ) \, \rvert \leq 1 \,.
$$
\end{prop}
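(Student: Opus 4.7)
The plan is to work with a based family $F_t : \mathbb{R}^{2M} \to \mathbb{R}$ (where $M = n(N+1)$) of conical generating functions for $\{\phi_t\}_{t\in[0,1]}$, with induced functions $f_t$ on $L_k^{2M-1}$. By definition,
\[
\mu(\{\phi_t\}_{t\in[0,s]}) = \ind(F_0) - \ind(F_s) = \ind(\{f_0 \leq 0\}) - \ind(\{f_s \leq 0\}),
\]
so the proposition reduces to tracking how $\ind(\{f_t \leq 0\})$ evolves with $t$, recalling that the critical points of $f_t$ at critical value $0$ correspond exactly to discriminant points of $\phi_t$ that lift to discriminant points of $\bar\phi_t$.

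For the first statement I would prove the contrapositive. If no $\bar\phi_t$ with $t \in [t_0,t_1]$ has a discriminant point, then no $f_t$ admits a critical point at value $0$ for $t$ in this range. By compactness of $f_t^{-1}(0)$ inside $L_k^{2M-1}$ and the joint $\mathcal{C}^1$-regularity of $(t,x) \mapsto f_t(x)$, one obtains a local uniform lower bound on $\|\nabla f_t\|$ on $f_t^{-1}(0)$. A standard gradient-push argument then yields, for $|t-t'|$ small, a homotopy equivalence $\{f_t \leq 0\} \simeq \{f_{t'} \leq 0\}$ compatible with the inclusions into $L_k^{2M-1}$, whence $\ind(\{f_t \leq 0\}) = \ind(\{f_{t'} \leq 0\})$ locally and, by connectedness of $[t_0,t_1]$, on the whole interval.

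For the second statement I would apply a parametric Morse lemma: the non-degeneracy of the discriminant points of $\bar\phi_{\underline{t}}$ translates into Morse non-degeneracy of the finitely many critical points of $f_{\underline{t}}$ at value $0$, and near each such point $f_t$ admits a standard quadratic normal form with critical value moving transversally through $0$ as $t$ crosses $\underline{t}$. The constancy hypothesis concentrates the entire variation of $\ind(\{f_t \leq 0\})$ on $[t_0,t_1]$ at the single time $\underline{t}$, and this variation is realised as a collection of simultaneous equivariant handle attachments on the sublevel set. Concluding that the total change of $\ind$ is at most $1$ then amounts to a Morse-theoretic computation using the continuity and monotonicity of $\ind$ (\cite[Proposition 3.9]{GKPS}) together with the behaviour of $\check H^*(\,\cdot\,;\mathbb{Z}_k)$ under equivariant cell attachments.

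The main obstacle is this last bound when $\bar\phi_{\underline{t}}$ has several non-degenerate discriminant points. The idea is to apply a small generic perturbation of $F_t$ localised in a neighborhood of $\underline{t}$ that separates the critical values, reducing the transition to a succession of single handle attachments, each changing $\ind$ by at most $1$. Since the endpoint values $\mu(\{\phi_t\}_{t \in [0,t_0]})$ and $\mu(\{\phi_t\}_{t \in [0,t_1]})$ are homotopy invariants preserved by such a perturbation, combining this with the original constancy hypothesis on both sides of $\underline{t}$ should restrict the algebraic sum of the individual jumps, and a careful bookkeeping using the cup-product structure of $\check H^*(L_k^{2M-1};\mathbb{Z}_k)$ should yield the claimed bound $|\mu(\{\phi_t\}_{t \in [0,t_0]}) - \mu(\{\phi_t\}_{t \in [0,t_1]})| \leq 1$. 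I expect this algebraic bookkeeping to be the delicate point of the argument, paralleling the computation carried out in \cite{GKPS} for the single-critical-point case.
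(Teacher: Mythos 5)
The paper does not reprove this proposition: it cites \cite[Theorem 1.4 (iii)]{GKPS}, adding only the footnote observation that the GKPS argument goes through under the weaker constancy hypothesis. Your proposal is therefore an outline of what such a proof should contain, and there is no detailed paper proof to compare against; but one can still evaluate the outline on its own terms.

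The treatment of the first assertion (constancy of $\mu$ in the absence of discriminant points of $\bar\phi_t$) is correct in approach. Working with a based family $F_t$ of conical generating functions, using that fibre-critical points of $F_t$ at critical value $0$ correspond to discriminant points of $\bar\phi_t$, a gradient-push argument keeping the zero level away from critical points does give local constancy of $\ind(\{f_t\leq 0\})$, hence of $\mu$. This matches the expected line.

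For the second assertion the proposal has a genuine gap, which you partly acknowledge but do not fill. Perturbing to separate the crossing times reduces the jump to a succession of single handle attachments, and each single cell attachment can change the cohomological index by at most $1$; but this only yields $\lvert\,\mu(\{\phi_t\}_{t\in[0,t_0]}) - \mu(\{\phi_t\}_{t\in[0,t_1]})\,\rvert \leq m$ where $m$ is the number of $\mathbb{Z}_k$-orbits of non-degenerate discriminant points of $\bar\phi_{\underline{t}}$, not $\leq 1$. The reference to ``cup-product bookkeeping'' names the place where more work is needed without carrying it out. Concretely, the cohomological index on $L_k^{2M-1}$ is a truncation-type invariant: because the generators satisfy $\alpha\cup\beta^j$ and $\beta^{j+1}=\widetilde\beta(\alpha\beta^j)$, the set of surviving degrees is downward closed, so $\ind(A)$ equals one plus the top surviving degree. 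Attaching a cell of degree $d$ increases $\ind$ by $1$ only when $d$ equals the current value of $\ind$; several simultaneous cells whose Morse indices form a consecutive run starting at $\ind(\{f_{\underline{t}}\leq -\epsilon\})$ would therefore allow $\ind$ to jump by the length of that run. Nothing in the proposal rules out such a staircase of Morse indices among the critical points at value $0$, and without an additional input about the Morse indices arising from a conical generating function of a contactomorphism (or some other structural constraint from \cite{GKPS}), the argument does not close. Also note that your claim that the perturbation preserves the constancy of $\mu$ on $[t_0,\underline{t})$ and on $(\underline{t},t_1]$ is not correct: after separating the crossings, $\mu$ jumps several times in a neighborhood of $\underline{t}$, so the constancy hypothesis fails for the perturbed family; only the two endpoint values are preserved, which is not enough to force cancellation.

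Finally, to avoid a possible misconception: the bound of $1$ in the non-degenerate case is \emph{not} a corollary of the general index bound in \autoref{proposition: relation with discriminant points stronger} applied to a finite critical set. A non-empty finite set has cohomological index $1$, and that proposition then gives only $\leq 2$ in general; so one really does need the separate Morse-theoretic argument for the non-degenerate case.
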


In particular,
it follows from \autoref{example: identity etc},
\autoref{proposition: concatenation}
and the first statement of \autoref{proposition: relation with discriminant points}
that for every real number $T$ we have
\begin{equation}\label{equation: Reeb flow}
\mu ( \widetilde{r_{T}} ) = 2n \left\lceil \frac{T}{2 \pi} \right\rceil \,.
\end{equation}

We also have the following result.

\begin{prop}\label{proposition: relation with discriminant points stronger}
Let $\{\phi_t\}_{t \in [0, 1]}$ be a contact isotopy of $(L_k^{2n-1}, \xi_0)$
starting at the identity,
and let $[t_0, t_1]$ be a subinterval of $[0, 1]$.
Assume that there is $\underline{t} \in [t_0, t_1]$
such that $\bar{\phi}_{\underline{t}}$ has discriminant points,
and denote by $\Delta (\phi_{\underline{t}}) \subset L_k^{2n-1}$
the set of discriminant points of $\phi_{\underline{t}}$.
Assume also that the map
$s \mapsto \mu ( \{ \phi_t \}_{t \in [0, s]} )$
is constant on $[t_0, \underline{t})$ and on $(\underline{t}, t_1]$.
Then
$$
\lvert \, \mu ( \{ \phi_t \}_{t \in [0, t_0]} ) -  \mu ( \{ \phi_t \}_{t \in [0, t_1]} ) \, \rvert
\leq \ind \big( \Delta (\phi_{\underline{t}}) \big) + 1 \,,
$$
and
$$
\lvert \, \mu ( \{ \phi_t \}_{t \in [0, t_0]} ) -  \mu ( \{ \phi_t \}_{t \in [0, t_1]} ) \, \rvert
\leq \ind \big( \Delta (\phi_{\underline{t}}) \big)
$$
if either $k = 2$
or if $\{\phi_t\}_{t \in [0, 1]}$ is negative
and $\mu ( \{ \phi_t \}_{t \in [0, t_0]} )$ is even.
\end{prop}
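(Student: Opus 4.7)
The plan is to upgrade the proof of \autoref{proposition: relation with discriminant points} by replacing its implicit ``jump by at most one'' count with a Lusternik--Schnirelmann-type bound on the jump of the cohomological index of the sublevel sets in terms of the cohomological index of the critical set at level zero. Let $F_t \colon \mathbb{R}^{2n} \times \mathbb{R}^{2nN} \to \mathbb{R}$ be a based family of conical generating functions for $\{\phi_t\}_{t \in [0, 1]}$ and $f_t \colon L_k^{2M-1} \to \mathbb{R}$ the induced functions. Since
\[
\mu\bigl(\{\phi_t\}_{t \in [0, s]}\bigr) = \ind(\{f_0 \leq 0\}) - \ind(\{f_s \leq 0\}) \,,
\]
the required inequality becomes a bound on the jump of $s \mapsto \ind(\{f_s \leq 0\})$ across $\underline{t}$. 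After subdividing $[t_0, t_1]$ at $\underline{t}$ and treating each half separately, and reparametrizing so that by monotonicity of generating functions (\cite[Proposition 2.23]{GKPS}) $\frac{df_t}{dt}$ has constant sign on a neighborhood of $\underline{t}$, we may assume that the sublevel sets $\{f_s \leq 0\}$ form a monotone nested family there.

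Next, identify the critical set. Set $K = \Crit(f_{\underline{t}}) \cap \{f_{\underline{t}} = 0\} \subset L_k^{2M-1}$. The correspondence between critical points of $f_{\underline{t}}$ of critical value $0$ and discriminant points of $\phi_{\underline{t}}$ that lift to discriminant points of $\bar\phi_{\underline{t}}$ exhibits $K$ as homeomorphic to a subset of $\Delta(\phi_{\underline{t}})$. By naturality of \v Cech cohomology under the stabilization inclusion $L_k^{2n-1} \hookrightarrow L_k^{2M-1}$ coming from the zero section of the fibre bundle, one deduces $\ind(K) \leq \ind(\Delta(\phi_{\underline{t}}))$. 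By continuity of the cohomological index (\cite[Proposition 3.9 (ii)]{GKPS}), choose an open neighborhood $U$ of $K$ in $L_k^{2M-1}$ with $\ind(U) = \ind(K)$.

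A standard Morse-theoretic deformation, using the Lipschitz gradient-like flow of $f_{\underline{t}}$ away from $K$ together with the time-monotonicity of $f_t$ near $\underline{t}$, yields, for $\delta > 0$ small enough, an inclusion
\[
\{f_{\underline{t}+\delta} \leq 0\} \subseteq \{f_{\underline{t}-\delta} \leq 0\} \cup U
\]
(when $\frac{df_t}{dt} \leq 0$, and the reverse otherwise). An approximate subadditivity estimate $\ind(A \cup U) \leq \ind(A) + \ind(U) + 1$ for the cohomological index on lens spaces---provable by the same ring-theoretic argument as \autoref{lemma: direct sum cohomological index}, with the $+1$ removable when $k = 2$ or when one of the two indices is even---then gives
\[
\bigl|\ind(\{f_{t_0} \leq 0\}) - \ind(\{f_{t_1} \leq 0\})\bigr| \leq \ind(K) + 1 \leq \ind(\Delta(\phi_{\underline{t}})) + 1 \,.
\]
For the final assertion, when $\{\phi_t\}_{t \in [0, 1]}$ is negative we can choose $f_t$ globally monotone, and the evenness of $\mu(\{\phi_t\}_{t \in [0, t_0]})$ translates via \cite[Remark 3.13]{GKPS} (which forces $\ind(F_0)$ to be even for $k > 2$) into evenness of $\ind(\{f_{t_0} \leq 0\})$, unlocking the sharper subadditivity.

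I expect the main obstacle to be establishing the subadditivity estimate for the cohomological index in the precise form needed and matching its parity hypotheses with those in the proposition; a secondary issue is to justify rigorously the equality $\ind(K) = \ind(K')$, where $K' \subseteq \Delta(\phi_{\underline{t}})$ is the image of $K$, since $K$ and $K'$ sit in lens spaces of different dimensions and the comparison must go through the naturality of \v Cech cohomology together with the specific form of the zero-section stabilization.
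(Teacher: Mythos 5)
Your overall strategy matches the paper's for the new case: track $\ind(\{f_t\leq 0\})$ across $\underline{t}$, bound its jump by $\ind$ of a neighborhood of the critical set via subadditivity of the cohomological index, and compare with $\ind\big(\Delta(\phi_{\underline{t}})\big)$. But there is a real gap in your first step. You "reparametrize so that $\frac{df_t}{dt}$ has constant sign on a neighborhood of $\underline{t}$" and then treat the sublevel sets as a nested family. Monotonicity of generating functions (\cite[Proposition 2.23]{GKPS}) only lets you choose $f_t$ monotone in $t$ when the contact isotopy itself is non-negative or non-positive; a reparametrization cannot produce this for a general isotopy, and the first inequality of the proposition makes no sign assumption on $\{\phi_t\}$. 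The correct move — the one used in \cite[Proposition 4.15]{GKPS}, which the paper simply cites for the first inequality and the $k=2$ case — is to freeze the time and compare levels of the single function $f_{\underline{t}}$: by continuity one has, for $t$ near $\underline{t}$ and small $\epsilon>0$, the sandwich $\{f_{\underline{t}}\le-\epsilon\}\subset\{f_t\le 0\}\subset\{f_{\underline{t}}\le\epsilon\}$, so the jump of $\ind(\{f_t\le 0\})$ is bounded by $\ind(\{f_{\underline{t}}\le\epsilon\})-\ind(\{f_{\underline{t}}\le-\epsilon\})$, and the Morse deformation is performed for the fixed $f_{\underline{t}}$. Your nested-family argument is legitimate in the case you need for the sharper inequality (negative isotopy), and there your parity bookkeeping ($\ind(F_0)$ even for $k>2$, hence $\ind(\{f_{t_0}\le 0\})$ even, hence — via constancy of $\mu$ on $[t_0,\underline{t})$ — the relevant sublevel index is even) agrees with the paper's.

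Two smaller points. The subadditivity estimate you flag as a main obstacle is already available: the cohomological index satisfies $\ind(A\cup B)\le\ind(A)+\ind(B)+1$ with the $+1$ removable when $k=2$ or one of the indices is even (\cite[Proposition 3.9 (iv)]{GKPS}); it is not a consequence of the direct-sum lemma you gesture at, which is a different statement. And your argument that $\ind(K)\le\ind\big(\Delta(\phi_{\underline{t}})\big)$ via "naturality under the zero-section stabilization" does not work as written: the fibre-critical points $(z,\nu)$ of $f_{\underline{t}}$ with critical value $0$ need not have $\nu=0$, so $K$ is not contained in the image of a zero section. The paper gets the needed equality $\ind(C)=\ind\big(\Delta(\phi_{\underline{t}})\big)$ directly from \cite[Proposition 2.22]{GKPS}, which compares the two through the identification provided by $i_F$ rather than through a linear inclusion.
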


Before proving \autoref{proposition: relation with discriminant points stronger},
recall that the cohomological index is subadditive
in the following sense (\cite[Proposition 3.9 (iv)]{GKPS}):
for any two closed subsets $A$ and $B$ of $L_k^{2M-1}$
we have
$$
\ind (A \cup B) \leq \ind (A) + \ind (B) + 1 \,,
$$
and
$$
\ind (A \cup B) \leq \ind (A) + \ind (B)
$$
if either $k = 2$ or $\ind (A)$ is even or $\ind (B)$ is even.

\begin{proof}[Proof of \autoref{proposition: relation with discriminant points stronger}]
The first inequality
and the second one in the case $k = 2$
are proved in \cite[Proposition 4.15]{GKPS}.
Suppose thus that $\{\phi_t\}_{t \in [0, 1]}$ is negative
and $\mu ( \{ \phi_t \}_{t \in [0, t_0]} )$ is even.
By monotonicity of generating functions
(\cite[Proposition 2.23]{GKPS}),
there is a based family of conical generating functions $F_t$
for $\{\phi_t\}_{t \in [0, 1]}$
with $\frac{df_t}{dt} \leq 0$.
The proof of \cite[Proposition 2.23]{GKPS}
actually shows that for every $t$
either $\frac{df_t}{dt} < 0$ or $\frac{df_t}{dt} = 0$.
We can thus assume that either 
$\{ f_{t_0} \leq 0 \} = \{ f_{\underline{t}} \leq 0 \}$
or $\{ f_{t_0} \leq 0 \}$ is included in the interior of $\{ f_{\underline{t}} \leq 0 \}$.
In the first case,
by continuity of the cohomological index
and since the map $s \mapsto \mu ( \{ \phi_t \}_{t \in [0, s]} )$
is constant on $(\underline{t}, t_1]$
we have $\mu ( \{ \phi_t \}_{t \in [0, t_0]} ) =  \mu ( \{ \phi_t \}_{t \in [0, t_1]} )$,
which implies the desired inequality
$$
\lvert \, \mu ( \{ \phi_t \}_{t \in [0, t_0]} ) -  \mu ( \{ \phi_t \}_{t \in [0, t_1]} ) \, \rvert
\leq \ind \big( \Delta (\phi_{\underline{t}}) \big) \,.
$$
In the second case,
take $\epsilon, \epsilon' > 0$ such that
$$
\{ f_{t_0} \leq 0 \} \subset \{ f_{\underline{t}} \leq - \epsilon \}  \subset \{ f_{\underline{t} - \epsilon'} \leq 0 \} \subset \{ f_{\underline{t}} \leq 0 \} \,.
$$
By monotonicity of the cohomological index
and since $s \mapsto \mu ( \{ \phi_t \}_{t \in [0, s]} )$
is constant on $[t_0, \underline{t})$
we then have
\begin{equation}\label{equation: in stronger relation 1}
\ind \big( \{ f_{\underline{t}} \leq - \epsilon \} \big) = \ind \big( \{ f_{t_0} \leq 0 \} \big) \,.
\end{equation}
As in the proof of \cite[Proposition 4.15]{GKPS} we have
\begin{equation}\label{equation: in stronger relation 2}
\lvert \, \mu ( \{ \phi_t \}_{t \in [0, t_0]} ) -  \mu ( \{ \phi_t \}_{t \in [0, t_1]} ) \, \rvert
\leq \ind \big( \{ f_{\underline{t}} \leq \epsilon \} \big) - \ind \big( \{ f_{\underline{t}} \leq - \epsilon \} \big)
\end{equation}
and
\begin{equation}\label{equation: in stronger relation 3}
\ind \big( \{ f_{\underline{t}} \leq \epsilon \} \big) \leq \ind \big( \{ f_{\underline{t}} \leq - \epsilon \} \cup \mathcal{U} \big) \,,
\end{equation}
where $\mathcal{U}$ is a neighborhood of the set $C$
of critical points of $f_{\underline{t}}$ of critical value zero
that has the same cohomological index as $C$.
Since
$$
\mu ( \{ \phi_t \}_{t \in [0, t_0]} ) = \ind \big( \{ f_0 \leq 0 \}) - \ind \big( \{ f_{t_0} \leq 0 \})
$$
is even (by assumption)
and $\ind \big( \{ f_0 \leq 0 \} \big)$ is even
(by \cite[Remark 3.13]{GKPS}),
using \eqref{equation: in stronger relation 1} we see that 
$\ind \big( \{ f_{\underline{t}} \leq - \epsilon \} \big)$ is even
and so, by subadditivity of the cohomological index,
$$
\ind \big( \{ f_{\underline{t}} \leq - \epsilon \} \cup \mathcal{U} \big)
\leq \ind \big( \{ f_{\underline{t}} \leq - \epsilon \} \big) + \ind (\mathcal{U})
= \ind \big( \{ f_{\underline{t}} \leq - \epsilon \} \big) + \ind (C) \,.
$$
Since $\ind (C) = \ind \big( \Delta (\phi_{\underline{t}}) \big)$
by \cite[Proposition 2.22]{GKPS},
using \eqref{equation: in stronger relation 2} and \eqref{equation: in stronger relation 3}
we thus conclude that
$$
\lvert \, \mu ( \{ \phi_t \}_{t \in [0, t_0]} ) -  \mu ( \{ \phi_t \}_{t \in [0, t_1]} ) \, \rvert
\leq \ind \big( \Delta (\phi_{\underline{t}}) \big) \,.
$$
\end{proof}

The following Poincar\'e duality property
for the cohomological index
is proved in \cite[Proposition 4.1.15]{Car13}
for subsets of complex projective space.
We adapt here the proof to the case of lens spaces.

\begin{lemma}\label{lemma: PD cohomological index}
Assume that zero is a regular value
of a function $f: L_k^{2M-1} \rightarrow \mathbb{R}$.
Then
$$
\ind \big( \{ f \leq 0 \} \big) + \ind \big( \{ f \geq 0 \} \big) = 2M \,.
$$
\end{lemma}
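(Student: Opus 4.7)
The plan is to reduce the identity $\ind(\{f \leq 0\}) + \ind(\{f \geq 0\}) = 2M$ to a Poincaré duality statement on $L := L_k^{2M-1}$. Write $A = \{f \leq 0\}$, $B = \{f \geq 0\}$, and $\Sigma = \{f = 0\}$. Since $0$ is a regular value, $A$ and $B$ are compact codimension-$0$ smooth submanifolds of $L$ with common boundary $\Sigma$ and $A \cup B = L$, so all sets involved are ANRs and Čech cohomology coincides with singular. Letting $r_A^{\ast} : H^{\ast}(L;\mathbb{Z}_k) \to H^{\ast}(A;\mathbb{Z}_k)$ and $r_B^{\ast}$ denote the restriction maps and $K_A^i = \ker(r_A^{\ast}|_{H^i(L)})$, $K_B^i = \ker(r_B^{\ast}|_{H^i(L)})$, the definition of $\ind$ and the fact that $\dim_{\mathbb{Z}_k} H^{\ast}(L;\mathbb{Z}_k) = 2M$ give $\ind(A) = 2M - \sum_i \dim_{\mathbb{Z}_k} K_A^i$ and analogously for $B$. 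It thus suffices to prove
\begin{equation*}
\dim_{\mathbb{Z}_k} K_A^i + \dim_{\mathbb{Z}_k} K_B^{2M-1-i} = \dim_{\mathbb{Z}_k} H^i(L; \mathbb{Z}_k)
\end{equation*}
for each $i$; summing over $i$ yields $\sum_i (\dim K_A^i + \dim K_B^i) = 2M$, whence $\ind(A) + \ind(B) = 2M$.

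I plan to establish this dimension identity by showing that $K_A^i$ and $K_B^{2M-1-i}$ are mutually annihilating under the perfect cup-product pairing
\[
H^i(L;\mathbb{Z}_k) \otimes H^{2M-1-i}(L;\mathbb{Z}_k) \to H^{2M-1}(L;\mathbb{Z}_k) \cong \mathbb{Z}_k ,
\]
which is nondegenerate because $L_k^{2M-1}$ is smoothly orientable (and hence $\mathbb{Z}_k$-orientable). The long exact sequence of the pair identifies $K_A^i$ with the image of $H^i(L, A;\mathbb{Z}_k) \to H^i(L;\mathbb{Z}_k)$ and analogously for $K_B^i$. Orthogonality is the easy direction: if $\alpha \in K_A^i$ and $\beta \in K_B^{2M-1-i}$ lift to $\tilde\alpha \in H^i(L, A)$ and $\tilde\beta \in H^{2M-1-i}(L, B)$ respectively, then the relative cup product $\tilde\alpha \cup \tilde\beta$ lives in $H^{2M-1}(L, A \cup B) = H^{2M-1}(L, L) = 0$, so $\alpha \cup \beta = 0$ in $H^{\ast}(L)$.

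The main obstacle is the non-degeneracy direction: given $\alpha \in H^i(L)$ with $\alpha|_A \neq 0$, I must produce $\beta \in K_B^{2M-1-i}$ with $\alpha \cup \beta \neq 0$. Here Lefschetz duality for the compact oriented manifold-with-boundary $(A, \Sigma)$, which inherits orientability from $L$, supplies a class $\gamma \in H^{2M-1-i}(A, \Sigma;\mathbb{Z}_k)$ with $\langle \alpha|_A \cup \gamma, [A, \Sigma]\rangle \neq 0$. The excision isomorphism $H^{2M-1-i}(L, B) \cong H^{2M-1-i}(A, \Sigma)$, valid because $B$ furnishes a collar of $\Sigma$, lifts $\gamma$ to $\tilde\beta \in H^{2M-1-i}(L, B)$, and I take $\beta$ to be its image in $H^{2M-1-i}(L)$; by construction $\beta \in K_B^{2M-1-i}$. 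The crux is then the naturality identity
\[
\langle \alpha \cup \beta, [L]\rangle_L = \langle \alpha|_A \cup \gamma, [A, \Sigma]\rangle_A ,
\]
which I expect to be the most delicate step. It follows from functoriality of the cup product together with the fact that under the excision isomorphism $H_{2M-1}(L, B;\mathbb{Z}_k) \cong H_{2M-1}(A, \Sigma;\mathbb{Z}_k)$, the image of the fundamental class $[L]$ under $H_{2M-1}(L) \to H_{2M-1}(L,B)$ corresponds to the fundamental class $[A, \Sigma]$. Once this identification is in place, $K_A^i = (K_B^{2M-1-i})^{\perp}$ under a perfect pairing, the displayed dimension equality holds, and the lemma follows.
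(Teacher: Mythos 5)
Your proof is correct, and it takes a genuinely different and more streamlined route than the paper's. The paper proceeds by contradiction with a parity case analysis: it first uses subadditivity of the cohomological index to get $\ind(A)+\ind(B)\geq 2M$, then assumes $\ind(A)+\ind(B)\geq 2M+1$ and, after distinguishing whether $\ind(A)$ and $\ind(B)$ are even or odd, produces relative classes $u\in H^{\ast}(L,B)$, $v\in H^{\ast}(L,A)$ mapping to monomial generators $\beta^{M-a}$ and $\alpha\beta^{a-1}$ of $H^{\ast}(L;\mathbb{Z}_k)$, and derives a contradiction from $v\cup u\in H^{2M-1}(L,A\cup B)=H^{2M-1}(L,L)=0$ together with naturality of the cup product. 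Your argument isolates the same underlying mechanism—the vanishing of the relative cup product in $H^{\ast}(L, A\cup B)$—but packages it as a clean orthogonality statement: you show directly that $\ker(r_A^{\ast})$ and $\ker(r_B^{\ast})$ are exact annihilators of each other under the Poincar\'e duality pairing on $L$, using Lefschetz duality on the manifold-with-boundary $(A,\Sigma)$ for the non-degeneracy direction. This gives the equality in one stroke, with no case analysis on parity, no appeal to subadditivity, and no dependence on the truncated-polynomial ring structure of $H^{\ast}(L_k^{2M-1};\mathbb{Z}_k)$. One small technical point worth spelling out: the relative cup product landing in $H^{2M-1}(L, A\cup B)$ in the orthogonality step requires $\{A,B\}$ to be an excisive triad, which fails on the nose since $\mathrm{int}(A)\cup\mathrm{int}(B)$ misses $\Sigma$; this is repaired by replacing $A,B$ with the homotopy-equivalent collared thickenings $\{f\leq\epsilon\}$, $\{f\geq-\epsilon\}$, exactly as one does to justify the excision isomorphism $H^{\ast}(L,B)\cong H^{\ast}(A,\Sigma)$ that you already invoke.
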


\begin{proof}
Let $A = \{ f \leq 0 \}$ and $B = \{ f \geq 0 \}$.
Since zero is a regular value of $f$,
$A$ and $B$ are smooth submanifolds with boundary
and thus deformation retract to sets $A'$ and $B'$
that are strictly included in $\{ f < 0 \}$ and $\{ f > 0 \}$ respectively.
By continuity of the cohomological index,
there are thus open subsets
$\mathcal{U}_A$ and $\mathcal{U}_B$ of $L_k^{2M-1}$
strictly included in $\{ f < 0 \}$ and $\{ f > 0 \}$ respectively
with $\ind (\mathcal{U}_A) = \ind (A)$
and $\ind (\mathcal{U}_B) = \ind (B)$.
Assume first that $\ind(A)$ is even.
Since $A \cup B = L_k^{2M-1}$,
by subadditivity of the cohomological index
we have
$$
\ind (A) + \ind (B) \geq 2M \,.
$$
Assume by contradiction that
\begin{equation}\label{equation: in proof PD}
\ind (\mathcal{U}_A) + \ind (\mathcal{U}_B)
= \ind (A) + \ind (B) \geq  2M + 1 \,.
\end{equation}
Let $\ind (A) = \ind (\mathcal{U}_A) = 2a$.
By definition of the cohomological index
(cf.\ \cite[Lemma 3.3]{GKPS})
and since \v{C}ech cohomology
agrees with singular cohomology on open sets,
the homomorphism
$H^{2a-1} (L_k^{2M-1}; \mathbb{Z}_k)
\rightarrow H^{2a-1} (\mathcal{U}_A; \mathbb{Z}_k)$
induced by the inclusion
$\mathcal{U}_A \hookrightarrow L_k^{2M-1}$
is injective.
Since $k$ is prime,
the coefficient ring $\mathbb{Z}_k$ is a field
and so cohomology is the dual of homology,
thus the homomorphism
$H_{2a - 1} (\mathcal{U}_A; \mathbb{Z}_k)
\rightarrow H_{2a-1} (L_k^{2M-1}; \mathbb{Z}_k)$
induced by the inclusion
$\mathcal{U}_A \hookrightarrow L_k^{2M-1}$
is surjective.
Consider the commutative square
\begin{equation}\label{dia}
\begin{gathered}
\xymatrix{
H_{2a - 1} \big(L^{2M-1}_k \smallsetminus \mathrm{int}(B); \mathbb{Z}_k \big) \ar[r]
& H_{2a - 1} (L^{2M-1}_k; \mathbb{Z}_k) \\
H^{2(M - a)} (L^{2M-1}_k, B; \mathbb{Z}_k) \ar[u]^-{\cong} \ar[r]^-{j_B}
& H^{2(M - a)}(L^{2M-1}_k; \mathbb{Z}_k) \ar[u]^-{\cong}}
\end{gathered}
\end{equation}
where the horizontal arrows are induced by the inclusions
and the vertical ones are Poincaré duality isomorphisms
(composed with excision
$H^{2(M - a)} (L^{2M-1}_k, B; \mathbb{Z}_k) \rightarrow
H^{2(M - a)} (L^{2M-1}_k \smallsetminus \mathrm{int}(B), \partial B; \mathbb{Z}_k)$
for the vertical arrow on the left hand side).
Since $\mathcal{U}_A \subset L^{2M-1}_k \smallsetminus \mathrm{int}(B)$,
surjectivity of the inclusion homomorphism
$H_{2a - 1} (\mathcal{U}_A; \mathbb{Z}_k)
\rightarrow H_{2a-1} (L_k^{2M-1}; \mathbb{Z}_k)$
implies that the homomorphism
on the top horizontal line of the diagram
is also surjective.
Thus $j_B$ is surjective,
and so there exists a class
$$
u \in H^{2(M - a)} (L^{2M-1}_k, B; \mathbb{Z}_k)
$$
such that $j_B (u) = \beta^{M - a}$,
where $\beta$ denotes a generator
of $H^2 (L^{2M-1}_k; \mathbb{Z}_k)$.
By \eqref{equation: in proof PD}
we have $\ind (\mathcal{U}_B) \geq 2M + 1 - 2a$,
and so by a similar argument
there exists a class
$$
v \in H^{2a - 1}(L^{2M-1}_k, A; \mathbb{Z}_k)
$$
such that $j_A (v) = \alpha \beta^{a - 1}$,
where
$$
j_A: H_{2a - 1} (L_k^{2M-1}, A; \mathbb{Z}_k)
\rightarrow H_{2a-1} (L_k^{2M-1}; \mathbb{Z}_k)
$$
is the homomorphism induced by the inclusion
and where $\alpha$ denotes a generator
of $H^1 (L^{2M-1}_k; \mathbb{Z}_k)$.
We then obtain a contradiction:
on the one hand 
$v \cup u \in H^{2M-1}(L^{2M-1}_k, A \cup B; \mathbb{Z}_k)$ is zero
since $A \cup B = L^{2M - 1}_k$,
on the other hand by naturality of the cup product
we have
$$
j_{A \cup B} (v \cup u) 
= j_A (v) \cup  j_B (u)
= \alpha \beta^{M - 1}
\neq 0 \,. 
$$
This finishes the proof in the case when $\ind(A)$ is even.
If $\ind(B)$ is even a similar argument also gives the result.
Suppose thus that $\ind(A)$ and $\ind(B)$ are odd.
Then $\ind (A) + \ind (B)$ is even,
and so it is enough to prove
$$
2M - 1 \leq \ind (A) + \ind (B) \leq 2M \,.
$$
The first inequality follows from subadditivity of the cohomological index.
For the second one,
suppose by contradiction that
$$
\ind (\mathcal{U}_A) + \ind (\mathcal{U}_B) = \ind (A) + \ind (B) \geq 2M + 1 \,,
$$
and let $\ind(A) = 2a - 1$.
By a similar argument as above,
there exist cohomology classes
$u$ in $H^{2M - 2a + 1} (L^{2M-1}_k, B; \mathbb{Z}_k)$
and $v$ in $H^{2a - 2}(L^{2M-1}_k, A; \mathbb{Z}_k)$
such that
$j_B (u) = \alpha \beta^{M - a}$
and $j_A (v) = \beta^{a - 1}$.
Since
$j_B (u) \cup  j_A (v) = \alpha \beta^{M - 1} \neq 0$,
this leads to the same contradiction as above.
\end{proof}

Applying \autoref{lemma: PD cohomological index}
we obtain the following result.

\begin{prop}\label{proposition: PD Maslov}
For every $\widetilde{\phi} \in \widetilde{\Cont_0} (L_k^{2n-1}, \xi_0)$
such that $\Pi \big(\mathcal{L}(\widetilde{\phi})\big)$
does not have discriminant points
we have
$$
\mu (\widetilde{\phi}) + \mu (\widetilde{\phi}^{-1}) = 2n \,.
$$
\end{prop}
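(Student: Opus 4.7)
The plan is to represent $\widetilde{\phi}^{-1}$ by a based family of conical generating functions obtained from one for $\widetilde{\phi}$, and then to evaluate $\mu(\widetilde{\phi}) + \mu(\widetilde{\phi}^{-1})$ using Poincar\'e duality for the cohomological index (\autoref{lemma: PD cohomological index}). Let $\{\phi_t\}_{t \in [0, 1]}$ represent $\widetilde{\phi}$ and let $F_t : \mathbb{R}^{2n} \times \mathbb{R}^{2nN} \to \mathbb{R}$ be a based family of conical generating functions for it; after stabilization and fibre-preserving conical homeomorphism of the whole family, I may further assume that $F_0 = 0 \oplus Q$ is the extension by zero of some non-degenerate $\mathbb{Z}_k$-invariant quadratic form $Q$ on $\mathbb{R}^{2nN}$. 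A direct check on the defining map $\tau$ shows that if $F$ generates a conical symplectomorphism $\Phi$, then $-F$ generates $\Phi^{-1}$; consequently, $-F_t$ is a family of conical generating functions for the pointwise inverse isotopy $\{\phi_t^{-1}\}_{t \in [0, 1]}$, which represents $\widetilde{\phi}^{-1}$. This family is itself based because $-F_0 = 0 \oplus (-Q)$ is again a stabilization of zero.

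Writing out both Maslov indices from the respective based families gives
\[
\mu(\widetilde{\phi}) + \mu(\widetilde{\phi}^{-1})
= \big(\ind(F_0) + \ind(-F_0)\big) - \big(\ind(F_1) + \ind(-F_1)\big),
\]
and I now evaluate each parenthesized sum. For the second: the hypothesis that $\Pi(\mathcal{L}(\widetilde{\phi})) = \bar{\phi}_1$ has no discriminant points translates, via the correspondence between fibre critical points of critical value zero and discriminant points of $\bar{\phi}_1$, into $0$ being a regular value of $f_1$ on $L_k^{2n(N+1)-1}$, whence \autoref{lemma: PD cohomological index} gives $\ind(F_1) + \ind(-F_1) = 2n(N+1)$. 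For the first: \autoref{lemma: direct sum cohomological index} applied with the even value $\ind(0) = 2n$ gives $\ind(F_0) = 2n + \ind(Q)$ and $\ind(-F_0) = 2n + \ind(-Q)$, while non-degeneracy of $Q$ means that the induced function on $L_k^{2nN-1}$ has no critical points at all, so every value (in particular $0$) is regular; \autoref{lemma: PD cohomological index} then yields $\ind(Q) + \ind(-Q) = 2nN$, hence $\ind(F_0) + \ind(-F_0) = 4n + 2nN$. Substituting,
\[
\mu(\widetilde{\phi}) + \mu(\widetilde{\phi}^{-1}) = (4n + 2nN) - 2n(N+1) = 2n.
\]

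The main delicacy I anticipate is the bookkeeping behind the reduction $F_0 = 0 \oplus Q$: the equivalence relation defining based families (stabilization plus fibre-preserving conical homeomorphisms) must be compatible with the computation of the four cohomological indices entering the sum. This is essentially guaranteed by the fact that the non-linear Maslov index is already known to be well-defined, so the alternating combination $\ind(F_0) - \ind(F_1) + \ind(-F_0) - \ind(-F_1)$ is invariant under these moves, and it suffices to evaluate it on a single convenient representative, which is what the argument above does.
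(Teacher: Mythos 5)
Your proposal is correct and follows essentially the same strategy as the paper: representing $\widetilde{\phi}^{-1}$ by $-F_t$, splitting $\mu(\widetilde{\phi}) + \mu(\widetilde{\phi}^{-1})$ into the $t=0$ and $t=1$ contributions, and applying \autoref{lemma: PD cohomological index} at $t=1$ where the no-discriminant-point hypothesis makes zero a regular value. The only minor divergence is at $t=0$: the paper keeps $F_0$ as a general $\mathbb{Z}_k$-invariant quadratic form $Q_0$ on $\mathbb{R}^{2n+2nN}$, cites a result from GKPS to pin its nullity at $2n$, and then reads off $\ind(Q_0) + \ind(-Q_0) = 4n + 2nN$ directly from the Morse-theoretic formula for the index of a quadratic form, whereas you normalize $F_0 = 0 \oplus Q$ with $Q$ non-degenerate and combine \autoref{lemma: direct sum cohomological index} with a second application of \autoref{lemma: PD cohomological index}; both evaluations give $4n + 2nN$, and your hedging about the normalization being justified by the well-definedness of $\mu$ is sound, though the paper's version avoids having to claim the extra splitting $F_0 = 0 \oplus Q$ at all.
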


\begin{proof}
Let $F_t: \mathbb{R}^{2n} \times \mathbb{R}^{2nN} \rightarrow \mathbb{R}$
be a based family of conical generating functions
for a contact isotopy $\{ \phi_t \}_{t \in [0, 1]}$ representing $\widetilde{\phi}$.
Then $-F_t$ is a based family of conical generating functions
for $\{ \phi_t^{-1} \}_{t \in [0, 1]}$,
which represent $\widetilde{\phi}^{-1}$.
Thus
$$
\mu (\widetilde{\phi}) + \mu (\widetilde{\phi}^{-1})
= \ind (F_0) -  \ind (F_1) + \ind(- F_0) - \ind(- F_1) \,.
$$
Since $F_0$ is equivalent to the zero function,
up to a fibre preserving conical homeomorphism
it is equal to a $\mathbb{Z}_k$-invariant quadratic form $Q_0$.
Since $F_0$ generates the identity,
by \cite[Proposition 2.22]{GKPS} the nullity of $Q_0$ is equal to $2n$.
Let $\iota$ be the dimension of the maximal subspace
on which $Q_0$ is negative definite.
Then
$$
\ind (F_0) + \ind (- F_0) = \ind (Q_0) + \ind (- Q_0)
= (2n + \iota) + \big( 2n + (2nN - \iota) \big)
= 4n + 2nN \,.
$$
On the other hand,
since $\Pi \big(\mathcal{L}(\widetilde{\phi})\big)$ has no discriminant points
we have that zero is a regular value of $F_1$
and so we can apply \autoref{lemma: PD cohomological index}
to obtain
$$
\ind (F_1) + \ind (-F_1) = 2n + 2nN \,.
$$
We conclude that
$\mu (\widetilde{\phi}) + \mu (\widetilde{\phi}^{-1}) = 2n$.
\end{proof}

So far we have assumed that $k$ is prime.
Suppose now that $k$ is not prime,
and let $k'$ be the smallest prime that divides $k$.
As in  \cite[Remark 1.4]{GKPS},
we define the non-linear Maslov index
$$
\mu: \widetilde{\Cont_0} (L_k^{2n-1}, \xi_0) \rightarrow \mathbb{Z}
$$
by pulling back
$\mu: \widetilde{\Cont_0} (L_{k'}^{2n-1}, \xi_0) \rightarrow \mathbb{Z}$
by the natural map
$\widetilde{\Cont_0} (L_k^{2n-1}, \xi_0) \rightarrow \widetilde{\Cont_0} (L_{k'}^{2n-1}, \xi_0)$.
This general non-linear Maslov index
then satisfies the following properties.

\begin{prop}[Non-linear Maslov index for general $k$]
\label{proposition: nonlinear Maslov index general k}
For any integer $k \geq 2$
and $n$-tuple $\underline{w} = (w_1, \cdots, w_n)$
of positive integers relatively prime to $k$,
the non-linear Maslov index
$$
\mu: \widetilde{\Cont_0} \big( L_k^{2n-1} (\underline{w}), \xi_0 \big) \rightarrow \mathbb{Z}
$$
satisfies the following properties:
\renewcommand{\theenumi}{\roman{enumi}}
\begin{enumerate}

\item \label{examples general Maslov} \emph{Identity and small elements}:
if $\widetilde{\phi}$ is small enough in the $\mathcal{C}^1$-topology
then $0 \leq \mu (\widetilde{\phi}) \leq 2n$,
and if moreover $\widetilde{\phi}$ is positive
then $\mu (\widetilde{\phi}) = 2n$.
Moreover,
$\mu (\widetilde{\id}) = 0$.

\item \label{quasimorphism general Maslov} \emph{Quasimorphism property}:
$$
\lvert\; \mu ( \widetilde{\phi} \cdot \widetilde{\psi} ) - \mu (\widetilde{\phi}) - \mu (\widetilde{\psi}) \;\rvert \leq 2n + 1 \,,
$$
and
$$
\lvert\; \mu ( \widetilde{\phi} \cdot \widetilde{\psi} ) - \mu (\widetilde{\phi}) - \mu (\widetilde{\psi}) \;\rvert \leq 2n
$$
if $k$ is even;
in particular,
$\mu$ is a quasimorphism.

\item \label{triangle inequality general Maslov} \emph{Triangle inequality}:
$$
\mu ( \widetilde{\phi} \cdot \widetilde{\psi} ) \leq \mu ( \widetilde{\phi} ) + \mu ( \widetilde{\psi} ) +1 \,,
$$
and
$$
\mu ( \widetilde{\phi} \cdot \widetilde{\psi} ) \leq \mu ( \widetilde{\phi} ) + \mu ( \widetilde{\psi} )
$$
if either $k$ is even
or $\mu ( \widetilde{\phi} )$ is even
or  $\mu ( \widetilde{\psi} )$ is even.

\item \label{monotonicity general Maslov} \emph{Monotonicity}:
if $\widetilde{\phi} \leq \widetilde{\psi}$
then $\mu (\widetilde{\phi}) \leq \mu (\widetilde{\psi})$.

\item \label{relation with translated points general Maslov} \emph{Relation with discriminant points}:
for any contact isotopy $\{\phi_t\}_{t \in [0, 1]}$ of $\big( L_k^{2n-1} (\underline{w}), \xi_0 \big)$
starting at the identity
and any subinterval $[t_0, t_1]$ of $[0, 1]$,
if $\mu ( \{ \phi_t \}_{t \in [0, t_0]} ) \neq \mu ( \{ \phi_t \}_{t \in [0, t_1]} )$
then there is $\underline{t} \in [t_0, t_1]$
such that $\bar{\phi}_{\underline{t}}$ has discriminant points.
Assume that the map
$s \mapsto \mu ( \{ \phi_t \}_{t \in [0, s]} )$
is constant on $[t_0, \underline{t})$ and on $(\underline{t}, t_1]$,
and denote by $\Delta_{k'} (\phi_{\underline{t}}) \subset L_{k'}^{2n-1} (\underline{w})$
the set of discriminant points at time $\underline{t}$
of the lift of $\{\phi_t\}$ to $L_{k'}^{2n-1} (\underline{w})$,
where $k'$ is the smallest prime dividing $k$.
Then
$$
\lvert \, \mu ( \{ \phi_t \}_{t \in [0, t_0]} ) -  \mu ( \{ \phi_t \}_{t \in [0, t_1]} ) \, \rvert
\leq \ind \big( \Delta_{k'} (\phi_{\underline{t}}) \big) + 1 \,,
$$
and
$$
\lvert \, \mu ( \{ \phi_t \}_{t \in [0, t_0]} ) -  \mu ( \{ \phi_t \}_{t \in [0, t_1]} ) \, \rvert
\leq \ind \big( \Delta_{k'} (\phi_{\underline{t}}) \big)
$$
if either $k$ is even
or if $\{\phi_t\}_{t \in [0, 1]}$ is negative
and $\mu ( \{ \phi_t \}_{t \in [0, t_0]} )$ is even.
If moreover all the discriminant points of $\bar{\phi}_{\underline{t}}$
are non-degenerate then
$$
\lvert \, \mu ( \{ \phi_t \}_{t \in [0, t_0]} ) -  \mu ( \{ \phi_t \}_{t \in [0, t_1]} ) \, \rvert \leq 1 \,.
$$

\item \label{Reeb flow general Maslov} \emph{Reeb flow}:
for every real number $T$ we have
$$
\mu ( \widetilde{r_{T}} ) = 2n \left\lceil \frac{T}{2 \pi} \right\rceil \,.
$$

\item \label{concatenation with the Reeb flow general Maslov}
\emph{Composition with the Reeb flow}:
for every integer $m$ we have
$$
\mu ( \widetilde{\phi} \,\cdot\, \widetilde{r_{2 \pi m}} )
= \mu (\widetilde{r_{2 \pi m}} \,\cdot\, \widetilde{\phi}) = 2n m + \mu (\widetilde{\phi}) \,.
$$

\item \label{semicontinuity} \emph{Lower semi-continuity}:
the map $T \mapsto \mu ( \widetilde{r_{-T}} \cdot \widetilde{\phi} )$
is lower semi-continuous.

\item \label{PD general Maslov} \emph{Poincar\'e duality}:
for every $\widetilde{\phi}$ such that
$\Pi \big(\mathcal{L}(\widetilde{\phi})\big)$ has no discriminant points
we have
$$
\mu (\widetilde{\phi}) + \mu (\widetilde{\phi}^{-1}) = 2n \,.
$$

\end{enumerate}
\end{prop}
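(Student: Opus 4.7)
The plan is to deduce each property from the corresponding statement already established in the prime case, by pulling it back along the natural group homomorphism
$$
p : \widetilde{\Cont_0}\bigl(L_k^{2n-1}(\underline{w}), \xi_0\bigr) \longrightarrow \widetilde{\Cont_0}\bigl(L_{k'}^{2n-1}(\underline{w}), \xi_0\bigr)
$$
used to define $\mu$. Since $k'$ divides $k$, the unique subgroup of $\mathbb{Z}_k$ of order $k'$ acts on $\mathbb{S}^{2n-1}$ with weights $\underline{w}$, yielding a covering $L_{k'}^{2n-1}(\underline{w}) \to L_k^{2n-1}(\underline{w})$; any contact isotopy of $L_k^{2n-1}(\underline{w})$ based at the identity lifts uniquely to a contact isotopy of $L_{k'}^{2n-1}(\underline{w})$ based at the identity, and this lifting respects homotopies with fixed endpoints, giving $p$.

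My first step would be to record that $p$ is compatible with every piece of structure appearing in the statement: it is a group homomorphism, sends $\widetilde{\id}$ to $\widetilde{\id}$ and $\widetilde{r_T}$ to $\widetilde{r_T}$ (with the same parameter $T$, since the Reeb vector field descends under the covering), is continuous for the $\mathcal{C}^1$-topology, carries positive (resp.\ non-negative, negative, non-positive) isotopies to isotopies of the same type — so in particular respects the partial order $\leq$ — and satisfies $\mathcal{L} \circ p = \mathcal{L}$ on the nose, so that the notion of discriminant points on the sphere is the same whether taken via $\widetilde{\phi}$ or via $p(\widetilde{\phi})$. Because $\mu = \mu_{k'} \circ p$ by definition, properties (\ref{examples general Maslov})--(\ref{monotonicity general Maslov}) and (\ref{Reeb flow general Maslov})--(\ref{PD general Maslov}) will then follow at once from, respectively, \autoref{example: identity etc}, the inequalities \eqref{equation: quasimorphism}--\eqref{equation: quasimorphism projective space}, \autoref{proposition: triangle inequality}, \autoref{proposition: monotonicity Maslov}, equation \eqref{equation: Reeb flow}, \autoref{proposition: concatenation}, \autoref{proposition: upper semi-continuous} together with \autoref{remark: semi-continuous}, and \autoref{proposition: PD Maslov}. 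The sharper alternatives in the branches ``$k$ even'' match the prime-case branches ``$k' = 2$'' exactly, because $k'$ is the smallest prime dividing $k$ and thus equals $2$ iff $k$ is even, and similarly the parity conditions on $\mu(\widetilde{\phi})$ transfer unchanged since $\mu = \mu_{k'} \circ p$.

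Property (\ref{relation with translated points general Maslov}) needs one extra observation. Given a contact isotopy $\{\phi_t\}$ of $L_k^{2n-1}(\underline{w})$, let $\{\phi'_t\}$ denote its lift to $L_{k'}^{2n-1}(\underline{w})$: both isotopies share the same $\mathbb{S}^{2n-1}$-lift $\{\bar{\phi}_t\}$, and $\mu(\{\phi_t\}_{t \in [0,s]}) = \mu_{k'}(\{\phi'_t\}_{t \in [0,s]})$ for every $s$. Applying \autoref{proposition: relation with discriminant points} and \autoref{proposition: relation with discriminant points stronger} to $\{\phi'_t\}$ then produces exactly the stated inequalities, with the relevant index being that of the set of discriminant points of $\phi'_{\underline{t}}$, which is by definition $\Delta_{k'}(\phi_{\underline{t}})$. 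The non-degeneracy refinement at the end of (\ref{relation with translated points general Maslov}) is inherited from the corresponding clause of \autoref{proposition: relation with discriminant points}, since non-degeneracy is a condition on $\bar{\phi}_{\underline{t}}$ and so is unaffected by whether we view it as coming from $L_k$ or $L_{k'}$.

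I do not anticipate a genuine obstacle: the argument is entirely functorial, and the only point requiring attention is the bookkeeping that (a) the ``$k$ even'' sharper bounds in (\ref{quasimorphism general Maslov}), (\ref{triangle inequality general Maslov}) and (\ref{relation with translated points general Maslov}) correspond precisely to the ``$k' = 2$'' sharper bounds in the prime case, and (b) the set $\Delta_{k'}(\phi_{\underline{t}})$ is the natural set on which the prime-case bounds depend once one applies them to the $L_{k'}$-lift rather than to the $L_k$-isotopy itself.
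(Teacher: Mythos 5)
Your proposal is correct and takes essentially the same approach as the paper, which simply observes that all properties hold in the prime case and transfer to general $k$ by pulling back along the natural map $p : \widetilde{\Cont_0}\bigl(L_k^{2n-1}(\underline{w}), \xi_0\bigr) \to \widetilde{\Cont_0}\bigl(L_{k'}^{2n-1}(\underline{w}), \xi_0\bigr)$ defining $\mu$. The paper's proof is a single sentence; your version spells out the functoriality of $p$ (homomorphism, preserves identity and Reeb flow, preserves order and positivity, commutes with $\mathcal{L}$) that the paper treats as implicit, and correctly identifies $\Delta_{k'}(\phi_{\underline{t}})$ as the discriminant set of the lift to $L_{k'}$ so that \autoref{proposition: relation with discriminant points stronger} applies verbatim.
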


\begin{proof}
In the case when $k$ is prime
all the properties have been discussed above
in \autoref{example: identity etc},
\eqref{equation: quasimorphism},
\eqref{equation: quasimorphism projective space},
\autoref{proposition: triangle inequality},
\autoref{proposition: monotonicity Maslov},
\autoref{proposition: relation with discriminant points},
\autoref{proposition: relation with discriminant points stronger},
\eqref{equation: Reeb flow},
\autoref{proposition: concatenation},
\autoref{remark: semi-continuous}
and \autoref{proposition: PD Maslov}.
If $k$ is not prime
and $k'$ is the smallest prime that divides $k$
then the properties of 
$\mu: \widetilde{\Cont_0} \big( L_{k'}^{2n-1} (\underline{w}), \xi_0 \big) \rightarrow \mathbb{Z}$
imply the corresponding properties for the pullback
$\mu: \widetilde{\Cont_0} \big( L_k^{2n-1} (\underline{w}), \xi_0 \big) \rightarrow \mathbb{Z}$
by the natural map
$\widetilde{\Cont_0} \big( L_k^{2n-1} (\underline{w}), \xi_0 \big)
\rightarrow \widetilde{\Cont_0} \big( L_{k'}^{2n-1} (\underline{w}), \xi_0 \big)$.
\end{proof}

%%%%%%%%%%%%%%%%%%%%%%%%%%%%%%%%%%%%%%%%%%%%%%%%%

\section{Spectral selectors}\label{section: action selectors}

For any $j \in \Z$ we define the $j$-th spectral selector
on $\widetilde{\Cont_0} \big( L_k^{2n-1} (\underline{w}), \xi_0 \big)$ by
\begin{equation*}
c_j (\widetilde{\phi})
= \inf \big\{\, T \in \R  \;\big\lvert\; \mu (\widetilde{r_{-T}} \cdot \widetilde{\phi}) \leq - j  \,\big\} \,.
\end{equation*}
By \autoref{proposition: nonlinear Maslov index general k} (\ref{monotonicity general Maslov}),
(\ref{quasimorphism general Maslov})
and (\ref{Reeb flow general Maslov}),
the function $T \mapsto \mu (\widetilde{r_{-T}} \cdot \widetilde{\phi})$
is non-increasing
and  tends to $\mp \infty$
as $T \to \pm \infty$,
thus $c_j (\widetilde{\phi})$ is a well-defined real number.
By \autoref{proposition: nonlinear Maslov index general k} (\ref{semicontinuity})
the infimum is in fact a minimum,
in particular
\begin{equation}\label{equation: min}
\left\lceil c_j \big(\widetilde{\phi}\big) \right\rceil_{T_{\underline{w}}}
= \min \left\{\, N \in T_{\underline{w}} \cdot \mathbb{Z}  \;\ \Big\lvert\; \mu (\widetilde{r_{-N}} \cdot \widetilde{\phi}) \leq - j  \,\right\} \,.
\end{equation}
It follows from the definition that the sequence $\{c_j\}$
is non-decreasing.
In the rest of this section we prove
the other properties listed in \autoref{theorem: main}.

We say that a contactomorphism of $\big( L_k^{2n-1} (\underline{w}), \xi_0 \big)$
is non-degenerate with respect to $\alpha_0$
if all its translated points with respect to $\alpha_0$
are non-degenerate for all their translations.
We then have the following lemma.

\begin{lemma}\label{lemma: non-degenerate}
The set of contactomorphisms of $\big( L_k^{2n-1} (\underline{w}), \xi_0 \big)$
contact isotopic to the identity
that are non-degenerate with respect to $\alpha_0$
is dense in $\Cont_0 \big( L_k^{2n-1} (\underline{w}), \xi_0 \big)$
for the $\mathcal{C}^1$-topology.
\end{lemma}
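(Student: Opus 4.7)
The plan is to establish this density by a parametric transversality argument of Sard--Smale type. Fix $\phi \in \Cont_0 \big( L_k^{2n-1} (\underline{w}), \xi_0 \big)$ and an arbitrary $\mathcal{C}^1$-neighborhood $\mathcal{U}$ of $\phi$; I aim to exhibit some $\phi' \in \mathcal{U}$ that is non-degenerate with respect to $\alpha_0$.

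First I would reformulate non-degeneracy as a transversality condition. Since $r_{T_{\underline{w}}} = \id$, translated points of $\phi$ of any translation correspond modulo $T_{\underline{w}}$ to pairs $(p, T) \in L_k^{2n-1}(\underline{w}) \times [0, T_{\underline{w}}]$ sent by the evaluation map
\[
\Psi_\phi \colon (p, T) \longmapsto \big( p, \, r_{-T}(\phi(p)), \, g(p) \big) \in L_k^{2n-1}(\underline{w}) \times L_k^{2n-1}(\underline{w}) \times \mathbb{R}
\]
into the submanifold $\Delta \times \{0\}$, where $\Delta$ denotes the diagonal and $g$ the conformal factor of $\phi$ (which agrees with that of $r_{-T} \circ \phi$ since $r_{-T}$ preserves $\alpha_0$). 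A direct tangent-space calculation, using that the contactomorphism $r_{-T} \circ \phi$ preserves $\xi_0$ at a discriminant point and sends the Reeb vector to a vector with $\alpha_0$-component equal to $1$, shows that non-degeneracy of such a translated point is equivalent to transversality of $\Psi_\phi$ to $\Delta \times \{0\}$ at $(p, T)$; source and target here have dimensions $2n$ and $4n-1$ respectively while $\Delta \times \{0\}$ has dimension $2n-1$, so transversality forces the preimage to be $0$-dimensional.

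Next I would introduce a separable Banach space $\mathcal{V}$ of (time-dependent) contact Hamiltonians on $L_k^{2n-1}(\underline{w})$ of sufficient regularity, and define $\phi_H = \phi \circ \psi_1^H$ where $\psi_1^H$ is the time-$1$ map of the contact flow of $H$ with respect to $\alpha_0$. Assembling the maps $\Psi_{\phi_H}$ gives
\[
\Psi \colon L_k^{2n-1}(\underline{w}) \times [0, T_{\underline{w}}] \times \mathcal{V} \longrightarrow L_k^{2n-1}(\underline{w}) \times L_k^{2n-1}(\underline{w}) \times \mathbb{R},
\]
\[
\Psi(p, T, H) = \big( p, \, r_{-T}(\phi_H(p)), \, g_H(p) \big).
\]
Granting that $\Psi$ is transverse to $\Delta \times \{0\}$, the Sard--Smale theorem then yields a residual, hence dense, set of $H \in \mathcal{V}$ for which $\Psi(\cdot, \cdot, H)$ is transverse to $\Delta \times \{0\}$, i.e.\ $\phi_H$ is non-degenerate with respect to $\alpha_0$. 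Taking such $H$ small enough in $\mathcal{V}$ places $\phi_H$ inside $\mathcal{U}$, establishing the density.

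The main obstacle is verifying the transversality of $\Psi$ itself, which amounts to showing that the $H$-derivatives $\partial_H \Psi$ at any preimage point $(p, T, H)$ together with $T(\Delta \times \{0\})$ cover the target tangent space, equivalently that $\partial_H \Psi$ surjects onto the $2n$-dimensional quotient. Using the explicit formulas for contact Hamiltonian flows and their conformal factors, one shows that a compactly supported contact Hamiltonian localized near $\phi_H(p)$ with prescribed value and $1$-jet can translate $\phi_H(p)$ in any direction of $T_{\phi_H(p)} L_k^{2n-1}(\underline{w})$ and simultaneously shift $g_H(p)$ by any prescribed real number; these two independent variations span the required $2n$-dimensional complement, completing the transversality check.
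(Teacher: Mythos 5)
Your proof is correct and reaches the same conclusion, but it takes a genuinely different route from the paper's.

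\textbf{Comparison.} Both approaches are transversality arguments, but they differ in how the problem is set up and which machinery does the work. The paper recasts non-degeneracy of $\phi$ as transversality of the Legendrian graph $\gr_{\alpha_0}(\phi) \subset L_k^{2n-1}(\underline{w}) \times L_k^{2n-1}(\underline{w}) \times \mathbb{R}$ to finitely many submanifolds $N_l = \bigcup_{t \in (t_l - \epsilon, t_{l+1} + \epsilon)} R_t(\gr_{\alpha_0}(\id))$ (thickened Reeb-translates of the diagonal), then reduces to 1-jets of functions via the Weinstein neighborhood theorem and applies the finite-dimensional Thom transversality theorem together with the fact that $\gr_{\alpha_0}$ is a $\mathcal{C}^1$-local homeomorphism. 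You instead recast non-degeneracy as transversality of an evaluation map $\Psi_\phi(p,T) = (p, r_{-T}(\phi(p)), g(p))$ to $\Delta \times \{0\}$ and perturb $\phi$ by contact Hamiltonian flows using the Sard--Smale/universal-moduli-space machinery. The paper's route avoids Banach-manifold transversality and is arguably more elementary since the nontrivial step is Thom's theorem for functions on $\Lambda$; your route avoids the Legendrian graph formalism and the explicit choice of the $N_l$'s (and of a fundamental domain for the Reeb period), at the cost of verifying a Fredholm setup. Both correctly reformulate non-degeneracy: your index count (source dimension $2n$, codimension of $\Delta \times \{0\}$ equal to $2n$) is right, and your identification of the paper's degeneracy condition with the kernel of the linearized $\Psi_\phi$ holds; the apparent extra direction $\tau R_\alpha$ from varying $T$ is killed by pairing against $\alpha_0$ at a discriminant point.

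\textbf{A small imprecision.} With your convention $\phi_H = \phi \circ \psi_1^H$ the perturbing flow $\psi_t^H$ acts near $p$, not near $\phi_H(p)$, so a Hamiltonian supported near $\phi_H(p)$ would not affect $\psi_1^H(p)$ unless $\phi$ is close to the identity. You should either localize the perturbation near $\psi_1^H(p)$ (equivalently near $p$ for small $H$) or switch to the convention $\phi_H = \psi_1^H \circ \phi$, under which the bump function supported near $\phi_H(p)$ genuinely intersects the trajectory $\{\psi_t^H(\phi(p))\}_{t\in[0,1]}$. Either fix is immediate and does not alter the argument: the surjectivity of $\delta H \mapsto (\partial_H \phi_H(p)\,\delta H,\; \partial_H g_H(p)\,\delta H)$ onto $T_{\phi_H(p)}L \times \mathbb{R}$ follows from the ability to prescribe $K'(q)$ (producing the Reeb component of $X_{K'}(q)$), $dK'(q)|_{\xi}$ (producing the $\xi$-component), and $dK'(q)(R_\alpha)$ (producing the shift in the conformal factor) independently at the point $q$ on the trajectory. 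You should also take $T \in \mathbb{R}/T_{\underline{w}}\mathbb{Z}$ rather than $[0, T_{\underline{w}}]$ to avoid boundary issues in the Sard--Smale setup.
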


\begin{proof}
We first prove the following result.
Let $\Lambda_0$ be a closed Legendrian submanifold
of a contact manifold $\big(M, \xi = \ker (\alpha) \big)$,
and denote by $\mathcal{L}eg (\Lambda_0)$
its Legendrian isotopy class.
Then for any collection $N = \{N_l \,,\, l \in \mathbb{Z} \}$
of submanifolds of $M$
the set $\mathcal{L}eg_N (\Lambda_0)$
of elements of $\mathcal{L}eg (\Lambda_0)$
transverse to $N_l$ for all $l$
is dense in $\mathcal{L}eg (\Lambda_0)$
for the $\mathcal{C}^1$-topology.
Indeed,
let $\Lambda$ be an element of $\mathcal{L}eg (\Lambda_0)$
and let $\mathcal{U} (\Lambda) \subset M$
be a Weinstein neighborhood of $\Lambda$.
Fix a diffeomorphism $\Psi$
from $\mathcal{U} (\Lambda)$
to an open neighborhood $\mathcal{U} (j^10)$
of the zero section of $J^1\Lambda$
such that $\Psi (\Lambda) = j^10$
and $\Psi^{\ast} (dz - \lambda_{\can}) = \alpha$.
Denote for each $l$ by $N_l'$ the submanifold
$\Psi \big( N_l \cap \mathcal{U} (\Lambda) \big)$ of $\mathcal{U} (j^10)$,
and let $N' = \{N'_l \,,\, l \in \mathbb{Z} \}$.
Since the map
$j^1: \mathcal{C}^{\infty} (\Lambda) \rightarrow \mathcal{L}eg (j^10)$
that associates to a function its 1-jet
is a local homeomorphism
with respect to the $C^2$-topology on $\mathcal{C}^{\infty} (\Lambda)$
and the $\mathcal{C}^1$-topology on $\mathcal{L}eg (j^10)$
(cf.\ for instance \cite[Section 3]{Tsuboi3}),
for a sufficiently small $\mathcal{C}^2$-neighborhood $\mathcal{U}(0)$
of the zero function in $\mathcal{C}^{\infty} (\Lambda)$
the 1-jet of any $f \in \mathcal{U}(0)$ is in $\mathcal{U}(j^10)$
and the map $\mathcal{U}(0) \rightarrow \mathcal{L}eg (\Lambda_0)$
that sends $f$ to $\Psi^{-1} (j^1f)$
is a local homeomorphism.
By Thom's transversality theorem
(see for instance \cite[Corollary 4.10]{golubitsky2012stable}),
the subset $T^1_{N'}$ of $\mathcal{U} (0)$
consisting of functions with 1-jet transverse to $N_l'$ for all $l$
is dense in $\mathcal{U} (0)$
for the $\mathcal{C}^{\infty}$-topology,
hence also for the $\mathcal{C}^2$-topology.
Thus for any $\mathcal{C}^1$-neighborhood
$\mathcal{U}$ of $\Lambda$ in $\mathcal{L}eg (\Lambda_0)$
there exists $f \in T^1_{N'}$
such that $\Psi^{-1} (j^1f) \in \mathcal{U}$.
The Legendrian $\Psi^{-1} (j^1f)$
intersects each $N_l$ transversely,
and so belongs to $\mathcal{L}eg_N (\Lambda_0)$.
This shows that 
$\mathcal{L}eg_N (\Lambda_0)$
is $\mathcal{C}^1$-dense in $\mathcal{L}eg (\Lambda_0)$.

Using this,
we now prove that 
the set of contactomorphisms of $\big( L_k^{2n-1} (\underline{w}), \xi_0 \big)$
contact isotopic to the identity
that are non-degenerate with respect to $\alpha_0$
is $\mathcal{C}^1$-dense in $\Cont_0 \big( L_k^{2n-1} (\underline{w}), \xi_0 \big)$.
Consider the contact product of $\big( L_k^{2n-1} (\underline{w}), \xi_0 \big)$,
i.e.\ the product $L_k^{2n-1} (\underline{w}) \times L_k^{2n-1} (\underline{w}) \times \mathbb{R}$
endowed with the contact structure
given by the kernel of the contact form
$\pi_2^{\ast}\alpha_0 - e^{\theta} \pi_1^{\ast} \alpha_0$,
where $\pi_1$ and $\pi_2$ denote
the projections on the first and second factor respectively
and where $\theta$ is the coordinate in $\mathbb{R}$.
Denote by $\gr_{\alpha_0} (\phi)$ the graph
with respect to $\alpha_0$
of a contactomorphism $\phi$ of $\big( L_k^{2n-1} (\underline{w}), \xi_0 \big)$,
i.e.\ the Legendrian submanifold
$$
\gr_{\alpha_0} (\phi) = \big\{ \big(p, \phi(p), g(p)\big) \;\big\lvert\; p \in L_k^{2n-1} (\underline{w}) \big\}
$$
of the contact product,
where $g$ is the conformal factor of $\phi$ with respect to $\alpha_0$.
Denote by $\{R_t\}$ the Reeb flow
of $\pi_2^{\ast}\alpha_0 - e^{\theta} \pi_1^{\ast} \alpha_0$.
Let $0 = t_0 < \cdots < t_m = 2 \pi$
be a decomposition of the time interval $[0, 2 \pi]$
such that, for some $\epsilon > 0$,
$N_l := \bigcup_{t \in (t_l - \epsilon, t_{l + 1} + \epsilon)} \, R_t \big( \gr_{\alpha_0} (\id) \big)$
is a submanifold of $L_k^{2n-1} (\underline{w}) \times L_k^{2n-1} (\underline{w}) \times \mathbb{R}$
for every $l \in \{0, \cdots, m-1\}$.
A contactomorphism $\phi$ of $\big( L_k^{2n-1} (\underline{w}), \xi_0 \big)$
is non-degenerate with respect to $\alpha_0$
if and only if $\gr_{\alpha_0} (\phi)$
is transverse to $N_l$ for all $l$.
In other words,
using the notation of the first part of the proof,
$\phi$ is non-degenerate with respect to $\alpha_0$
if and only if $\gr_{\alpha_0} (\phi) \in \mathcal{L}eg_N (\gr_{\alpha_0} (\id))$,
where $N = \{ N_0, \cdots, N_{m-1} \}$.
Our result thus follows from the first part of the proof
and the fact that the map
$$
\gr_{\alpha_0}: \Cont_0 \big( L_k^{2n-1} (\underline{w}), \xi_0 \big)
\rightarrow \mathcal{L}eg (\gr_{\alpha_0}(\id))
$$
that associates to a contactomorphism its graph
is a local homeomorphism
with respect to the $\mathcal{C}^1$-topologies.
\end{proof}

We also remark the following fact.

\begin{lemma}\label{lemma: closed}
For any element $\widetilde{\phi}$ of $\widetilde{\Cont_0} \big( L_k^{2n-1} (\underline{w}), \xi_0 \big)$,
the sets $\mathcal{A} (\widetilde{\phi})$ and $\bar{\mathcal{A}} (\widetilde{\phi})$
are closed and nowhere dense in $\mathbb{R}$.
\end{lemma}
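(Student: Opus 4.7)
The plan is to establish closedness by a direct compactness argument, and nowhere density by invoking the generating function framework together with Sard's theorem. I will treat $\mathcal{A}(\widetilde{\phi})$; the argument for $\bar{\mathcal{A}}(\widetilde{\phi})$ is essentially identical, with $2\pi$ in place of $T_{\underline{w}}$ and the sphere in place of the lens space.

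For closedness, suppose $T_n \to T$ in $\mathcal{A}(\widetilde{\phi})$, so that there are points $p_n \in L_k^{2n-1}(\underline{w})$ with $\phi(p_n) = r_{T_n}(p_n)$ and $g(p_n) = 0$, where $\phi = \Pi(\widetilde{\phi})$ and $g$ is its conformal factor with respect to $\alpha_0$. By compactness of the lens space, I extract a subsequence $p_n \to p$. Continuity of $\phi$, of the Reeb flow in both variables, and of $g$ then force $\phi(p) = r_T(p)$ and $g(p) = 0$, so $p$ is a translated point of $\phi$ with translation $T$ and hence $T \in \mathcal{A}(\widetilde{\phi})$.

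For nowhere density, I would exploit the generating function construction recalled in Section~\ref{section: non-linear Maslov index}. Starting from a based family $F_t$ of conical generating functions for a contact isotopy representing $\widetilde{\phi}$, the sharp product with an explicit generating function for $r_{-T}$ produces, for each $T$, a conical generating function whose induced function $h_T$ on an auxiliary lens space has critical points of critical value zero in one-to-one correspondence with the translated points of $\phi$ of translation $T$ that lift to the sphere (cf.\ \cite[Proposition 2.22]{GKPS} for the discriminant case). Using the quadratic dependence of the Reeb generator on $T$, one checks that the full set of translations in a single Reeb period coincides with the set of critical values of a single function $h$ on a compact lens space, which is smooth in a neighborhood of its fibre critical points other than the origin. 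Sard's theorem applied in that smooth region shows that the critical values of $h$ form a set of Lebesgue measure zero; combined with the closedness established above, this set is closed with empty interior, hence nowhere dense. Finally, $\mathcal{A}(\widetilde{\phi})$ is obtained by shifting this bounded, closed, nowhere dense set by all integer multiples of $T_{\underline{w}}$, and a short compactness argument shows that such a periodic union of a closed bounded set is itself closed; its complement therefore remains dense.

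The main obstacle is pinning down the precise generating-function-theoretic correspondence between critical values of the auxiliary function $h$ and translations of translated points, in a form to which Sard applies. The $C^1$-with-Lipschitz-differential regularity of the conical generating functions is not a priori enough for Sard's theorem, but smoothness near fibre critical points (other than the origin) is exactly what one needs in the relevant region, so the argument goes through. A more robust alternative, which I would keep in reserve, is to use density of non-degenerate contactomorphisms (\autoref{lemma: non-degenerate}) together with a limiting argument: for non-degenerate $\phi$ the translated points are isolated and the spectrum within one Reeb period is finite, and one can hope to transfer the nowhere dense property to the general case by a careful upper-semicontinuity argument on $\mathcal{A}(\widetilde{\phi})$ under $\mathcal{C}^1$-limits.
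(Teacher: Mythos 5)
Your compactness argument for closedness is correct and, if anything, cleaner than the paper's, which extracts closedness from the same Sard-type argument it uses for nowhere density. The concern you raise about the $\mathcal{C}^1$-with-Lipschitz regularity of conical generating functions, and its resolution via smoothness near nonzero fibre critical points, is also on point.

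The nowhere-density part, however, contains a genuine gap. You assert that ``the full set of translations in a single Reeb period coincides with the set of critical values of a single function $h$ on a compact lens space,'' and you attribute this to ``the quadratic dependence of the Reeb generator on $T$.'' This step does not hold as stated: the generating function of $r_{-T}$ varies with $T$, and after forming the sharp product you obtain a $T$-parametrized family $h_T$, not one fixed function. The set $\{T : 0 \text{ is a critical value of } h_T\}$ is not the critical value set of any single $h$. What the paper actually does is a parametric trick: it takes a based family $F_t$ of conical generating functions for the \emph{concatenation} $\{\phi_{2t}\}_{t\in[0,\frac12]}\sqcup\{r_{-2\pi(2t-1)}\}_{t\in[\frac12,1]}$, forms the total function $f(x,t)=f_t(x)$ on $L_k^{2n(N+1)-1}\times[\frac12,1]$, shows that $0$ is a regular value of $f$ (citing Th\'eret), and then applies Sard's theorem to the smooth projection $p\colon f^{-1}(0)\to[\frac12,1]$, whose critical values parametrize exactly the translations in $\bar{\mathcal{A}}(\widetilde\phi)$ within one period. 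This reparametrization of the $T$-dependence into a time variable is the missing idea. Your reserve strategy — approximating by non-degenerate contactomorphisms from \autoref{lemma: non-degenerate} — would not repair this: nowhere density is not preserved under $\mathcal{C}^1$-limits, and the action spectrum is not upper semicontinuous in any way that would let you transfer the property from $\widetilde{\chi}_l$ to $\widetilde{\phi}$. Finally, you should also record the inclusion $\bar{\mathcal{A}}(\widetilde\phi)+T_{\underline w}\mathbb{Z}\subseteq\mathcal{A}(\widetilde\phi)\subseteq\bar{\mathcal{A}}(\widetilde\phi)+\frac{2\pi}{k}\mathbb{Z}$ used in the paper to pass from $\bar{\mathcal{A}}$ to $\mathcal{A}$, as this is not entirely obvious from the way you phrased the periodicity step.
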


\begin{proof}
Let $F_t: \mathbb{R}^{2n} \times \mathbb{R}^{2nN} \rightarrow \mathbb{R}$,
$t \in [0, 1]$,
be a based family of conical generating functions
for the concatenation
$$
\{ \phi_{2t} \}_{t \in [0, \frac{1}{2} ]} \sqcup \{ r_{- 2\pi (2t - 1)} \}_{t \in [\frac{1}{2}, 1]} \,. 
$$
For every $t \in [\frac{1}{2}, 1]$,
the $\mathbb{Z}_k$-orbits of translated points of $\bar{\phi}_1$ of translation $2\pi (2t - 1)$
are in 1--1 correspondence with the critical points of critical value zero
of $f_t: L_k^{2n(N+1)-1} \rightarrow \mathbb{R}$.
Consider the function
$$
f: L_k^{2n(N+1)-1} \times \Big[ \frac{1}{2}, 1 \Big] \rightarrow \mathbb{R} \,,\;
(x, t) \mapsto f_t (x) \,.
$$
As in \cite[Section 5.2 and Lemma 4.4]{Theret-Rotation},
zero is a regular value of $f$
and so $f^{-1}(0)$ is a submanifold of $L_k^{2n(N+1)-1} \times [\frac{1}{2}, 1]$.
Let $p: f^{-1}(0) \rightarrow \big[ \frac{1}{2}, 1 \big]$
be the composition of the inclusion
$f^{-1}(0) \hookrightarrow L_k^{2n(N+1)-1} \times [\frac{1}{2}, 1]$
with the projection on the second factor.
Then $(x, t) \in f^{-1} (0)$ is a critical point of $p$
(of critical value $t$)
if and only if $x$ is a critical point of $f_t$
(of critical value zero).
Thus the $\mathbb{Z}_k$-orbits of translated points of $\bar{\phi}_1$
 of translation $2\pi (2t - 1)$
are in 1--1 correspondence with the critical points of $p$
of critical value $t$.
It follows that
$$
\bar{\mathcal{A}} (\widetilde{\phi}) =
2\pi \Big( 2 \, p \big( \Crit (p) \big) - 1 \Big) + 2 \pi \cdot \mathbb{Z} \,,
$$
and so $\bar{\mathcal{A}} (\widetilde{\phi})$ is closed and nowhere dense.
Since
$$
\bar{\mathcal{A}} (\widetilde{\phi}) + T_{\underline{w}} \cdot \mathbb{Z}
\subseteq \mathcal{A} (\widetilde{\phi})
\subseteq \bar{\mathcal{A}} (\widetilde{\phi}) + \frac{2\pi}{k} \cdot \mathbb{Z} \,,
$$
we deduce that $ \mathcal{A} (\widetilde{\phi})$
is also closed and nowhere dense.
\end{proof}

We now prove that the spectral selectors
satisfy the properties listed in \autoref{theorem: main}.

%-----
\subsection*{Spectrality}

We have to show that
$$
c_j (\widetilde{\phi}) \in \bar{\mathcal{A}} (\widetilde{\phi})
$$
for every $\widetilde{\phi}$
in $\widetilde{\Cont_0} \big( L_k^{2n-1} (\underline{w}), \xi_0 \big)$.
Suppose by contradiction that this is not the case.
Since $\bar{\mathcal{A}} (\widetilde{\phi})$ is closed
(by \autoref{lemma: closed}),
there is then $\epsilon > 0$ such that
$[ c_j (\widetilde{\phi}) - \epsilon,  c_j (\widetilde{\phi}) + \epsilon ]
\subset \mathbb{R} \smallsetminus \bar{\mathcal{A}} (\widetilde{\phi})$.
By the first statement
of \autoref{proposition: nonlinear Maslov index general k}
(\ref{relation with translated points general Maslov})
we thus have
$$
\mu \big( \widetilde{ r_{- ( c_j (\widetilde{\phi}) - \epsilon )} } \cdot \widetilde{\phi} \big)
= \mu \big( \widetilde{ r_{- ( c_j (\widetilde{\phi}) + \epsilon )} } \cdot \widetilde{\phi} \big) \,,
$$
but this contradicts the definition of $c_j (\widetilde{\phi})$.

%----
\subsection*{Normalization}

It follows from
\autoref{proposition: nonlinear Maslov index general k}
(\ref{Reeb flow general Maslov})
that
$$
c_j (\widetilde{\id}) =
\begin{cases}
\cdots \\
- 4 \pi  & \text{ for } j = - 6n + 1, \cdots, - 4n \\
- 2 \pi  & \text{ for } j = - 4n + 1, \cdots, - 2n \\
0   & \text{ for } j = -2n + 1, \cdots, 0 \\
2 \pi   & \text{ for } j = 1, \cdots, 2n \\
4 \pi   & \text{ for } j = 2n + 1, \cdots, 4n \\
\cdots 
\end{cases}
$$
In particular,
$c_0 (\widetilde{\id}) = 0$.

%----
\subsection*{Relation with translated points}

If all the translated points
of $\Pi \big( \mathcal{L} (\widetilde{\phi})\big)$
are non-degenerate,
it follows from the last statement of
\autoref{proposition: nonlinear Maslov index general k}
(\ref{relation with translated points general Maslov})
that the spectral selectors
$\{ c_j(\widetilde{\phi}) \,,\, j \in \mathbb{Z} \}$
are all distinct.
Suppose now that
$$
c_{j - 1} (\widetilde{\phi}) < c_{j} (\widetilde{\phi}) = c_{j + 1} (\widetilde{\phi}) = \cdots = c_{j + m} (\widetilde{\phi})
= T < c_{j + m + 1} (\widetilde{\phi})
$$
for some $j$ and $1 \leq m \leq 2n-1$.
Then for $\epsilon > 0$ small enough we have 
$$
\lvert\, \mu \big( \widetilde{r_{-(T - \epsilon)}} \cdot \widetilde{\phi} \big)
- \mu \big( \widetilde{r_{-(T + \epsilon)}} \cdot \widetilde{\phi} \big) \,\lvert
= m + 1
$$
and $\mu \big( \widetilde{r_{-(T - \epsilon)}} \cdot \widetilde{\phi} \big) = - j + 1$.
It thus follows from
\autoref{proposition: nonlinear Maslov index general k}
(\ref{relation with translated points general Maslov})
that if either $k$ is even or $j$ is odd or $m > 1$
then $\Pi \big( \mathcal{L} (\widetilde{\phi})\big)$
has infinitely many translated points
of translation $T$.
If moreover $k$ is prime then
\autoref{proposition: nonlinear Maslov index general k}
(\ref{relation with translated points general Maslov})
also implies that
the set of translated points of translation $T$ of $\Pi (\widetilde{\phi})$
has cohomological index greater or equal than $m$,
and greater or equal than $m + 1$
if either $k = 2$ or $j$ is odd.

%----
\subsection*{Non-degeneracy}

Assume that
$$
c_{-2n + 1} (\widetilde{\phi}) = c_0 (\widetilde{\phi}) = 0 \,.
$$
Then for $\epsilon > 0$ small enough
we have
$$
\lvert\, \mu \big( \widetilde{r_{\epsilon}} \cdot \widetilde{\phi} \big)
- \mu \big( \widetilde{r_{- \epsilon}} \cdot \widetilde{\phi} \big) \,\lvert
= 2n
$$
and $\mu \big( \widetilde{r_{\epsilon}} \cdot \widetilde{\phi} \big) = 2n$.
By \autoref{proposition: nonlinear Maslov index general k}
(\ref{relation with translated points general Maslov})
we then conclude
that $\Pi (\widetilde{\phi})$ is the identity.
Since, by spectrality, $0 \in \bar{\mathcal{A}} (\widetilde{\phi})$,
we have in fact that $\Pi \big( \mathcal{L} (\widetilde{\phi})\big)$ is the identity.

%----
\subsection*{Composition with the Reeb flow}

By definition of the spectral selectors,
for every $\widetilde{\phi}$ and every $T \in \mathbb{R}$
we have
\begin{align*}
c_j (\widetilde{r_T} \cdot \widetilde{\phi})
&= \inf \{\, \underline{T} \in \R  \;\ |\; \mu ( \widetilde{ r_{- \underline{T}} } \cdot \widetilde{r_T} \cdot \widetilde{\phi}) \leq - j  \,\} \\
&= \inf \{\, \underline{T} - T \in \R  \;\ |\; \mu (\widetilde{r_{- \underline{T}+T}} \cdot \widetilde{\phi}) \leq - j  \,\} + T \\
&= c_j (\widetilde{\phi}) + T \,.
\end{align*}

%----
\subsection*{Periodicity}

Using the previous property
and \autoref{proposition: nonlinear Maslov index general k}
(\ref{concatenation with the Reeb flow general Maslov})
we have
$$
c_j (\widetilde{\phi}) + 2 \pi = c_j (\widetilde{r_{2 \pi}} \cdot \widetilde{\phi}) = c_{j + 2n} (\widetilde{\phi}) \,.
$$

%----
\subsection*{Monotonicity}

Suppose that $\widetilde{\phi} \leq \widetilde{\psi}$.
Then $\widetilde{r_{-T}} \cdot \widetilde{\phi} \leq \widetilde{r_{-T}} \cdot \widetilde{\psi}$
and so,
by \autoref{proposition: nonlinear Maslov index general k}
(\ref{monotonicity general Maslov}),
$$
\mu (\widetilde{r_{-T}} \cdot \widetilde{\phi}) \leq \mu (\widetilde{r_{-T}} \cdot \widetilde{\psi}) \,.
$$
This implies that $c_j (\widetilde{\phi}) \leq c_j (\widetilde{\psi})$.

%----
\subsection*{Continuity}

Suppose that $\widetilde{\phi} \cdot \widetilde{\psi}^{-1}$ is represented
by a contact isotopy with Hamiltonian function
$H_t: L_k^{2n-1} (\underline{w}) \rightarrow \mathbb{R}$
with respect to $\alpha_0$.
Let $m(t) = \min H_t$ and $M(t) = \max H_t$.
The flows of $m$ and $M$ are respectively
$\big\{ r_{\int^t_0 m} \big\}$ and $\big\{ r_{\int^t_0 M} \big\}$,
thus
\begin{equation*}
\widetilde{r_{\int^1_0 m}}
\leq \widetilde{\phi} \cdot \widetilde{\psi}^{-1}
\leq \widetilde{r_{\int^1_0 M}}  
\end{equation*}
and so
\begin{equation*}
\widetilde{r_{\int^1_0 m}} \cdot \widetilde{\psi} \leq \widetilde{\phi} \leq \widetilde{r_{\int^1_0 M}} \cdot \widetilde{\psi} \,.
\end{equation*}
By the composition with the Reeb flow property
and monotonicity we thus have
\begin{equation}\label{equation: continuity in section 3}
\int_0^1 \min H_t \, dt
\leq c_j (\widetilde{\phi}) - c_j (\widetilde{\psi})
\leq \int_0^1 \max H_t \, dt \,.
\end{equation}
We now show that \eqref{equation: continuity in section 3}
implies that each $c_j$ is continuous
with respect to the $\mathcal{C}^1$-topology.
Notice first that the Shelukhin--Hofer norm
$\nu_{\alpha}: \widetilde{\Cont_0} \big( L_k^{2n-1} (\underline{w}), \xi_0 \big) \rightarrow \mathbb{R}$,
which is defined by
$$
\nu_{\alpha} (\widetilde{\phi})
= \inf \int_0^1 \max \, \lvert H_t \lvert \, dt
$$
with the infimum taken over all contact Hamiltonian functions $H_t$
whose flow represents $\widetilde{\phi}$,
is continuous
with respect to the $\mathcal{C}^1$-topology.
This follows from the main result of \cite{AA2},
or can be seen directly as follows.
Since $\widetilde{\Cont_0} \big( L_k^{2n-1} (\underline{w}), \xi_0 \big)$
is a topological group,
it is enough to show that $\nu_{\alpha}$
is $\mathcal{C}^1$-continuous at the identity,
i.e.\ that for every $\epsilon > 0$
there is a $\mathcal{C}^1$-neighborhood $\widetilde{\mathcal{U}}$
of $\widetilde{\id}$ in $\widetilde{\Cont_0} \big(  L_k^{2n-1} (\underline{w}), \xi_0 \big)$
such that for every $\widetilde{\phi} \in \widetilde{\mathcal{U}}$
we have $\nu_{\alpha} (\widetilde{\phi}) < \epsilon$.
As in the proof of \autoref{lemma: non-degenerate}
we consider the product $L_k^{2n-1} (\underline{w}) \times L_k^{2n-1} (\underline{w}) \times \mathbb{R}$
endowed with the contact structure
given by the kernel of the contact form
$\pi_2^{\ast} \alpha_0 - e^{\theta} \pi_1^{\ast} \alpha_0$.
Applying the Weinstein theorem
we can find a neighborhood $\mathcal{U} ( \gr_{\alpha_0} (\id) )$
of $\gr_{\alpha_0} (\id)$,
a neighborhood $\mathcal{U} (j^10)$
of the zero section of $J^1 L_k^{2n-1} (\underline{w})$
of the form $\mathcal{U}(j^10) = \underline{\mathcal{U}}(j^10) \times (-\epsilon', \epsilon')$
with $\epsilon' < \epsilon$
and a diffeomorphism $\Psi$ from $\mathcal{U} ( \gr_{\alpha_0} (\id) )$ to $\mathcal{U} (j^10)$
with $\Psi ( \gr_{\alpha_0} (\id) ) = j^10$ and
$\Psi^{\ast} (dz - \lambda_{\can}) = \pi_2^{\ast} \alpha_0 - e^{\theta} \pi_1^{\ast} \alpha_0$.
Since the map $j^1: \mathcal{C}^{\infty} \big( L_k^{2n-1} (\underline{w}) \big)) \rightarrow \mathcal{L}eg (j^10)$
that associates to a function its 1-jet
is a local homeomorphism
with respect to the $\mathcal{C}^2$-topology on $\mathcal{C}^{\infty} \big( L_k^{2n-1} (\underline{w}) \big)$
and the $\mathcal{C}^1$-topology on $\mathcal{L}eg (j^10)$,
we can find a convex $\mathcal{C}^2$-neighborhood $\mathcal{U}(0)$
of the zero function in $\mathcal{C}^{\infty} \big(L_k^{2n-1} (\underline{w}) \big)$
such that $j^1f \in \mathcal{U} (j^10)$ for any $f \in \mathcal{U}(0)$,
and the map $\mathcal{U}(0) \mapsto \mathcal{L}eg ( \gr_{\alpha_0} (\id) )$
that sends $f$ to $\Psi^{-1} (j^1f)$ is a local homeomorphism.
Since the map $\gr_{\alpha_0}: \Cont_0 \big( L_k^{2n-1} (\underline{w}), \xi_0 \big)
\rightarrow \mathcal{L}eg \big( \gr_{\alpha_0}(\id)\big)$
that associates to a contactomorphism its graph
is a local homeomorphism
with respect to the $\mathcal{C}^1$-topologies,
we obtain a map $\mathcal{U}(0) \rightarrow  \Cont_0 \big( L_k^{2n-1} (\underline{w}), \xi_0 \big)$
that associates to a function $f$
a contactomorphism $\phi$ with $j^1f = \Psi (\gr_{\alpha_0}(\phi))$,
which is a homeomorphism on its image $\mathcal{U}$.
Since $\mathcal{U}(0)$ is convex,
$\mathcal{U}$ is simply connected;
let thus $\widetilde{\mathcal{U}}$ be the open neighborhood
of $\widetilde{\id}$ in $\widetilde{\Cont_0} \big( L_k^{2n-1} (\underline{w}), \xi_0 \big)$
that projects homeomorphically to $\mathcal{U}$.
Consider now any $\widetilde{\phi}$ in $\widetilde{\mathcal{U}}$.
Let $f: L_k^{2n-1} (\underline{w}) \rightarrow (- \epsilon, \epsilon)$
be the function in $\mathcal{U}(0)$
such that $j^1f = \Psi \big(\gr_{\alpha_0} \big(\Pi (\widetilde{\phi}) \big) \big)$.
Since $\mathcal{U}(0)$ is convex,
$f_t := tf$ is in $\mathcal{U}(0)$ for every $t \in [0, 1]$,
and so for every $t \in [0, 1]$ there is $\phi_t \in \mathcal{U}$
with $j^1f_t = \Psi \big( \gr_{\alpha_0} (\phi_t) \big)$.
Consider the two Legendrian isotopies
$j^1f_t$ and $\gr_{\alpha_0} (\phi_t)$,
with parametrisations given respectively by
$$
i_1: [0, 1] \times j^10 \rightarrow J^1 L_k^{2n-1} (\underline{w}) \,,\;
i_1 \big( t, (x, 0, 0) \big) = j^1 f_t (x)
$$
and
$$
i_2: [0, 1] \times \gr_{\alpha_0} (\id)
\rightarrow L_k^{2n-1} (\underline{w}) \times L_k^{2n-1} (\underline{w}) \times \mathbb{R} \,,\;
i_2 \big( t, (x, x, 0) \big) = \big( x, \phi_t (x), g_t(x) \big) \,,
$$
where $g_t$ is the conformal factor of $\phi_t$.
Let $H_t$ be the contact Hamiltonian function
of the contact isotopy $\{\phi_t\}$.
Then
$$
H_t \big(\phi_t(x)\big)
= (\pi_2^{\ast} \alpha_0 - e^{\theta} \pi_1^{\ast} \alpha_0)
\Big( \frac{d}{dt} \, i_2 \big( t, (x, x, 0) \big) \Big)
= (dz - \lambda_{\can})
\Big( \frac{d}{dt} \, i_1 \big( t, (x, 0, 0) \big) \Big)
= f(x) \,,
$$
and so $\lvert H_t \lvert < \epsilon$.
Moreover
$[ \{ \phi_t \}_{t \in [0, 1]} ] = \widetilde{\phi}$,
because $\widetilde{\phi} \cdot [ \{ \phi_t \}_{t \in [0, 1]} ]^{-1}$
can be represented by a loop in $\mathcal{U}$,
which is simply connected.
We thus conclude that
$\nu_{\alpha} (\widetilde{\phi}) < \epsilon$,
as we wanted.
 
Using \eqref{equation: continuity in section 3}
and the fact that $\nu_{\alpha}$ is $\mathcal{C}^1$-continuous
we now deduce that each $c_j$ is $\mathcal{C}^1$-continuous.
Let $\widetilde{\phi} \in \widetilde{\Cont_0} \big( L_k^{2n-1} (\underline{w}), \xi_0 \big)$.
By $\mathcal{C}^1$-continuity of $\nu_{\alpha}$,
for any $\epsilon > 0$ there is a $\mathcal{C}^1$-neighborhood
$\widetilde{\mathcal{U}}$ of $\widetilde{\id}$ in $\widetilde{\Cont_0} \big( L_k^{2n-1} (\underline{w}), \xi_0 \big)$
such that $\left. \nu_{\alpha} \right\lvert_{\widetilde{\mathcal{U}}} < \epsilon$.
Then $\widetilde{\mathcal{V}} := \widetilde{\mathcal{U}} \cdot \widetilde{\phi}$
is a $\mathcal{C}^1$-neighborhood of $\widetilde{\phi}$
such that for every $\widetilde{\psi} \in \widetilde{\mathcal{V}}$
we have
$$
\nu_{\alpha} ( \widetilde{\phi} \cdot \widetilde{\psi}^{-1} )
= \nu_{\alpha} ( \widetilde{\psi} \cdot \widetilde{\phi}^{-1} )
< \epsilon \,.
$$
This implies that
there is a contact Hamiltonian function $H_t$
whose flow represents $\widetilde{\phi} \cdot \widetilde{\psi}^{-1}$
and satisfies $\int_0^1 \max \, \lvert H_t\rvert \, dt < \epsilon$.
Using \eqref{equation: continuity in section 3}
we thus conclude that for every $\widetilde{\psi} \in \widetilde{\mathcal{V}}$
we have $\big\lvert\, c_j (\widetilde{\phi}) - c_j (\widetilde{\psi}) \,\big\rvert < \epsilon$,
and so that $c_j$ is $\mathcal{C}^1$-continuous.

%----
\subsection*{Triangle inequality}

We have to prove that
if either $k$ is even or $j$ is even
then
\[
c_{j + l} (\widetilde{\phi} \cdot \widetilde{\psi})
\leq c_j (\widetilde{\phi}) + \left\lceil c_l (\widetilde{\psi}) \right\rceil_{T_{\underline{w}}} \,.
\]
Let $T = c_j (\widetilde{\phi})$
and $N = \lceil c_l (\widetilde{\psi}) \rceil_{T_{\underline{w}}}$.
Since $\Pi: \widetilde{\Cont_0} \big( L_k^{2n-1} (\underline{w}), \xi_0 \big)
\rightarrow \Cont_0 \big( L_k^{2n-1} (\underline{w}), \xi_0 \big)$
is a local homeomorphism,
by the continuity property of the spectral selectors
and \autoref{lemma: non-degenerate}
we can assume that $\Pi (\widetilde{\phi})$ is non-degenerate
with respect to $\alpha_0$.
By the relation with translated points property,
the spectral selectors of $\widetilde{\phi}$ are thus all distinct,
and so $\mu \big( \widetilde{r_{-T}} \cdot \widetilde{\phi}\big) = - j$.
Using the fact that $\widetilde{r_{-N}}$ commutes with $\widetilde{\phi}$
(since $\{r_t\}$ has period $T_{\underline{w}}$)
and the triangle inequality for the non-linear Maslov index
(\autoref{proposition: nonlinear Maslov index general k}
(\ref{triangle inequality general Maslov}))
we thus have
\begin{equation*}
\mu \big( \widetilde{r_{- (T + N)}} \cdot \widetilde{\phi} \cdot \widetilde{\psi} \big)
= \mu \big( (\widetilde{r_{-T}} \cdot \widetilde{\phi}) \cdot (\widetilde{r_{-N}} \cdot \widetilde{\psi}) \big)
\leq \mu \big( \widetilde{r_{-T}} \cdot \widetilde{\phi}\big) + \mu \big( \widetilde{r_{-N}} \cdot \widetilde{\psi} \big)
\leq - (j + l) \,.
\end{equation*}
By definition of $c_{j + l}$ we conclude that
$$
c_{j + l} \big( \widetilde{\phi} \cdot \widetilde{\psi} \big) \leq T + N
= c_j (\widetilde{\phi}) + \left\lceil c_l (\widetilde{\psi}) \right\rceil_{T_{\underline{w}}} \,.
$$

%----
\subsection*{Conjugation invariance}

We have to prove that
\begin{equation}\label{equation: conjugation invariance section 3}
\left\lceil c_j (\tpsi \cdot \widetilde{\phi} \cdot \tpsi^{-1}) \right\rceil_{T_{\underline{w}}}
= \left\lceil c_j (\widetilde{\phi}) \right\rceil_{T_{\underline{w}}} \,.
\end{equation}
Assume first that $\Pi(\widetilde{\phi})$ does not have discriminant points.
Let $\{\psi_t\}_{t \in [0, 1]}$ be a contact isotopy representing $\widetilde{\psi}$,
and consider the homotopy $\widetilde{\psi}_s = [ \{ \psi_{st} \}_{t \in [0, 1]} ]$
from $\widetilde{\psi}_0 = \widetilde{\id}$ to $\widetilde{\psi}_1 = \widetilde{\psi}$.
By the continuity property,
the map
$$
s \mapsto c_j (\widetilde{\psi}_s \cdot \widetilde{\phi} \cdot \widetilde{\psi}_s^{-1})
\in \bar{\mathcal{A}} (\widetilde{\psi}_s \cdot \widetilde{\phi} \cdot \widetilde{\psi}_s^{-1})
$$
is continuous.
Moreover,
$c_j (\widetilde{\psi}_s \cdot \widetilde{\phi} \cdot \widetilde{\psi}_s^{-1})
\in \mathbb{R} \smallsetminus T_{\underline{w}} \cdot \mathbb{Z}$
for all $s \in [0,1]$.
Indeed,
if we had
$c_j (\widetilde{\psi}_{\underline{s}} \cdot \widetilde{\phi} \cdot \widetilde{\psi}_{\underline{s}}^{-1})
\in T_{\underline{w}} \cdot \mathbb{Z}$
for some $\underline{s}$ then, by the spectrality property,
$\Pi (\widetilde{\psi}_{\underline{s}} \cdot \widetilde{\phi} \cdot \widetilde{\psi}_{\underline{s}}^{-1})$
would have discriminant points.
But this is absurd,
because the discriminant points of
$\Pi (\widetilde{\psi}_{\underline{s}} \cdot \widetilde{\phi} \cdot \widetilde{\psi}_{\underline{s}}^{-1})$
are in bijection with the discriminant points of  $\Pi (\widetilde{\phi})$.
We thus obtain \eqref{equation: conjugation invariance section 3}
in this case.

The general case can be obtained as follows.
Given any $\widetilde{\phi} \in \widetilde{\Cont_0} \big( L_k^{2n-1} (\underline{w}), \xi_0 \big)$,
since (by \autoref{lemma: closed})
$\mathcal{A} (\widetilde{\phi})$ is nowhere dense,
there is a sequence $(\epsilon_l)$ of positive real numbers
with $\epsilon_l \to 0$ such that,
for every $l$,
$\Pi (\widetilde{r_{- \epsilon_l}} \cdot  \widetilde{\phi})$
does not have discriminant points.
Pose $\widetilde{\chi_l} = \widetilde{r_{- \epsilon_l}} \cdot  \widetilde{\phi}$.
By the first part of the proof we have
\begin{equation}\label{equation: in proof conjugation invariance}
\left\lceil c_j (\tpsi \cdot \widetilde{\chi_l} \cdot \tpsi^{-1}) \right\rceil_{T_{\underline{w}}}
= \Big\lceil c_j (\widetilde{\chi}_l) \Big\rceil_{T_{\underline{w}}}
\end{equation}
for all $l$.
Since $(\widetilde{\chi}_l)$ converges to $\widetilde{\phi}$
in the $\mathcal{C}^1$-topology
and $\widetilde{\chi}_l \leq \widetilde{\phi}$
for all $l$,
by the continuity and monotonicity properties of the spectral selectors
for $l$ big enough we have
$\left\lceil c_j (\widetilde{\phi}) \right\rceil_{T_{\underline{w}}}
= \left\lceil c_j (\widetilde{\chi}_l) \right\rceil_{T_{\underline{w}}}$
and $\left\lceil c_j (\tpsi \cdot \widetilde{\phi} \cdot \tpsi^{-1}) \right\rceil_{T_{\underline{w}}}
= \left\lceil c_j (\tpsi \cdot \widetilde{\chi}_l \cdot \tpsi^{-1}) \right\rceil_{T_{\underline{w}}}$.
Equation \eqref{equation: in proof conjugation invariance}
thus gives the desired result
\eqref{equation: conjugation invariance section 3}.

%----
\subsection*{Poincar\'e duality}

We first notice that
if $\Pi \big( \mathcal{\widetilde{\phi}} \big)$
does not have discriminant points
then
\begin{equation}\label{equation: max}
\left\lfloor c_j \big(\widetilde{\phi}\big) \right\rfloor_{T_{\underline{w}}}
= \max \left\{\, N \in T_{\underline{w}} \cdot \mathbb{Z}  \;\ \Big\lvert\;
\mu (\widetilde{r_{-N}} \cdot \widetilde{\phi}) > - j  \,\right\} \,.
\end{equation}
Indeed,
by spectrality we have
$c_j (\widetilde{\phi}) \notin T_{\underline{w}} \cdot \mathbb{Z}$
and thus $\underline{N} :=
\big\lfloor c_j \big(\widetilde{\phi}\big) \big\rfloor_{T_{\underline{w}}}  < c_j (\widetilde{\phi})$.
This implies that
$\mu (\widetilde{r_{- \underline{N}}} \cdot \widetilde{\phi}) > - j$,
and so the inequality $\leq$ in \eqref{equation: max}.
On the other hand,
the opposite inequality follows
(without any assumption on $\widetilde{\phi}$)
from \eqref{equation: min}
and the fact that
$\big\lfloor c_j \big(\widetilde{\phi}\big) \big\rfloor_{T_{\underline{w}}} + T_{\underline{w}}
\geq \big\lceil c_j \big(\widetilde{\phi}\big) \big\rceil_{T_{\underline{w}}}$.

We now prove the Poincar\'e duality property,
i.e.\ that 
\begin{equation}\label{equation: PD in section 3}
\left\lceil c_j (\widetilde{\phi}) \right\rceil_{T_{\underline{w}}} =
- \left\lfloor c_{- j - (2n-1)} (\widetilde{\phi}^{-1}) \right\rfloor_{T_{\underline{w}}}
\end{equation}
for any $\widetilde{\phi} \in \widetilde{\Cont_0} \big( L_k^{2n-1} (\underline{w}), \xi_0 \big)$.
Assume first that $\Pi \big( \mathcal{\widetilde{\phi}} \big)$
does not have discriminant points.
Then \autoref{proposition: nonlinear Maslov index general k}
(\ref{PD general Maslov})
implies that
\begin{equation}\label{equation: in PD Simon}
\mu ( \widetilde{r_{N}} \cdot \widetilde{\phi} )
+ \mu ( \widetilde{\phi}^{-1} \cdot  \widetilde{r_{-N}} ) = 2n
\end{equation}
for every $N$ that is a multiple of $T_{\underline{w}}$.
The Poincar\'e duality \eqref{equation: PD in section 3}
then follows from
\eqref{equation: min}, \eqref{equation: max},
\eqref{equation: in PD Simon}
and the fact that
$\widetilde{\phi}^{-1} \cdot  \widetilde{r_{-N}} = \widetilde{r_{-N}} \cdot \widetilde{\phi}^{-1}$
for every $N$ that is a multiple of $T_{\underline{w}}$.

For a general $\widetilde{\phi}$,
as in the proof of conjugation invariance
we can find a sequence $(\widetilde{\chi}_l)$
that converges to $\widetilde{\phi}$ in the $\mathcal{C}^1$-topology
and such that, for all $l$,
$\widetilde{\chi}_l \leq \widetilde{\phi}$
and $\Pi (\widetilde{\chi}_l)$ does not have discriminant points.
By the first part of the proof we have
$\left\lceil c_j (\widetilde{\chi}_l) \right\rceil_{T_{\underline{w}}} =
- \left\lfloor c_{- j - (2n-1)} (\widetilde{\chi}_l^{-1}) \right\rfloor_{T_{\underline{w}}}$.
By monotonicity and continuity of the spectral selectors
we thus obtain \eqref{equation: PD in section 3}
also in this case.

%%%%%%%%%%%%%%%%%%%%%%%%%%%%%%%%%%%%%%%%%%%%%%%%%

\section{Non-shortening of the standard Reeb flow with respect to the discriminant and oscillation norms}\label{section: Reeb geodesic}

Recall from \cite{CS} that the discriminant norm $\nu_{\dis}$
on the universal cover $\widetilde{\Cont_0} (M, \xi)$
of the identity component of the contactomorphism group
of a closed contact manifold $(M, \xi)$
is the word norm 
associated to the generating set $\mathcal{D}$
formed by elements $\widetilde{\phi}$
that can be represented by an embedded contact isotopy,
i.e.\ a contact isotopy $\{\phi_t\}_{t \in [0, 1]}$
such that $\phi_t \circ \phi_s^{-1}$ has no discriminant points
for all $ s \ne t \in [0,1]$.
Recall also from \cite{PA} that
the discriminant length of a contact isotopy $\{\phi_t\}_{t \in [0, 1]}$
is the minimal $N$
such that there is a decomposition
$0 = t_0 < \cdots < t_N = 1$ of the time interval $[0, 1]$
with $\{ \phi_t \}_{t \in [t_{j}, t_{j + 1}]}$ embedded
for all $j = 0, \cdots, N - 1$.

Consider the discriminant norm
on $\widetilde{\Cont_0} \big( L_k^{2n-1} (\underline{w}), \xi_0 \big)$.
For every positive real number $T$
we have
\begin{equation}\label{equation: estimate with mu}
\nu_{\dis} (\widetilde{r_{T}}) \geq \frac{2n \lceil \frac{T}{2\pi} \rceil + 1}{2n+1} \,.
\end{equation}
Indeed,
let $N = \nu_{\dis} (\widetilde{r_{T}})$
and write $\widetilde{r_{T}} = \prod_{j = 1}^N \widetilde{\phi}_j$
with $\widetilde{\phi}_j \in \mathcal{D}$.
Then, by \autoref{proposition: nonlinear Maslov index general k}
(\ref{Reeb flow general Maslov}),
(\ref{triangle inequality general Maslov}),
(\ref{examples general Maslov})
and the first statement of
(\ref{relation with translated points general Maslov})
we have
$$
2n \, \left\lceil \frac{T}{2\pi} \right\rceil = \mu (\widetilde{r_{T}})
\leq \sum_{j = 1}^N \mu (\widetilde{\phi_j}) + N - 1
\leq 2n N + N - 1 \,.
$$
Similarly,
in the case of projective space
we have
\begin{equation}\label{equation: estimate with mu projective space}
\nu_{\dis} (\widetilde{r_{T}}) \geq \left\lceil \frac{T}{2\pi} \right\rceil \,.
\end{equation}
The estimates \eqref{equation: estimate with mu}
and \eqref{equation: estimate with mu projective space}
are better than those obtained in \cite{CS} and \cite{GKPS},
since in those references
just the quasimorphism property of the non-liner Maslov index
(\autoref{proposition: nonlinear Maslov index general k}
(\ref{quasimorphism general Maslov}))
is used
and not the triangle inequality
(\autoref{proposition: nonlinear Maslov index general k}
(\ref{triangle inequality general Maslov})).
However,
they are still not optimal.
Indeed, writing
$$
0 < T_0 := \frac{T}{ \left\lfloor \frac{k}{2\pi} \, T \right\rfloor + 1} < \frac{2\pi}{k}
$$
we have
\begin{equation}\label{equation: decomposition}
\{r_{Tt}\}_{t \in [0, 1]} = \{ \underbrace{r_{T_0t} \circ \cdots \circ r_{T_0t}}_{ \left\lfloor \frac{k}{2\pi} \, T \right\rfloor + 1} \}_{t \in [0, 1]} \,.
\end{equation}
Since the minimal period of a closed Reeb orbit
of $\alpha_0$ on $L_k^{2n-1} (\underline{w})$ is $\frac{2\pi}{k}$
we have that $\{r_{T_0t}\}_{t \in [0, 1]} \in \mathcal{D}$,
and so $\{r_{Tt}\}_{t \in [0, 1]}$ has discriminant length
smaller or equal than $\left\lfloor \frac{k}{2\pi} \, T \right\rfloor + 1$;
this length is actually equal to $\left\lfloor \frac{k}{2\pi} \, T \right\rfloor + 1$,
because for any interval $[t_0, t_1]$ of length $t_1 - t_0 \geq \frac{2 \pi}{k}$
the contact isotopy $\{r_{t}\}_{t \in [t_0, t_1]}$ is not embedded.
For instance,
in the case of projective space the discriminant length
of $\{ r_{2 \pi m t} \}_{t \in [0, 1]}$
is thus $2m + 1$,
while \eqref{equation: estimate with mu projective space}
only gives $\nu_{\dis} (\widetilde{r_{2 \pi m}}) \geq m$.
In this section we prove the optimal estimates
for the discriminant and oscillation lengths
of the standard Reeb flow of lens spaces with equal weights
using the spectral selectors
defined in \autoref{section: action selectors}.
The main advantage of using the spectral selectors
is that while
(by \autoref{proposition: nonlinear Maslov index general k}
(\ref{relation with translated points general Maslov}))
the non-linear Maslov index only jumps in the presence
of discriminant points of the lift of a contact isotopy
of a lens space to the sphere,
so that in particular for instance
$\mu (\widetilde{r_{T}}) = 2n \, \left\lceil \frac{T}{2\pi} \right\rceil$,
the spectral selectors
allow to distinguish $\widetilde{r_{T}}$
also for different values of $T$ in $[0, 2\pi]$,
indeed by \autoref{theorem: main} (\ref{composition with the Reeb flow main})
we have for instance $c_0 (\widetilde{r_{T}}) = T$.

We start with the following lemma.

\begin{lemma}\label{lemma: D}
For any element $\widetilde{\phi}$ of $\widetilde{\Cont_0} \big( L_k^{2n-1} (\underline{w}), \xi_0 \big)$,
if $\widetilde{\phi} \in \mathcal{D}$
then $c_0 (\widetilde{\phi}) < T_{\underline{w}}$.
\end{lemma}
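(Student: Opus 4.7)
My plan is to combine the spectrality and $\mathcal{C}^1$-continuity of $c_0$ already established above with a simple connectedness argument applied to the path from $\widetilde{\id}$ to $\widetilde{\phi}$. First I pick any embedded contact isotopy $\{\phi_t\}_{t \in [0, 1]}$ representing $\widetilde{\phi}$ and consider the path $t \mapsto \widetilde{\phi}_t := [\{\phi_{ut}\}_{u \in [0, 1]}]$ from $\widetilde{\id}$ to $\widetilde{\phi}$; this path is $\mathcal{C}^1$-continuous because $t \mapsto \phi_t$ is smooth and $\Pi$ is a local homeomorphism. The continuity property then gives continuity of $f(t) := c_0 ( \widetilde{\phi}_t )$, with $f(0) = 0$ by normalization, while spectrality gives $f(t) \in \bar{\mathcal{A}} ( \widetilde{\phi}_t )$ for every $t \in [0, 1]$.

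The central step will be to show that $\bar{\mathcal{A}} ( \widetilde{\phi}_t )$ is disjoint from $T_{\underline{w}} \cdot \mathbb{Z}$ for every $t \in (0, 1]$. If some $\tau = m T_{\underline{w}}$ belonged to $\bar{\mathcal{A}} ( \widetilde{\phi}_t )$, then some $\bar{p} \in \mathbb{S}^{2n - 1}$ would satisfy $\bar{\phi}_t ( \bar{p} ) = r_\tau ( \bar{p} )$ together with $\bar{\phi}_t^* \bar{\alpha}_0 = \bar{\alpha}_0$ at $\bar{p}$; since $r_\tau$ descends to the identity on $L_k^{2n - 1} ( \underline{w} )$ by the very definition of $T_{\underline{w}}$, projecting these identities would exhibit the image of $\bar{p}$ in the lens space as a discriminant point of $\phi_t = \phi_t \circ \phi_0^{-1}$, contradicting embeddedness. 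I expect this to be the only non-routine step, and even there the argument is a short unravelling of definitions.

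With this in hand, $f$ is a continuous map on $[0, 1]$ vanishing at $0$ and avoiding $T_{\underline{w}} \cdot \mathbb{Z}$ for all $t \in (0, 1]$, so by connectedness its restriction to $(0, 1]$ lies in a single open interval of the form $( m T_{\underline{w}}, (m + 1) T_{\underline{w}} )$, and continuity at the origin forces $m \in \{ -1, 0 \}$. I conclude $c_0 ( \widetilde{\phi} ) = f(1) \in ( - T_{\underline{w}}, T_{\underline{w}} )$, which in particular yields $c_0 ( \widetilde{\phi} ) < T_{\underline{w}}$ as required.
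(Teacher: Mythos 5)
Your proof is correct and follows essentially the same route as the paper: you use the path $t \mapsto \widetilde{\phi}_t$, normalization, $\mathcal{C}^1$-continuity of $c_0$, and spectrality to rule out $c_0(\widetilde{\phi}) \geq T_{\underline{w}}$ via the fact that a translated point of $\bar{\phi}_t$ with translation in $T_{\underline{w}}\cdot\mathbb{Z}$ projects to a discriminant point of $\phi_t$, contradicting embeddedness. The only cosmetic difference is that you frame the conclusion via connectedness of $f((0,1])$ in $\mathbb{R}\smallsetminus T_{\underline{w}}\cdot\mathbb{Z}$ rather than arguing by contradiction with the intermediate value theorem, and you spell out the projection-to-discriminant-point step that the paper leaves implicit.
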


\begin{proof}
If $\widetilde{\phi} \in \mathcal{D}$
then $\widetilde{\phi}$ can be represented
by a contact isotopy $\{\phi_t \}_{t \in [0, 1]}$
such that $\phi_t$ does not have discriminant points
for all $t \in (0, 1]$.
Suppose by contradiction
that $c_0 (\widetilde{\phi}) \geq T_{\underline{w}}$,
and for $s \in [0, 1]$ let $\widetilde{\phi}_s = [ \{\phi_{st}\}_{t \in [0, 1]} ]$.
Since $c_0 (\widetilde{\phi}_1) = c_0 (\widetilde{\phi}) \geq T_{\underline{w}}$
and, by \autoref{theorem: main} (\ref{normalization}),
$c_0 (\widetilde{\phi}_0) = c_0 (\widetilde{\id}) = 0$,
by continuity of $c_0$
(\autoref{theorem: main} (\ref{continuity}))
there is a value of $s$ in $(0, 1]$
such that $c_0 (\widetilde{\phi}_s) = T_{\underline{w}}$.
But then,
by spectrality (\autoref{theorem: main} (\ref{spectrality})),
$T_{\underline{w}}$ belongs to $\bar{\mathcal{A}} (\widetilde{\phi}_s)$.
This means that $\phi_s$ has discriminant points,
which is a contradiction.
\end{proof}

We can now prove that for every real number $T$
the Reeb flow $\{r_{Tt}\}_{t \in [0, 1]}$ of the standard contact form $\alpha_0$
on a lens space of the form $L_k^{2n-1} (w, \cdots, w)$
is a geodesic for the discriminant norm.
We have seen above that $\{r_{Tt}\}_{t \in [0, 1]}$
has discriminant length $\left\lfloor \frac{k}{2\pi} \, T \right\rfloor + 1$.
In order to prove that it is a geodesic
we thus have to show that
\begin{equation}\label{equation: geodesic}
\nu_{\dis} (\widetilde{r_T}) \geq \left\lfloor \frac{k}{2\pi} \, T \right\rfloor + 1 \,.
\end{equation}
Let $\nu_{\dis} (\widetilde{r_T}) = N$,
and write $\widetilde{r_T} = \prod_{j = 1}^N \widetilde{\phi_j}$
with $\widetilde{\phi_j} \in \mathcal{D}$ for all $j$.
By \autoref{theorem: main} (\ref{composition with the Reeb flow main}),
(\ref{triangle inequality})
and Lemma \ref{lemma: D},
and since $T_{\underline{w}} = \frac{2\pi}{k}$ for $\underline{w} = (w, \cdots, w)$,
we then have
$$
T = c_0 (\widetilde{r_T})
\leq c_0 (\widetilde{\phi_1})
+ \sum_{j = 2}^N \left\lceil c_0 (\widetilde{\phi_j}) \right\rceil_{\frac{2\pi}{k}}
< N \, \frac{2\pi}{k} \,.
$$
This implies that
$\nu_{\dis} (\widetilde{r_T}) \geq \left\lfloor \frac{k}{2\pi} \, T \right\rfloor + 1$,
as we wanted.

We now show that the standard Reeb flow
$\{r_{Tt}\}_{t \in [0, 1]}$ on a lens space of the form $L_k^{2n-1} (w, \cdots, w)$
is a geodesic with respect to the oscillation norm.
Recall from \cite{CS} that the oscillation pseudonorm $\nu_{\osc}$
on the universal cover $\widetilde{\Cont_0} (M, \xi)$
of the identity component of the contactomorphism group
of a closed contact manifold $(M, \xi)$
is defined as follows.
Let $\mathcal{D}_+$ and $\mathcal{D}_-$ be the sets
of elements of $\widetilde{\Cont_0} (M, \xi)$
that can be represented respectively by an embedded
non-negative or non-positive contact isotopy.
It is proved in \cite{CS}
that every element $\widetilde{\phi}$
of $\widetilde{\Cont_0} (M, \xi)$
can be written as $\widetilde{\phi} = \prod_{j = 1}^N \widetilde{\phi_j}$
with $\widetilde{\phi_j} \in \mathcal{D}_+$ or $\widetilde{\phi_j} \in \mathcal{D}_-$ for every $j$.
We denote by $\nu_+ (\widetilde{\phi})$ and $\nu_- (\widetilde{\phi})$
respectively the minimal number of elements of $\mathcal{D}_+$
and minus the minimal number of elements of $\mathcal{D}_-$
in such a decomposition.
The oscillation pseudonorm is then defined by
$$
\nu_{\osc} (\widetilde{\phi}) = \nu_+ (\widetilde{\phi}) - \nu_- (\widetilde{\phi})
$$
for $\widetilde{\phi} \neq \widetilde{\id}$,
and $\nu_{\osc} (\widetilde{\id}) = 0$.
By \cite[Proposition 3.2]{CS},
the oscillation pseudonorm on $\widetilde{\Cont_0} (M, \xi)$
is non-degenerate if and only if $(M, \xi)$ is orderable;
it is thus a norm for lens spaces.
Recall also from \cite{PA}
that the oscillation length of a contact isotopy $\{\phi_t\}_{t \in [0, 1]}$
is the sum of $\mathcal{L}_+ \big(\{\phi_t\}_{t \in [0, 1]}\big)$
and $\mathcal{L}_- \big(\{\phi_t\}_{t \in [0, 1]}\big)$,
where $\mathcal{L}_+ \big(\{\phi_t\}_{t \in [0, 1]}\big)$
is the minimal $N_+$ for which there is $N \geq N_+$
and a decomposition
$0 = t_0 < \cdots < t_N = 1$
with each $\{ \phi_t \}_{t \in [t_{j}, t_{j + 1}]}$ embedded
and non-negative or non-positive
and exactly $N_+$ of them non-negative,
and $\mathcal{L}_- \big(\{\phi_t\}_{t \in [0, 1]}\big)$
is the minimal $N_-$ for which there is $N \geq N_-$
and a decomposition
$0 = t_0 < \cdots < t_N = 1$
with each $\{ \phi_t \}_{t \in [t_{j}, t_{j + 1}]}$ embedded
and non-negative or non-positive
and exactly $N_-$ of them non-positive.

Consider now as above the class $\widetilde{r_T} = [ \{ r_{Tt} \}_{t \in [0, 1]} ]$
of the Reeb flow of $\alpha_0$
on a lens space $\big( L_k^{2n-1} (\underline{w}), \xi_0 \big)$.
Similarly as before,
the oscillation length of $\{ r_{Tt} \}_{t \in [0, 1]}$
is $\left\lfloor \frac{k}{2\pi} \, T \right\rfloor + 1$.
The decomposition \eqref{equation: decomposition}
shows moreover that $\nu_- (\widetilde{r_T}) = 0$,
and thus $\nu_{\osc} (\widetilde{r_T}) = \nu_+ (\widetilde{r_T})$.
In order to show that the Reeb flow $\{ r_{Tt} \}_{t \in [0, 1]}$
of $\alpha_0$ on a lens space of the form $L_k^{2n-1} (w, \cdots, w)$
is a geodesic with respect to the oscillation norm
we thus have to show that 
\[
\nu_+ (\widetilde{r_T}) \geq \left\lfloor \frac{k}{2\pi} \, T \right\rfloor + 1 \,.
\]
Let $\nu_+ (\widetilde{r_T}) = N_+$,
and write $\widetilde{r_T} = \prod\limits_{j = 1}^N \widetilde{\phi}_j$
with $\widetilde{\phi}_j \in \mathcal{D}_\pm$ for all $j$
and with exactly $N_+$ of the $\widetilde{\phi_j}$ in $\mathcal{D}_+$.
Denote such elements by
$\widetilde{\phi_{\sigma(1)}}$, $\cdots$, $\widetilde{\phi_{\sigma(N_+)}}$.
Then $\widetilde{r_T} \leq \prod\limits_{j = 1}^{N_+} \widetilde{\phi}_{\sigma(j)}$,
and so by \autoref{theorem: main} (\ref{composition with the Reeb flow main}),
(\ref{monotonicity}), (\ref{triangle inequality})
and Lemma \ref{lemma: D},
and since $T_{\underline{w}} = \frac{2\pi}{k}$ for $\underline{w} = (w, \cdots, w)$,
we have
\[
T = c_0 (\widetilde{r_T})
\leq c_0 \Big( \prod\limits_{j = 1}^{N_+} \widetilde{\phi}_{\sigma(j)} \Big)
\leq c_0 (\widetilde{\phi}_{\sigma(1)})
+ \sum _{j = 2}^{N_+} \left\lceil c_0 (\widetilde{\phi}_{\sigma(j)})\right\rceil_{\frac{2\pi}{k}}
< \frac{2\pi}{k} \cdot N_+  \,.
\]
This implies that
$\nu_+ (\widetilde{r_T}) \geq \left\lfloor \frac{k}{2\pi} \, T \right\rfloor + 1$,
as we wanted.

\begin{rmk}\label{remark: general lens spaces}
For a general lens space $\big( L_k^{2n-1} (\underline{w}), \xi_0 \big)$
the above discussion implies that
the discriminant and oscillation norms
of $\widetilde{r_T} = [ \{ r_{Tt} \}_{t \in [0, 1]} ]$
are greater or equal than $\left\lfloor \frac{T}{T_{\underline{w}}} \right\rfloor + 1$,
while the discriminant and oscillation lengths of $\{ r_{Tt} \}_{t \in [0, 1]}$
are equal to $\left\lfloor \frac{k}{2\pi} \, T \right\rfloor + 1$.
We do not know thus
if the standard Reeb flow on general lens spaces
is a geodesic for the discriminant and oscillation norms.
The gap for general weights between the minimal period $\frac{2 \pi}{k}$ of a closed Reeb orbit
and the period $T_{\underline{w}}$ of the Reeb flow
seems to suggest that it might be possible to shorten the Reeb flow.
It would be interesting to investigate if this is indeed the case,
or if other methods could be used to prove
that the Reeb flow is still a geodesic.
\end{rmk}

%%%%%%%%%%%%%%%%%%%%%%%%%%%%%%%%%%%%%%%%%%%%%%%%%

\section{A spectral pseudonorm}\label{section: conjugation invariant norms}

Let $c_- = c_{-2n+1}$ and $c_+ = c_0$,
and define
$\nu: \widetilde{\Cont_0} \big( L_k^{2n-1} (\underline{w}), \xi_0 \big) \rightarrow T_{\underline{w}} \cdot \mathbb{Z}$
by
$$
\nu (\widetilde{\phi})
= \max \left\{\, \left\lceil c_+ (\widetilde{\phi}) \right\rceil_{T_{\underline{w}}} \,,\,
-  \left\lfloor c_- (\widetilde{\phi}) \right\rfloor_{T_{\underline{w}}} \,\right\} \,.
$$
In this section we prove that $\nu$ is a pseudonorm
satisfying the properties
stated in \autoref{corollary: word norms}.

Recall that a pseudonorm $\nu$ on a group $G$
is said to be stably unbounded
if there is an element $\sigma$ of $G$
such that $\lim_{m \to \infty} \frac{\nu(\sigma^m)}{m}\neq 0$,
and is said to be compatible with a bi-invariant partial order $\leq$
if $\id \leq \sigma_1 \leq \sigma_2$
implies $\nu (\sigma_1) \leq \nu (\sigma_2)$.

\begin{prop}
The map $\nu: \widetilde{\Cont_0} \big( L_k^{2n-1} (\underline{w}), \xi_0 \big) \rightarrow T_{\underline{w}} \cdot \mathbb{Z}$
is a stably unbounded conjugation invariant pseudonorm
compatible with the partial order $\leq$.
\end{prop}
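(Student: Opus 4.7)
The plan is to translate every property of $\nu$ into a statement about $c_+=c_0$ and $c_-=c_{-2n+1}$, and then to verify each using the corresponding item of \autoref{theorem: main}. The unifying trick is that Poincar\'e duality (property \ref{PD}) converts $-\lfloor c_-(\widetilde{\phi})\rfloor_{T_{\underline{w}}}$ into $\lceil c_+(\widetilde{\phi}^{-1})\rceil_{T_{\underline{w}}}$, so every statement involving a $T_{\underline{w}}$-floor can be rewritten as one involving a $T_{\underline{w}}$-ceiling of the inverse. Together with the fact that $j=0$ is even (so that the triangle inequality and conjugation invariance of \autoref{theorem: main} apply to $c_+$), this reduces the four pseudonorm axioms plus conjugation invariance to routine checks on $c_+$. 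The only step I expect to be mildly delicate is verifying non-negativity and order compatibility, since they require comparing the two summands in the definition of $\nu$.

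For the pseudonorm axioms, I would argue as follows. Non-negativity uses that $c_-\leq c_+$: if $c_+(\widetilde{\phi})\geq 0$ then $\lceil c_+(\widetilde{\phi})\rceil_{T_{\underline{w}}}\geq 0$, otherwise $c_-(\widetilde{\phi})<0$ and $-\lfloor c_-(\widetilde{\phi})\rfloor_{T_{\underline{w}}}>0$. The identity $\nu(\widetilde{\id})=0$ is immediate from the table in the proof of the Normalization property. Symmetry $\nu(\widetilde{\phi}^{-1})=\nu(\widetilde{\phi})$ is a direct consequence of Poincar\'e duality, which exchanges the two entries in the max. For the triangle inequality, the stronger form of \autoref{theorem: main} (\ref{triangle inequality}) with $j=l=0$ gives
\[
\lceil c_+(\widetilde{\phi}\cdot\widetilde{\psi})\rceil_{T_{\underline{w}}}\leq \lceil c_+(\widetilde{\phi})\rceil_{T_{\underline{w}}}+\lceil c_+(\widetilde{\psi})\rceil_{T_{\underline{w}}};
\]
applying Poincar\'e duality to $\widetilde{\phi}\cdot\widetilde{\psi}$ rewrites $-\lfloor c_-(\widetilde{\phi}\cdot\widetilde{\psi})\rfloor_{T_{\underline{w}}}$ as $\lceil c_+(\widetilde{\psi}^{-1}\cdot\widetilde{\phi}^{-1})\rceil_{T_{\underline{w}}}$, and the same triangle inequality together with one more application of Poincar\'e duality yields the companion bound by $-\lfloor c_-(\widetilde{\phi})\rfloor_{T_{\underline{w}}}-\lfloor c_-(\widetilde{\psi})\rfloor_{T_{\underline{w}}}$. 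Both bounds are $\leq \nu(\widetilde{\phi})+\nu(\widetilde{\psi})$, so $\nu$ is subadditive.

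For conjugation invariance, \autoref{theorem: main} (\ref{conjugaison invariance}) gives the identity $\lceil c_+(\widetilde{\psi}\cdot\widetilde{\phi}\cdot\widetilde{\psi}^{-1})\rceil_{T_{\underline{w}}}=\lceil c_+(\widetilde{\phi})\rceil_{T_{\underline{w}}}$; Poincar\'e duality applied to $(\widetilde{\psi}\cdot\widetilde{\phi}\cdot\widetilde{\psi}^{-1})^{-1}=\widetilde{\psi}\cdot\widetilde{\phi}^{-1}\cdot\widetilde{\psi}^{-1}$ then gives the analogue for $-\lfloor c_-\rfloor_{T_{\underline{w}}}$. Taking the maximum yields invariance of $\nu$. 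For compatibility with the partial order, $\widetilde{\id}\leq\widetilde{\phi}_1\leq\widetilde{\phi}_2$ combined with Normalization and Monotonicity gives $c_\pm(\widetilde{\phi}_i)\geq 0$, which forces $-\lfloor c_-(\widetilde{\phi}_i)\rfloor_{T_{\underline{w}}}\leq 0\leq\lceil c_+(\widetilde{\phi}_i)\rceil_{T_{\underline{w}}}$, so $\nu(\widetilde{\phi}_i)=\lceil c_+(\widetilde{\phi}_i)\rceil_{T_{\underline{w}}}$, and the monotonicity of $c_+$ finishes the argument.

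Stable unboundedness is witnessed by $\widetilde{\sigma}=\widetilde{r_{2\pi}}$. Using Composition with the Reeb flow and Normalization, $c_\pm(\widetilde{\sigma}^m)=c_\pm(\widetilde{r_{2\pi m}})=2\pi m$; since $T_{\underline{w}}$ divides $2\pi$ (because $r_{2\pi}$ lifts to the identity on $\mathbb{S}^{2n-1}$), $2\pi m\in T_{\underline{w}}\cdot\mathbb{Z}$ and $\nu(\widetilde{\sigma}^m)=2\pi|m|$, so $\nu(\widetilde{\sigma}^m)/m\to 2\pi\neq 0$.
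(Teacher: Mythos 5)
Your proposal is correct and follows essentially the same route as the paper: non-negativity from $c_-\leq c_+$, symmetry and the floor/ceiling translations from Poincar\'e duality, the triangle inequality from \autoref{theorem: main} (\ref{triangle inequality}) with $j=l=0$ combined with Poincar\'e duality, conjugation invariance from (\ref{conjugaison invariance}) and (\ref{PD}), and order compatibility from monotonicity. The only cosmetic difference is that you witness stable unboundedness with $\widetilde{r_{2\pi}}$ rather than the paper's $\widetilde{r_{T_{\underline{w}}}}$; both work since $T_{\underline{w}}$ divides $2\pi$.
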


\begin{proof}
We first show that for every $\widetilde{\phi}$ we have $\nu (\widetilde{\phi}) \geq 0$.
Suppose by contradiction that $\nu (\widetilde{\phi}) < 0$.
Then $\left\lceil c_+ (\widetilde{\phi}) \right\rceil_{T_{\underline{w}}} < 0$,
thus $c_+ (\widetilde{\phi}) < 0$,
and $-  \left\lfloor c_- (\widetilde{\phi}) \right\rfloor_{T_{\underline{w}}} < 0$,
thus $c_- (\widetilde{\phi}) > 0$.
But this contradicts the fact that,
since the sequence $c_j$ is non-decreasing,
$c_- (\widetilde{\phi}) \leq c_+ (\widetilde{\phi})$.
The triangle inequality
$\nu (\widetilde{\phi} \cdot \widetilde{\psi})
\leq \nu (\widetilde{\phi}) + \nu (\widetilde{\psi})$
follows from 
\autoref{theorem: main} (\ref{triangle inequality}) and (\ref{PD}),
and symmetry $\nu (\widetilde{\phi}) = \nu (\widetilde{\phi}^{-1})$
from \autoref{theorem: main} (\ref{PD}).
This shows that $\nu$ is a pseudonorm.
Invariance by conjugation follows from
\autoref{theorem: main} (\ref{conjugaison invariance}) and (\ref{PD}).
The pseudonorm $\nu$ is stably unbounded,
indeed  \autoref{theorem: main}
(\ref{composition with the Reeb flow main})
implies that
$$
\nu \Big( {\widetilde{r_{T_{\underline{w}}}} }^m \Big)
= \nu \big(\widetilde{r_{m T_{\underline{w}}}} \big) = m \, T_{\underline{w}}
$$
for every positive integer $m$,
thus posing $\sigma = \widetilde{r_{T_{\underline{w}}}}$
we have $\lim_{m \to \infty} \frac{ \nu(\sigma^m) }{m} = T_{\underline{w}} \neq 0$.
Finally, the fact that $\nu$ is compatible with the partial order $\leq$
follows from \autoref{theorem: main} (\ref{monotonicity}) and (\ref{PD}).
\end{proof}

It would be interesting to know
if $\nu$ is equivalent to the oscillation norm $\nu_{\osc}$.
In this direction,
we prove the following inequality.

\begin{prop}\label{proposition: comparison}
For every element $\widetilde{\phi}$ of $\widetilde{\Cont_0} \big( L_k^{2n-1} (\underline{w}), \xi_0 \big)$
we have
$$
\nu (\widetilde{\phi}) \leq T_{\underline{w}} \cdot \nu_{\osc} (\widetilde{\phi}) \,.
$$
\end{prop}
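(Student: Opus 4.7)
The plan is to split the target inequality into the two separate bounds
$$
\lceil c_+ (\widetilde{\phi})\rceil_{T_{\underline{w}}} \;\leq\; T_{\underline{w}}\cdot\nu_{\osc}(\widetilde{\phi})
\quad\text{and}\quad
-\lfloor c_- (\widetilde{\phi})\rfloor_{T_{\underline{w}}} \;\leq\; T_{\underline{w}}\cdot\nu_{\osc}(\widetilde{\phi}),
$$
handle the first directly, and reduce the second to the first via Poincaré duality.

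For the first bound I would pick a decomposition $\widetilde{\phi}=\prod_{j=1}^N \widetilde{\phi}_j$, with every $\widetilde{\phi}_j\in\mathcal{D}_+\cup\mathcal{D}_-$, that realizes $\nu_+(\widetilde{\phi})=:N_+$. Since every factor in $\mathcal{D}_-$ satisfies $\widetilde{\phi}_j\leq\widetilde{\id}$, bi-invariance of $\leq$ lets me delete them one at a time, producing $\widetilde{\phi}\leq \prod_{j=1}^{N_+}\widetilde{\phi}_{\sigma(j)}$, where the remaining factors are the $\mathcal{D}_+$-factors in their original order. Monotonicity of $c_+$ and iterated use of the "in particular" clause of the triangle inequality in \autoref{theorem: main}~(\ref{triangle inequality}) --- which is always applicable here since the involved index is $j=l=0$, which is even --- give
$$
\lceil c_+(\widetilde{\phi})\rceil_{T_{\underline{w}}} \;\leq\; \sum_{j=1}^{N_+}\lceil c_+(\widetilde{\phi}_{\sigma(j)})\rceil_{T_{\underline{w}}}.
$$
Because $\mathcal{D}_+\subset\mathcal{D}$, \autoref{lemma: D} yields $c_+(\widetilde{\phi}_{\sigma(j)})<T_{\underline{w}}$, so each ceiling on the right is at most $T_{\underline{w}}$. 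Summing and using $\nu_+\leq\nu_{\osc}$ (which holds since $\nu_-\leq 0$) concludes the first bound.

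For the second bound, I would apply the Poincaré duality property \autoref{theorem: main}~(\ref{PD}) with $j=0$ to obtain $-\lfloor c_-(\widetilde{\phi})\rfloor_{T_{\underline{w}}} = \lceil c_+(\widetilde{\phi}^{-1})\rceil_{T_{\underline{w}}}$, and then invoke the first bound applied to $\widetilde{\phi}^{-1}$. Here the key observation is that inverting a decomposition $\widetilde{\phi}=\widetilde{\phi}_1\cdots\widetilde{\phi}_N$ gives $\widetilde{\phi}^{-1}=\widetilde{\phi}_N^{-1}\cdots\widetilde{\phi}_1^{-1}$, and time-reversal interchanges $\mathcal{D}_+$ with $\mathcal{D}_-$ since it preserves embeddedness and swaps non-negativity with non-positivity. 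Consequently $\nu_+(\widetilde{\phi}^{-1})=-\nu_-(\widetilde{\phi})\leq \nu_{\osc}(\widetilde{\phi})$, which closes the second bound and hence the proposition.

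The main thing to keep track of is the interplay between the sign asymmetry of the spectral selectors and the behaviour of the $\mathcal{D}_\pm$-decompositions under inversion; once one recognizes that Poincaré duality is the right tool to trade $c_-$ for $c_+$ of the inverse, the rest of the argument is a direct assembly of the properties stated in \autoref{theorem: main} together with \autoref{lemma: D}.
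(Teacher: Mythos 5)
Your proof is correct and follows essentially the same route as the paper's: realize a decomposition achieving $\nu_+$, delete the $\mathcal D_-$-factors by bi-invariance of $\leq$, then apply monotonicity, the triangle inequality, and \autoref{lemma: D} to bound the $c_+$-term, and trade $c_-$ for $c_+$ of the inverse via Poincar\'e duality. The paper handles the $c_-$-bound with a terse ``similarly''; your explicit observation that inversion swaps $\mathcal D_+$ and $\mathcal D_-$, so that $\nu_+(\widetilde{\phi}^{-1}) = -\nu_-(\widetilde{\phi})$, is exactly what that ``similarly'' elides.
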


\begin{proof}
Let $\nu_+ (\widetilde{\phi}) = N_+$,
and write $\widetilde{\phi} = \prod\limits_{j = 1}^N \widetilde{\phi}_j$
with all the $\widetilde{\phi}_j$ in $\mathcal{D}_+$ or $\mathcal{D}_-$
and exactly $N_+$ of them in $\mathcal{D}_+$.
Denote such elements by
$\widetilde{\phi_{\sigma(1)}}$, $\cdots$, $\widetilde{\phi_{\sigma(N_+)}}$.
Then $\widetilde{\phi} \leq \prod_{j = 1}^{N_+} \widetilde{\phi}_{\sigma(j)}$,
and thus by \autoref{theorem: main} (\ref{monotonicity}),
(\ref{triangle inequality})
and \autoref{lemma: D}
we have
$$
c_+ (\widetilde{\phi}) \leq c_+ \Big( \prod_{j = 1}^{N_+} \widetilde{\phi}_{\sigma(j)} \Big)
\leq \sum \limits_{j = 1}^{N_+} \left\lceil c_+ (\widetilde{\phi}_{\sigma (j)}) \right\rceil_{T_{\underline{w}}}
\leq T_{\underline{w}} \cdot N_+ \,.
$$
Similarly,
setting $\nu_- (\widetilde{\phi}) = - N_-$
we have $c_+ (\widetilde{\phi}^{-1}) \leq T_{\underline{w}} \cdot N_-$
and so, by \autoref{theorem: main} (\ref{PD}),
$$
- \left\lfloor c_- (\widetilde{\phi}) \right\rfloor_{T_{\underline{w}}}
= \left\lceil c_+ \left(\widetilde{\phi}^{-1}\right)\right\rceil_{T_{\underline{w}}}
\leq T_{\underline{w}} \cdot N_- \,.
$$
We deduce that 
\[
\nu (\widetilde{\phi})
\leq T_{\underline{w}} \, \max \Big\{ \nu_+ (\widetilde{\phi}) \,,\, - \nu_- (\widetilde{\phi}) \Big\}
\leq T_{\underline{w}} \big( \nu_+ (\widetilde{\phi}) - \nu_- (\widetilde{\phi}) \big)
= T_{\underline{w}} \cdot \nu_{\osc} (\widetilde{\phi}) \,.
\]
\end{proof}

We do not know whether the pseudonorm $\nu$ is non-degenerate,
i.e.\ whether $\nu (\widetilde{\phi}) = 0$
if and only if $\widetilde{\phi} = \widetilde{\id}$.
Indeed,
by the definition of $\nu$ we have that
$\nu (\widetilde{\phi}) = 0$
if and only if $c_+ (\widetilde{\phi}) = c_- (\widetilde{\phi}) = 0$,
which by \autoref{theorem: main} (\ref{non-degeneracy})
only implies that $\Pi (\widetilde{\phi})$ is the identity.
On the other hand,
the induced conjugation invariant pseudonorm on $\Cont_0 \big( L_k^{2n-1} (\underline{w}), \xi_0 \big)$,
i.e.\ the pseudonorm $\nu_{\ast}$ defined by
$$
\nu_{\ast} (\phi) = \inf \{\, \nu (\widetilde{\phi}) \;\lvert\; \Pi (\widetilde{\phi}) = \phi  \,\} \,,
$$
is non-degenerate, hence a norm.
However, this norm is bounded
(hence equivalent to the trivial norm,
since it is discrete),
as shown in the following proposition.

\begin{prop}\label{lemma: nu bounded}
For every $\phi \in \Cont_0 \big( L_k^{2n-1} (\underline{w}), \xi_0 \big)$
we have $\nu_{\ast} (\phi) \leq 2 \pi + T_{\underline{w}}$.
\end{prop}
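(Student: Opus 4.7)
The strategy is to use the freedom in the choice of a lift of $\phi$, together with the Reeb-flow covariance and periodicity properties of the selectors, to produce a lift $\widetilde{\phi}$ with $\nu(\widetilde{\phi}) \leq 2\pi + T_{\underline{w}}$. The key point is that $\Pi(\widetilde{r_{mT_{\underline{w}}}}) = r_{mT_{\underline{w}}} = \id$ for every $m \in \mathbb{Z}$, so translating an arbitrary lift $\widetilde{\phi}_0$ of $\phi$ by a power of the loop $\widetilde{r_{T_{\underline{w}}}}$ again produces a lift of $\phi$, while by property (\ref{composition with the Reeb flow main}) each such translation shifts both $c_+(\widetilde{\phi}_0)$ and $c_-(\widetilde{\phi}_0)$ by the same multiple of $T_{\underline{w}}$.

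Concretely, fix any lift $\widetilde{\phi}_0$ of $\phi$, set $m := -\lfloor c_+(\widetilde{\phi}_0)/T_{\underline{w}} \rfloor$, and let $\widetilde{\phi} := \widetilde{r_{mT_{\underline{w}}}} \cdot \widetilde{\phi}_0$. By the composition-with-Reeb-flow property (\ref{composition with the Reeb flow main}) of \autoref{theorem: main},
\[
c_\pm(\widetilde{\phi}) = c_\pm(\widetilde{\phi}_0) + mT_{\underline{w}},
\]
so by the choice of $m$ we have $c_+(\widetilde{\phi}) \in [0, T_{\underline{w}})$. Combining the fact that the sequence $\{c_j\}$ is non-decreasing with the periodicity property (\ref{periodicity}) then yields
\[
c_-(\widetilde{\phi}) = c_{-2n+1}(\widetilde{\phi}) \geq c_{-2n}(\widetilde{\phi}) = c_0(\widetilde{\phi}) - 2\pi = c_+(\widetilde{\phi}) - 2\pi \geq -2\pi.
\]

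Because the Reeb flow on $\mathbb{S}^{2n-1}$ is $2\pi$-periodic and descends to the Reeb flow of period $T_{\underline{w}}$ on $L_k^{2n-1}(\underline{w})$, the number $T_{\underline{w}}$ divides $2\pi$, so both $0$ and $-2\pi$ are integer multiples of $T_{\underline{w}}$. Consequently $\lceil c_+(\widetilde{\phi})\rceil_{T_{\underline{w}}} \leq T_{\underline{w}}$ and $-\lfloor c_-(\widetilde{\phi})\rfloor_{T_{\underline{w}}} \leq 2\pi$, whence
\[
\nu(\widetilde{\phi}) = \max\bigl\{\lceil c_+(\widetilde{\phi})\rceil_{T_{\underline{w}}},\, -\lfloor c_-(\widetilde{\phi})\rfloor_{T_{\underline{w}}}\bigr\} \leq \max\{T_{\underline{w}},\, 2\pi\} \leq 2\pi + T_{\underline{w}},
\]
and therefore $\nu_\ast(\phi) \leq \nu(\widetilde{\phi}) \leq 2\pi + T_{\underline{w}}$, as required. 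The argument is essentially formal, amounting to combining the Reeb-shift freedom on the fibre $\Pi^{-1}(\phi)$ with periodicity; I do not anticipate any genuine obstacle beyond identifying the correct list of properties of the selectors to be invoked.
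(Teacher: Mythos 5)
Your proof is correct and rests on the same underlying idea as the paper's: adjust the lift of $\phi$ by the Reeb loop $\widetilde{r_{mT_{\underline{w}}}} \in \Pi^{-1}(\id)$, then use the periodicity and monotonicity of the sequence $\{c_j\}$ to bound $c_-$ in terms of $c_+$. The paper organizes this slightly differently, first establishing the inequality $\nu_\ast\big(\Pi(\widetilde{\phi})\big) \leq \big\lceil c_+(\widetilde{\phi})\big\rceil_{T_{\underline{w}}} - \big\lfloor c_-(\widetilde{\phi})\big\rfloor_{T_{\underline{w}}}$ for an arbitrary lift $\widetilde{\phi}$ (by exhibiting a shift $N$ under which $\nu(\widetilde{r_{-N}}\cdot\widetilde{\phi})$ equals that difference), and then bounding the right-hand side by $2\pi + T_{\underline{w}}$ via $c_+ \leq c_- + 2\pi$. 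By instead fixing the lift so that $c_+(\widetilde{\phi}) \in [0,T_{\underline{w}})$ and exploiting that $T_{\underline{w}}$ divides $2\pi$, your argument actually gives the marginally sharper bound $\nu_\ast(\phi) \leq \max\{T_{\underline{w}}, 2\pi\} = 2\pi$, which of course implies the stated $2\pi + T_{\underline{w}}$; you might as well record the stronger conclusion.
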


\begin{proof}
We show that
\begin{equation}\label{equation: in proof nu bounded}
\nu_{\ast} \big( \Pi (\widetilde{\phi}) \big)
\leq  \left\lceil c_+ (\widetilde{\phi}) \right\rceil_{T_{\underline{w}}}
-  \left\lfloor c_- (\widetilde{\phi}) \right\rfloor_{T_{\underline{w}}}
\leq 2 \pi + T_{\underline{w}}
\end{equation}
for every $\widetilde{\phi} \in \widetilde{\Cont_0} \big( L_k^{2n-1} (\underline{w}), \xi_0 \big)$.
Using periodicity of the spectral selectors
(\autoref{theorem: main} (\ref{periodicity}))
and the fact that the sequence of spectral selectors $c_j$ is non-decreasing
we have $c_+ (\widetilde{\phi}) \leq c_- (\widetilde{\phi}) + 2\pi$,
which implies the second inequality
in \eqref{equation: in proof nu bounded}.
For the first inequality,
it is enough to find $N \in T_{\underline{w}} \cdot \mathbb{Z}$
such that $\nu (\widetilde{r_{- N}} \cdot \widetilde{\phi})
= \left\lceil c_+ (\widetilde{\phi}) \right\rceil_{T_{\underline{w}}}
-  \left\lfloor c_- (\widetilde{\phi}) \right\rfloor_{T_{\underline{w}}}$.
Suppose first that
 $\nu (\widetilde{\phi}) =  \left\lceil c_+ (\widetilde{\phi}) \right\rceil_{T_{\underline{w}}}$,
 and pose $N =  \left\lceil c_+ (\widetilde{\phi}) \right\rceil_{T_{\underline{w}}}$.
By \autoref{theorem: main} (\ref{composition with the Reeb flow main})
we then have
$$
\nu (\widetilde{r_{- N}} \cdot \widetilde{\phi})
= \max \left\{\, \left\lceil c_+ (\widetilde{r_{- N}} \cdot\widetilde{\phi}) \right\rceil_{T_{\underline{w}}} \,,\,
-  \left\lfloor c_- (\widetilde{r_{- N}} \cdot\widetilde{\phi}) \right\rfloor_{T_{\underline{w}}} \,\right\}
$$
$$
= \max \left\{\, 0 \,,\, \left\lceil c_+ (\widetilde{\phi}) \right\rceil_{T_{\underline{w}}}
- \left\lfloor c_- (\widetilde{\phi}) \right\rfloor_{T_{\underline{w}}} \,\right\}
= \left\lceil c_+ (\widetilde{\phi}) \right\rceil_{T_{\underline{w}}}
-  \left\lfloor c_- (\widetilde{\phi}) \right\rfloor_{T_{\underline{w}}} \,.
$$
Similarly,
if $\nu (\widetilde{\phi}) =  - \left\lfloor c_- (\widetilde{\phi}) \right\rfloor_{T_{\underline{w}}}$
then, posing $N = \left\lfloor c_- (\widetilde{\phi}) \right\rfloor_{T_{\underline{w}}}$
we have
$$
\nu (\widetilde{r_{- N}} \cdot \widetilde{\phi})
= \left\lceil c_+ (\widetilde{\phi}) \right\rceil_{T_{\underline{w}}}
-  \left\lfloor c_- (\widetilde{\phi}) \right\rfloor_{T_{\underline{w}}} \,.
$$
\end{proof}

\begin{rmk}
It follows from \cite[Corollary 4.12]{AA} that
on the universal cover of the identity component
of the contactomorphism group
of the unit cotangent bundle of the torus $\mathbb{T}^n$
for $n \geq 2$
the difference of the invariants $c_+$ and $c_-$
defined in \cite{AA}
is unbounded.
This difference with respect to \eqref{equation: in proof nu bounded}
might be related to the fact that
the identity component
of the contactomorphism group
of the unit cotangent bundle of the torus
does not contain positive loops.
It would be interesting to investigate
if on the other hand the difference of the invariants $c_+$ and $c_-$ of \cite{AA}
is bounded on $\widetilde{\Cont_0} \big( L_k^{2n-1} (\underline{w}), \xi_0 \big)$.
This would then imply as in \autoref{lemma: nu bounded}
that the induced norm on $\Cont_0 \big( L_k^{2n-1} (\underline{w}), \xi_0 \big)$
is bounded,
and therefore answer partially a question in \cite[Example 2.21]{FPR}.
\end{rmk}

\begin{rmk}
If $\nu : G \to \mathbb{R}_{\geq 0}$ is a pseudonorm on a group $G$
then, for any $c > 0$,
the map $\nu' : G \to \mathbb{R}_{\geq 0}$ defined by
\[
\nu' (g) := \left\{
\begin{array}{ll}
\max\{\nu(g), c \} & \mbox{ if } g \ne \id \\
0 & \mbox{ if } g = \id
\end{array}
\right.
\]
is a norm.
Moreover,
$\nu'$ is invariant by conjugation
if and only if so is $\nu$.
This trick
(which is similar to one used in \cite{BIP})
can be applied to our pseudonorm $\nu$,
with $c = T_{\underline{w}}$,
to obtain a stably unbounded conjugation invariant norm $\nu'$
on $\widetilde{\Cont_0} \big( L_k^{2n-1} (\underline{w}), \xi_0 \big)$.
Since $\nu$ takes values in $T_{\underline{w}} \cdot \mathbb{Z}$,
if $\nu$ is already a norm then $\nu' \equiv \nu$.
\autoref{proposition: comparison}
holds also for $\nu'$,
indeed for any element $\widetilde{\phi} \ne \widetilde{\id}$
we have 
\[
\nu' (\widetilde{\phi})
= \max \left\{ \left\lceil c_+ (\widetilde{\phi}) \right\rceil_{T_{\underline{w}}} ,
- \left\lfloor c_- (\widetilde{\phi}) \right\rfloor_{T_{\underline{w}}}, T_{\underline{w}} \right\}
\leq T_{\underline{w}} \cdot \max \big\{ \nu_{\osc} (\widetilde{\phi}), 1 \big\}
= T_{\underline{w}} \cdot \nu_{\osc} (\widetilde{\phi}) \,.
\]
\end{rmk}

\bibliographystyle{amsplain}
\bibliography{biblio}

\end{document}